\renewcommand{\le}{\leqslant}
\renewcommand{\ge}{\geqslant}
\renewcommand{\leq}{\leqslant}
\renewcommand{\setminus}{\smallsetminus}
\newcommand{\ud}[0]{\,\mathrm{d}}
\newcommand{\abs}[1]{|#1|}
\newcommand{\Norm}[2]{\|#1\|_{#2}}
\newcommand{\supp}[0]{\operatorname{supp}}
\newcommand{\sign}[0]{\operatorname{sgn}}
\newcommand{\R}{\mathbb{R}}
\newcommand{\N}{\mathbb{N}}
\newcommand{\Z}{\mathbb{Z}}
\newcommand{\Lip}{\mathrm{Lip}}
\newcommand{\Taylor}{\operatorname{Taylor}}
\newcommand{\e}{\varepsilon}
\renewcommand{\d}{\delta}
\renewcommand{\subset}{\subseteq}
\newcommand{\eqdef}{\stackrel{\mathrm{def}}{=}}
\renewcommand{\div}{\operatorname{div}}
\newcommand{\n}{\{1,\ldots,n\}}
\newcommand{\1}{\mathbf 1}
\renewcommand{\supset}{\supseteq}
\renewcommand{\hat}{\widehat}
\renewcommand{\gamma}{\upgamma}
\renewcommand{\Pr}{\mathsf{Prob}}
\newcommand{\E}{\mathbb{E}}
\renewcommand{\sign}[0]{\operatorname{sign}}
\newcommand{\spn}{\mathrm{span}}
\newcommand{\f}{\phi}
\newcommand{\Proj}{\mathsf{Proj}}
\renewcommand{\emptyset}{\varnothing}
\renewcommand*{\dot}[1]{%
  \accentset{\mbox{\large\bfseries .}}{#1}}
\theoremstyle{plain}
\newtheorem{theorem}{Theorem}
\newtheorem{proposition}[theorem]{Proposition}
\newtheorem{corollary}[theorem]{Corollary}
\newtheorem{lemma}[theorem]{Lemma}
\newtheorem{conjecture}[theorem]{Conjecture}
\theoremstyle{definition}
\theoremstyle{remark}
\newtheorem{remark}[theorem]{Remark}
\newtheorem{question}[theorem]{Question}
\begin{document}

\title{Heat flow and quantitative differentiation}

\author[T. Hyt\"onen]{Tuomas Hyt\"onen}
\address{(T.H.) University of Helsinki, Department of Mathematics and Statistics, P.O.B.~68 (Gustaf H\"all\-str\"omin katu~2b), FI-00014 Helsinki, Finland}
\email{tuomas.hytonen@helsinki.fi}

\author[A.~Naor]{Assaf Naor}
\address{(A.N.) Princeton University,
Department of Mathematics,
Fine Hall, Washington Road,
Princeton, NJ 08544-1000, USA}
\email{naor@math.princeton.edu}

\date{\today}

\thanks{T.H. was supported by the ERC Starting Grant ``AnProb'' and the Academy of Finland, CoE in Analysis and Dynamics Research. A.~N. was supported by BSF grant
2010021, the Packard Foundation and the Simons Foundation.}
\keywords{Quantitative differentiation, uniform convexity, Littlewood--Paley theory, heat semigroup}
\subjclass[2010]{}


\maketitle
\setcounter{tocdepth}{2}
\vspace{-0.25in}
\begin{abstract}
For every Banach space $(Y,\|\cdot\|_Y)$ that admits an equivalent uniformly convex norm we prove that there exists $c=c(Y)\in (0,\infty)$ with the following property. Suppose that $n\in \N$ and that $X$ is an $n$-dimensional normed space with unit ball $B_X$. Then for every $1$-Lipschitz function $f:B_X\to Y$ and for every $\e\in (0,1/2]$ there exists a radius $r\ge \exp(-1/\e^{cn})$, a point $x\in B_X$ with $x+rB_X\subset B_X$, and an affine mapping $\Lambda:X\to Y$ such that $\|f(y)-\Lambda(y)\|_Y\le \e r$ for every $y\in x+rB_X$. This is an improved bound for a fundamental quantitative differentiation problem that was formulated by Bates, Johnson, Lindenstrauss, Preiss and Schechtman (1999), and consequently it yields a new proof of Bourgain's discretization theorem (1987) for uniformly convex targets. The strategy of our proof is inspired by Bourgain's original approach to the discretization problem, which takes the affine mapping $\Lambda$ to be the first order Taylor polynomial of a time-$t$ Poisson evolute of an extension of $f$ to all of $X$  and argues that, under appropriate assumptions on $f$, there must exist a time $t\in (0,\infty)$ at which $\Lambda$ is (quantitatively) invertible. However, in the present context we desire a more stringent conclusion, namely that $\Lambda$ well-approximates $f$ on a macroscopically large ball, in which case we show that for our argument to work one cannot use the Poisson semigroup. Nevertheless, our strategy does succeed with the Poisson semigroup replaced by the heat semigroup. As a crucial step of our proof, we establish a new uniformly convex-valued Littlewood--Paley--Stein $\mathcal{G}$-function inequality for the heat semigroup; influential work of Mart\'inez, Torrea and Xu~(2006) obtained such an inequality for subordinated Poisson semigroups but left the important case of the heat semigroup open. As a byproduct, our proof also yields a new and simple approach to the classical Dorronsoro theorem  (1985) even for real-valued functions.
\end{abstract}

\tableofcontents


\section{Introduction}

Denote the unit ball of a Banach space $(Y,\|\cdot\|_Y)$ by $B_Y\eqdef \{y\in Y:\ \|y\|_Y\le 1\}$. Recall that the norm $\|\cdot\|_Y$ is said to be uniformly convex if for every $\e\in (0,2]$ there exists $\d\in (0,1]$ such that
\begin{equation}\label{def:uniform convexity}
\forall\, x,y\in B_Y,\qquad \|x-y\|_Y\ge \e\implies \|x+y\|_Y\le 2(1-\d).
\end{equation}

Following Bates, Johnson, Lindenstrauss, Preiss and Schechtman~\cite{BJLPS}, given a pair of Banach spaces $(X,\|\cdot\|_X)$ and $(Y,\|\cdot\|_Y)$, the space $\Lip(X,Y)$ of Lipschitz functions from $X$ to $Y$ is said to have the {\em uniform approximation by affine property} if for every $\e\in (0,\infty)$ there exists $r\in (0,1)$ such that for every $1$-Lipschitz function $f:B_X\to Y$ there exists a radius $\rho\ge r$, a point $x\in X$ with $x+\rho B_X\subset B_X$, and an affine mapping $\Lambda:X\to Y$ such that $\|f(y)-\Lambda(y)\|_Y\le \e \rho$ for every $y\in x+\rho B_X$. Denote the supremum over those $r$ by $r^{X\to Y}(\e)$. The following theorem is due to~\cite{BJLPS}.

\begin{theorem}[Bates--Johnson--Lindenstrauss--Preiss--Schechtman]\label{thm:BJLPS quote}
Suppose that $(X,\|\cdot\|_X)$ and $(Y,\|\cdot\|_Y)$ are Banach spaces with $\dim(X)<\infty$. Then $\Lip(X,Y)$ has the uniform approximation by affine property {\bf if and only if} $Y$ admits an equivalent uniformly convex norm.
\end{theorem}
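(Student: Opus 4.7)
\emph{Plan.} The statement is a biconditional; the necessity direction is classical and goes via a bad-function construction based on super-reflexivity, while the sufficiency is the main quantitative content, which I would prove by a Bourgain-style Taylor-polynomial argument with the heat semigroup as the regularization.

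\emph{Necessity.} I would use that absence of an equivalent uniformly convex norm on $Y$ is equivalent, by the Enflo--Pisier renorming theorem, to $Y$ being non-super-reflexive, hence (James) $B_Y$ contains arbitrarily deep $\delta$-trees for some fixed $\delta>0$. A standard construction then assembles, from a sufficiently deep such tree, a 1-Lipschitz function $f:B_X\to Y$ that exhibits on every sub-ball $x+\rho B_X\subset B_X$ an oscillation of size $\gtrsim \delta\rho$ that no affine map can track within error $\eps\rho$ when $\eps<\delta/10$, yielding $r^{X\to Y}(\eps)=0$ and contradicting UAAP.

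\emph{Sufficiency, strategy.} By Pisier's theorem I may assume $Y$ has modulus of uniform convexity of power type $q\in[2,\infty)$. Given a 1-Lipschitz $f:B_X\to Y$, extend $f$ to a compactly supported 1-Lipschitz $\tilde f:X\to Y$ via McShane extension and bump multiplication. Endow $X$ with a Euclidean structure from John's ellipsoid of $B_X$ (at a $\sqrt{n}$-distortion cost that can be absorbed in the final constants), so that the standard heat semigroup $(H_t)_{t>0}$ and Laplacian $\Delta$ are well defined; set $u(x,t)\eqdef(H_t\tilde f)(x)$ and let $\Lambda_{x,t}$ denote the first-order Taylor polynomial of $u(\cdot,t)$ at $x$. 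For $y$ in a ball of radius $r$ about $x$,
\[
\|f(y)-\Lambda_{x,t}(y)\|_Y \leq \|\tilde f(y)-u(y,t)\|_Y+\|u(y,t)-\Lambda_{x,t}(y)\|_Y \leq C\sqrt{t}+Cr^2\sup_{[x,y]}\|\nabla^2 u(\cdot,t)\|_Y.
\]
Taking $r\sim\eps\sqrt{t}$ reduces the problem to locating a pair $(x,t)$ at which $t\|\nabla^2 u(x,t)\|_Y\lesssim \eps$.

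\emph{Key step and main obstacle.} To locate such a pair I would apply the $Y$-valued Littlewood--Paley--Stein inequality for the heat semigroup,
\[
\Norm{\Bigl(\int_0^\infty \|t\partial_t H_t\tilde f(x)\|_Y^q\,\frac{dt}{t}\Bigr)^{1/q}}{L^q(X)} \leq C(Y)\,\Norm{\tilde f}{L^q(X;Y)},
\]
combined with $\partial_t u=\tfrac12\Delta u$ to convert time derivatives into Hessians, and then run Markov's inequality and a pigeonhole over $\sim\log(1/r)$ dyadic scales of $t$ and over $\eps$-separated cells in $B_X$ to isolate one pair on which the Taylor remainder is small; matching parameters carefully should yield the target $r\geq\exp(-1/\eps^{cn})$. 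The main obstacle is precisely this $\mathcal G$-function inequality for the heat semigroup: Mart\'inez--Torrea--Xu proved the analogue only for the subordinated Poisson semigroup, whose sectorial generator $\sqrt{-\Delta}$ enables an $H^\infty$-calculus approach, while the heat semigroup requires a more direct argument. My plan would be to discretize time geometrically as $t_k=2^{-k}$, treat $(H_{t_k}\tilde f)_k$ as a reverse-martingale-like sequence, and apply the Pisier--Xu $q$-uniform-convexity inequality to this sequence, taking great care that the constant depends only on $Y$ and not on $n$---any $n$-dependence would immediately wreck the target exponential bound.
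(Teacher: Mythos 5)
The paper does not prove Theorem~\ref{thm:BJLPS quote}; it is quoted from~\cite{BJLPS} as background. Your sufficiency sketch is, in broad strokes, the paper's route to the stronger quantitative Theorem~\ref{thm:main intro}, but the step you yourself flag as the main obstacle is genuinely unresolved in your plan. The heat evolutes $(H_{t_k}\tilde f)_k$ are not a martingale or reverse martingale (heat operators are convolutions, not conditional expectations), so Pisier's $q$-uniform-convexity inequality cannot be applied to them as a ``reverse-martingale-like sequence.'' The missing ingredient is Rota's representation theorem: for a symmetric Markov operator $Q$, the even powers $Q^{2k}$ factor as $J^{-1}\circ E'\circ E_k\circ J$ with $(E_k)$ a genuine decreasing sequence of conditional expectations and $J$ an isometric embedding, so the semigroup statement becomes one about an honest reverse martingale. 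This self-adjointness-based dilation is exactly what the paper uses (Proposition~\ref{prop:Rota} and Lemma~\ref{lem:Rota}), and without it your discretization is an analogy, not an argument.

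Two secondary problems. Your displayed Taylor chain cannot close with the scaling $r\sim\eps\sqrt{t}$: the first error term $C\sqrt{t}$ is then of size $Cr/\eps$, which dominates the target $\eps r$; the smallness of $\|\tilde f-H_t\tilde f\|_Y$ at a well-chosen $(x,t)$ must itself come from the Carleson-measure/pigeonhole step rather than a pointwise $O(\sqrt t)$ estimate (the paper's parabolic scaling is $t\sim\gamma\rho^2$ with $\rho$ the ball radius, and both error terms are absorbed into the square function via Theorem~\ref{thm:LPspatial}). Your worry that the $\mathcal{G}$-function constant must be dimension-free is also misplaced: the paper's heat $\mathcal{G}$-function inequality (Theorem~\ref{thm:LPtemporal}) carries a $\sqrt n$ factor and this is harmless, since $\exp(-1/\eps^{cn})$ absorbs any polynomial in $n$. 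The necessity sketch via James trees is a reasonable plan for the qualitative direction, but at the level of detail you give there is nothing to check.
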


A function is differentiable if it admits arbitrarily good (after appropriate rescaling) affine approximations on infinitesimal balls. The uniform approximation by affine property was introduced as a way to make this phenomenon quantitative by requiring that the affine approximation occurs on a macroscopically large ball of a definite size that is independent of the specific  $1$-Lipschitz function. In addition to being a natural question in its own right, obtaining such quantitative information has geometric applications. In particular, Bates, Johnson, Lindenstrauss, Preiss and Schechtman introduced it in order to study~\cite{BJLPS}   nonlinear quotient mappings between Banach spaces. Here we obtain the following improved lower bound on the modulus $r^{X\to Y}(\e)$ in the context of Theorem~\ref{thm:BJLPS quote}.

\begin{theorem}\label{thm:main intro}
Suppose that $(Y,\|\cdot\|_Y)$  is a Banach space that admits an equivalent uniformly convex norm. Then there exists  $c=c(Y)\in (0,\infty)$ such that for every $n\in \N$, every $n$-dimensional normed space $(X,\|\cdot\|_X)$, and every $\e\in (0,1/2]$ we have $r^{X\to Y}(\e)\ge \exp(-1/\e^{cn})$.
\end{theorem}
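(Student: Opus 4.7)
Following the strategy hinted at in the abstract, I would first extend $f$ from $B_X$ to a Lipschitz function $\tilde f\colon X\to Y$ (e.g.\ by radial retraction onto $B_X$) and then smooth $\tilde f$ by the heat semigroup associated to an auxiliary Euclidean structure on $X$ (obtained from John's ellipsoid, at the price of a factor~$\sqrt n$). Writing $u(\cdot,t)=P_t\tilde f$, the candidate affine approximation at a base point $x_0$ and time $t$ is the first-order Taylor polynomial
\[
\Lambda_{x_0,t}(y)\eqdef u(x_0,t)+\langle \nabla u(x_0,t),\,y-x_0\rangle.
\]
On $x_0+rB_X$ the total error splits as a \emph{Taylor remainder} $u(y,t)-\Lambda_{x_0,t}(y)$, controlled via Gaussian second-derivative estimates for the heat kernel by $\lesssim_n r^2/\sqrt t$, plus a \emph{smoothing error} $f(y)-u(y,t)=-\int_0^t\partial_s u(y,s)\,\ud s$. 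The former is $\le \e r$ as soon as $r\lesssim_n \e\sqrt t$; the latter is the real obstacle.

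\noindent\textbf{The heat-semigroup $\mathcal G$-function and pigeonhole.} The smoothing error is treated by the new vector-valued Littlewood--Paley--Stein inequality announced in the abstract: for some $q=q(Y)\in[2,\infty)$ determined by the modulus of uniform convexity of $Y$,
\[
\BNorm{\left(\int_0^\infty\|t\partial_t u(\cdot,t)\|_Y^q\,\frac{\ud t}{t}\right)^{1/q}}{L^2(X)}\lesssim_Y \|\tilde f\|_{L^2(X;Y)}.
\]
Given this, the plan is to pigeonhole on a geometric sequence of scales $t_k=e^{-2k}$ with $k=1,\dots,N$ and $N\asymp 1/\e^{cn}$, together with, at each scale, a finite net of base points $x_0$ covering the interior of $B_X$ by translates of $r_kB_X$ for $r_k\eqdef c\e\sqrt{t_k}$. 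The Taylor remainder is automatically $\le \e r_k$ at these radii, while the $\mathcal G$-function estimate---combined with the fundamental theorem of calculus to relate $\int_0^{t_k}\partial_s u\,\ud s$ to the smoothing error on $x_0+r_kB_X$---controls the aggregate second moment of the localized smoothing errors by a dimensional constant. A Chebyshev/pigeonhole argument over the resulting (scale, center) pairs then selects at least one pair $(k,x_0)$ at which the localized smoothing error on $x_0+r_kB_X$ is at most $\e r_k$, and the coarsest available radius $r_N\approx \e e^{-N}$ matches the claimed lower bound $\exp(-1/\e^{cn})$.

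\noindent\textbf{Main obstacle.} The heart of the proof is thus the uniformly convex-valued $\mathcal G$-function inequality for the \emph{heat} semigroup. The Mart\'inez--Torrea--Xu result only covers semigroups of subordinated Poisson type, and one cannot simply switch to the Poisson semigroup in the present setting: the Poisson-kernel scaling at time $t$ is $t$ rather than $\sqrt t$, which destroys the compatibility between the Taylor bound and the smoothing-error bound on a macroscopic ball. Proving the heat-semigroup inequality directly, without the subordination shortcut, is the technical bottleneck; I would expect to attack it via a Pisier-type martingale characterization of uniform convexity (equivalent renorming with modulus of smoothness of power type), combined with sharp pointwise bounds on time derivatives of the Gaussian kernel. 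Once this inequality is in place, the dimensional pigeonhole sketched above delivers Theorem~\ref{thm:main intro}.
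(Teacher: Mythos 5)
Your plan captures the paper's main architecture — extend $f$ beyond $B_X$, smooth by the heat semigroup, take the first-order Taylor polynomial $\Taylor_{x_0}^1(H_tF)$, split the error into a Taylor remainder plus a smoothing error, control the latter with a new heat-semigroup $\mathcal{G}$-function inequality for uniformly convex targets (obtained in the paper via Rota's dilation theorem and Pisier's martingale cotype inequality), and pigeonhole over a geometric range of scales. This is essentially the route the paper takes, and you correctly identify why the Poisson semigroup fails and the heat semigroup succeeds.

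However, there is a genuine gap at the very end. A Chebyshev/pigeonhole argument on the Carleson-type estimate coming from the $\mathcal{G}$-function only produces a pair $(x_0, t_k)$ at which a normalized \emph{$L^q$ average} of the error over $x_0 + r_kB_X$ is small, whereas the definition of $r^{X\to Y}(\e)$ demands the \emph{pointwise} bound $\|f(y)-\Lambda(y)\|_Y \le \e r_k$ for every $y\in x_0+r_kB_X$. Averaging cannot yield a supremum bound by itself, and your sketch simply asserts that the pigeonhole selects a pair where the localized error ``is at most $\e r_k$'' without explaining this passage. The paper bridges it by a separate $L^q$-to-$L^\infty$ upgrade, which is also exactly where the exponent $n$ in $\e^{cn}$ originates: one first pigeonholes with a much smaller threshold $\delta=(\eta\e)^{1+n/q}$ (paying $\log(1/r)\asymp(Kn/\delta)^q$ scales), obtaining an $L^q$ bound $\lesssim \delta\rho$; then, because $f-\Lambda$ is $O(1)$-Lipschitz, if $\|f-\Lambda\|_Y$ exceeded $\e\rho$ somewhere it would exceed $\asymp\e\rho$ on a sub-ball of radius $\asymp\e\rho$ and force the $L^q$ average above $\asymp\e^{1+n/q}\rho$, a contradiction. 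This argument in turn requires knowing that $\Lambda$ is $O(1)$-Lipschitz on all of $X$ (not merely on $B_X$), which the paper establishes for sufficiently small heat-times in Lemma~\ref{lem:lip of heat} via Gaussian tail bounds — a constraint your outline omits. Without this upgrade your choice $N\asymp 1/\e^{cn}$ is unmotivated: a pigeonhole targeting $\e$ directly in $L^q$ would need only $N\asymp 1/\e^q$ scales, giving a bound of the wrong (weaker, dimension-independent) form. Two minor slips: the moment controlled by the $\mathcal{G}$-function is the $q$-th with $q$ equal to the martingale cotype of $Y$, not the second; and the renorming involved is of power-type modulus of \emph{convexity} (martingale cotype), not smoothness.
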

Theorem~\ref{thm:main intro} answers Question~8 of~\cite{HLN} positively. We defer the description of (and comparison to) previous related works to later in the Introduction, after additional notation is introduced so as to facilitate such a discussion; see Section~\ref{sec:previous} below. It suffices to summarize at the present juncture that the proof of Bates, Johnson, Lindenstrauss, Preiss and Schechtman in~\cite{BJLPS} did not yield any quantitative information on $r^{X\to Y}(\e)$. The first bound on this quantity, which is weaker as $n\to \infty$ than that of Theorem~\ref{thm:main intro},  was obtained by Li and and the second named author in~\cite{LiNaor:13}. A bound that is similar to that of Theorem~\ref{thm:main intro} (though weaker in terms of the implicit dependence on the geometry of $Y$) was obtained by Li and  both authors in~\cite{HLN} under an analytic assumption on $Y$ that is strictly more stringent than the requirement that it admits an equivalent uniformly convex norm (which, by Theorem~\ref{thm:BJLPS quote}, is the correct setting for quantitative differentiation). The best known~\cite{HLN} upper bound  on $r^{X\to Y}(\e)$ asserts that for every $p\in [2,\infty)$ there exists a uniformly convex Banach space $Y$ such that if $X$ is an $n$-dimensional Hilbert space then  $r^{X\to Y}(\e)\le \exp(-n(K/\e)^p)$ for every $\e\in (0,\e_0]$, with $K,\e_0>0$ being universal constants. The above estimates (both upper and lower) on $r^{X\to Y}(\e)$  are the best known even when $X$ and $Y$ are both Hilbert spaces and, say, $\e=1/4$.

Our proof of Theorem~\ref{thm:main intro} has conceptual significance that goes beyond the mere fact that it yields an asymptotically improved bound in the maximal possible generality. Firstly, Theorem~\ref{thm:main intro} expresses the best quantitative differentiation result that is obtainable by available approaches, relying on a definitive local approximation estimate of independent interest (see Section~\ref{sec:local approcximation intro} below) that we derive here as a crucial step towards Theorem~\ref{thm:main intro}. Briefly, it seems that the $L_p$ methods (and the corresponding Littlewood--Paley theory) that were used thus far have now reached their limit with  Theorem~\ref{thm:main intro}, and in order to obtain a better lower bound on  $r^{X\to Y}(\e)$ (if at all possible) one would need to work directly with $L_\infty$ estimates, which would likely require a markedly different strategy. Secondly, our proof of Theorem~\ref{thm:main intro} contains contributions to Littlewood--Paley theory that are of significance in their own right. We rely on a novel semigroup argument (yielding as a side-product a new approach to classical results in harmonic analysis even for scalar-valued functions), but it turns out that our strategy is sensitive to the choice of semigroup, despite the semigroup's purely auxiliary role towards the  geometric statement of Theorem~\ref{thm:main intro}. Specifically, our argument fails for the Poisson semigroup (even when $Y$ is a Hilbert space) but does work for the heat semigroup. As a key step, we desire a Littlewood--Paley--Stein estimate for the corresponding  $\mathcal{G}$-function for mappings that take values in uniformly convex Banach spaces. Such a theory has been developed for the Poisson semigroup initially by Xu~\cite{Xu98}, and in a definitive form in important work of Mart\'inez, Torrea and Xu~\cite{MTX}. The availability of~\cite{MTX} has already played a decisive role in purely geometric questions~\cite{LN14}, and it is therefore tempting to also try to use it in our context, but it turns out that obtaining the vector-valued  Littlewood--Paley--Stein inequality for the heat semigroup was left open in~\cite{MTX}. We remedy this by proving new Littlewood--Paley--Stein   $\mathcal{G}$-function estimates for the heat semigroup with values in uniformly convex targets, and using them to prove Theorem~\ref{thm:main intro}. The rest of the Introduction is devoted to a formal explanation of the above overview.

\subsection{Bourgain's strategy for the discretization problem}\label{sec:bourgain} Prior to stating the analytic results that we obtain here as steps towards the proof of Theorem~\ref{thm:main intro}, it would be beneficial  to first present a geometric question due to Bourgain~\cite{Bour:Lip}, known today as {\em Bourgain's discretization problem}, since it served both as  inspiration for our subsequent proofs, as well as one of the reasons for our desire to obtain a lower bound on the modulus $r^{X\to Y}(\e)$. The formal link between the uniform approximation by affine property and Bourgain's discretization problem was clarified in~\cite{LiNaor:13}, but the idea to use semigroup methods in the present context is new, motivated by an approach that Bourgain took within the proof of his discretization theorem in~\cite{Bour:Lip}. As an interesting ``twist," we shall show that a ``vanilla" adaptation of Bourgain's approach to our setting does not work, and in the process of overcoming this difficulty we shall obtain new results in vector-valued Littlewood--Paley theory.

The (bi-Lipschitz) distortion of a metric space $(M,d_M)$ in a metric space $(Z,d_Z)$ is denoted (as usual) by $c_Z(M)\in [1,\infty]$. Thus, the quantity $c_Z(M)$ is the infimum over those $D\in [1,\infty]$ for which there exists an embedding $\f:M\to Z$ and (a scaling factor) $s\in (0,\infty)$ such that $sd_M(x,y)\le d_Z(\f(x),\f(y))\le Dsd_M(x,y)$ for every $x,y\in M$. When $Z$ is a Hilbert space (of the same density character as $M$), $c_Z(M)$ is called the Euclidean distortion of $M$ and is denoted $c_2(M)$.

Fix $n\in \N$. Let $(X,\|\cdot\|_X)$ be an $n$-dimensional normed space and let $(Y,\|\cdot\|_Y)$ be an arbitrary infinite dimensional Banach space. Bourgain's discretization problem asks for a lower estimate on the largest possible $\d\in (0,1)$ such that for any $\d$-net $\mathcal{N}_\d\subset B_X$ of $B_X$ we have $c_Y(X)\le 2c_Y(\mathcal{N}_\d)$.  Thus, the question at hand is to find the coarsest possible discrete approximation of $B_X$ with the property that if it embeds into $Y$ with a certain distortion then the entire space $X$ also embeds into $Y$ with at most twice that distortion (the factor $2$ is an arbitrary choice; see~\cite{GNS} for a generalization).  Bourgain's discretization theorem~\cite{Bour:Lip,GNS} (see also Chapter~9 of the monograph~\cite{Ost13}) asserts that
\begin{equation}\label{eq:bourgain's bound}
\d\ge \exp\left(-c_Y(X)^{Kn}\right)\ge \exp\left(-n^{Kn}\right),
\end{equation}
where $K\in [1,\infty)$ is a universal constant. The second inequality in~\eqref{eq:bourgain's bound} holds true because we always have $c_Y(X)\le \sqrt{n}$ by John's theorem~\cite{Joh48} and Dvoretzky's theorem~\cite{Dvo60}.

The above discretization problem was introduced in~\cite{Bour:Lip} as an alternative (quantitative) approach to an important rigidity theorem of Ribe~\cite{Rib76}. Additional applications to embedding theory appear in~\cite{NS07,GNS,NS16}.  To date, the bound~\eqref{eq:bourgain's bound} remains the best known, even under the additional restriction that $Y$ is uniformly convex.  When $Y$ is uniformly convex, a different proof that $\d \ge \exp(-n^{Kn})$ for some $K=K(Y)\in [1,\infty)$  was obtained in~\cite{LiNaor:13} using the uniform approximation by affine property, and our Theorem~\ref{thm:main intro} yields the stronger estimate $\d \ge \exp(-c_Y(X)^{Kn})$ by~\cite[Remark~1.1]{LiNaor:13}.

 The proof of~\eqref{eq:bourgain's bound} in~\cite{Bour:Lip} starts with a bi-Lipschitz embedding $\f:\mathcal{N}_\delta\to Y$ and proceeds to construct an auxiliary mapping $f:X\to Y$. This is achieved through  {\em Bourgain's almost extension theorem}~\cite{Bour:Lip}, which is a nontrivial step but for the present purposes we do not need to recall the precise properties of $f$ other than to state that $f$ is Lipschitz, compactly supported, and that it well-approximates $\f$ on the net $\mathcal{N}_\d$. Having obtained a mapping $f$ that is defined on all of $X$, \cite{Bour:Lip} proceeds to examine the evolutes $\{P_tf\}_{t\in (0,\infty)}$ of $f$ under the Poisson semigroup $\{P_t\}_{t\in (0,\infty)}$, i.e.,
$$
\forall\, x\in \R^n,\qquad P_tf(x)\eqdef p_t*f(x)=\int_{\R^n} p_t(y)f(x-y)\ud y,
$$
where the Poisson kernel $p_t:\R^n\to [0,\infty)$ is given by
$$
\forall(x,t)\in \R^n\times (0,\infty),\qquad p_t(x)\eqdef \frac{\Gamma\left(\frac{n+1}{2}\right)t}{\left(\pi t^2+\pi|x|^2\right)^{\frac{n+1}{2}}}.
$$
Note that here we implicitly identified $X$ with $\R^n$, with $|\cdot|$ being the standard Euclidean norm on $\R^n$; this issue will become important later, as discussed in  Sections~\ref{sec:X geom}  and~\ref{sec:local approcximation intro} below.

A clever argument (by contradiction) in~\cite{Bour:Lip}  now shows that since $f$ is close to $\f$ on the net $\mathcal{N}_\d$ and $\f$ itself is bi-Lipschitz, provided that the granularity $\d$ of the net $\mathcal{N}_\d$ is small enough there must exist a time $t\in (0,\infty)$ and a location $x\in X$ such that the derivative of the Poisson evolute $P_tf$ at $x$ is a (linear) bi-Lipschitz embedding of $X$ into $Y$, with distortion at most a constant multiple of the distortion of $\f$. Here, since $P_tf$ is obtained from $f$ by averaging and $f$ is Lipschitz, the fact that its derivative is Lipschitz is automatic. The difficulty is therefore to show that this derivative is invertible with good control on the operator norm of its inverse.

If an affine mapping is invertible on a sufficiently fine net of $B_X$ then it is also invertible globally on $X$. So, if one could show that the first order Taylor polynomial of $P_tf$ at $x$ is sufficiently close to $f$  on a sub-ball of $B_X$ (and hence also close to $\f$ on the intersection of that sub-ball with $\mathcal{N}_\d$) whose radius is at least a sufficiently large constant multiple of $\d$, then this would imply the desired (quantitative) invertibility of the derivative of $P_tf$ at $x$. Here, due to scale-invariance, ``sufficiently close" means closeness after normalization by the radius of the sub-ball. This is the reason why a good lower bound on the modulus $r^{X\to Y}(\e)$ is helpful for Bourgain's discretization problem. Of course, one cannot hope to prove the bound~\eqref{eq:bourgain's bound} in this way in full generality, since~\eqref{eq:bourgain's bound} holds for any Banach space $Y$ while by Theorem~\ref{thm:BJLPS quote} we know that for $r^{X\to Y}(\e)$ to be positive we need $Y$ to admit an equivalent uniformly convex norm (thus this approach is doomed to fail when, e.g., $Y=\ell_1$).

Nevertheless, when $Y$ is uniformly convex one could take the fact that Bourgain's strategy does succeed as a hint to  try to use the first order Taylor polynomial of $P_tf$ as the affine mapping that is hopefully close to $f$ on some macroscopically large sub-ball, thus obtaining a lower bound on $r^{X\to Y}(\e)$. This motivates the approach of the present article, eventually leading to Theorem~\ref{thm:main intro}.

An important issue here is that in any such argument one must find a way to use the fact that $Y$ admits an equivalent uniformly convex norm, which by the work of Mart\'inez, Torrea and Xu~\cite{MTX} is equivalent to the validity of a certain $Y$-valued Littlewood--Paley--Stein inequality for the Poisson semigroup; see Section~\ref{sec:LPS} below for a precise formulation. So, since the only underlying assumption on $Y$ is equivalent to a certain $L_q$ estimate, it is natural to use it to bound  the $L_q$ distance of $f(x)$ to the first order Taylor polynomial of $P_tf$ at $x$, for an appropriate measure on the pairs $(x,t)\in X\times (0,\infty)$ of locations and times. By scale-invariance considerations, one arrives at a natural candidate $L_q$ inequality that asserts that an appropriately normalized distance from $f(x)$ to the first order Taylor polynomial of $P_tf$ at $x$ is a Carleson measure; see  Section~\ref{sec:local approcximation intro}  below.

However, in Section~\ref{sec:computations} below we show that the desired $L_q$ inequality does not hold true even when $Y$ is a Hilbert space. The computations of Section~\ref{sec:computations} do suggest that for our purposes it would be better to use the heat semigroup in place of the Poisson semigroup. Unfortunately, the possible validity of the vector-valued Littlewood--Paley--Stein inequality for the heat semigroup for uniformly convex targets was previously unknown, being left open in~\cite{MTX} as part of a more general question that remains open in its full generality. So, as a key tool of independent interest, in the present article we also establish the desired vector-valued Littlewood--Paley--Stein inequality for the heat semigroup (which, by a standard subordination argument, is formally stronger than the corresponding inequality of~\cite{MTX} for the Poisson semigroup); see   Section~\ref{sec:LPS} below. With this tool at hand, we proceed to prove Theorem~\ref{thm:main intro} using the heat semigroup via the strategy outlined above.

\subsection{Geometric invariants}

Theorem~\ref{thm:main intro} is a consequence of the analytic statement that is contained in Theorem~\ref{thm:lip doro intro} below. To formulate it, we need to first introduce  notation related to (well-studied) geometric parameters that govern the ensuing arguments. We also recall the following standard conventions for asymptotic notation. Given $a,b\in (0,\infty)$, the notations
$a\lesssim b$ and $b\gtrsim a$ mean that $a\le cb$ for some
universal constant $c\in (0,\infty)$. The notation $a\asymp b$
stands for $(a\lesssim b) \wedge  (b\lesssim a)$. If we need to allow for dependence on parameters, we indicate this by subscripts. For example, in the presence of an auxiliary parameter $q$, the notation $a\lesssim_q b$ means that $a\le c_qb$, where $c_q\in (0,\infty)$ is allowed to depend only on $q$, and similarly for the notations $a\gtrsim_q b$ and $a\asymp_q b$.

\subsubsection{The geometry of $Y$}\label{sec:Y geom} Despite the fact that in the definition~\eqref{def:uniform convexity} of uniform convexity the parameter $\d\in (0,\infty)$ is allowed to have an arbitrary dependence on $\e\in (0,2]$, the following  deep theorem of Pisier~\cite{Pisier:1975} asserts that  by passing to an equivalent norm one can always ensure that $\d$ is at least a constant multiple of a fixed power of $\e$.

\begin{theorem}[Pisier's renorming theorem]\label{thm:pisier renorming} Suppose that $(Y,\|\cdot\|_Y)$ is a uniformly convex Banach space. Then there exists a norm $\|\cdot\|$ on $Y$ that is equivalent to $\|\cdot\|_Y$ (thus there are $a,b\in (0,\infty)$ such that $a\|y\|_Y\le \|y\|\le b\|y\|_Y$ for all $y\in Y$) and constants $C,q\in [2,\infty)$ such that for every $x,y\in Y$ with $\|x\|,\|y\|\le 1$ we have $\|x+y\|\le 2-\frac{1}{C^q}\|x-y\|^q$.
\end{theorem}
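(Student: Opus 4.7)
The plan is to proceed through $Y$-valued martingales. The key intermediate notion is \emph{martingale cotype $q$}: $Y$ has martingale cotype $q$ with constant $C$ if every $Y$-valued $L^q$-martingale $(M_k)_{k\ge 0}$ with differences $d_k = M_k - M_{k-1}$ satisfies
$$
\Big(\sum_{k\ge 1}\E\|d_k\|_Y^q\Big)^{1/q} \le C\sup_k \|M_k\|_{L^q(Y)}.
$$
I would first show that any uniformly convex $Y$ has martingale cotype $q$ for some finite $q \in [2,\infty)$, and then turn that martingale inequality back into a pointwise power-type modulus of convexity by a renorming.

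\textbf{Step 1.} The main obstacle is extracting a finite $q$ from the purely qualitative hypothesis that $\delta(\e)>0$ for each $\e\in (0,2]$. The bridge is superreflexivity: by the Milman--Pettis theorem uniform convexity forces $Y$ to be superreflexive, and by Enflo's finite-tree characterization no binary tree of arbitrarily large depth can be $(1+\e)$-finitely representable in $Y$ once $\e$ is small enough. Quantitatively, a single application of~\eqref{def:uniform convexity} shows that if the two leaves of a one-step tree with increments $\pm d$ have norm at most $1$, then the root has norm at most $1-\delta(\|d\|_Y)$. Iterating this along a $Y$-valued dyadic martingale $(M_k)$ and pigeonholing over the scales occupied by the ratios $\|d_k\|_Y/\|M_k\|_Y$ forces these ratios to contribute geometrically to the drop of $\|M_0\|_Y$ from $\|M_\infty\|_{L^q(Y)}$. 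Confronting this geometric decay with the finite-tree bound shows that only finitely many scales can contribute significantly, which precisely yields the martingale cotype-$q$ inequality with some finite $q = q(\delta)$ and constant $C = C(\delta)$.

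\textbf{Step 2.} Following Pisier, I would construct an equivalent norm $\|\cdot\|_*$ whose $q$-th power at $y$ encodes the worst-case $\ell_q$-sum of martingale differences for dyadic $Y$-valued martingales associated with $y$, normalized so that the cotype-$q$ inequality of Step~1 gives $\|y\|_Y \le \|y\|_* \lesssim C\|y\|_Y$. To verify that $\|\cdot\|_*$ has power-type $q$, fix $x,y$ with $\|x\|_*,\|y\|_*\le 1$, choose near-extremal martingales $M^x, M^y$ witnessing the suprema for $x$ and $y$, and splice them into a single martingale that starts at $(x+y)/2$ and branches with equal probability to $x$ and $y$ at time $1$. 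The root branching contributes a summand of order $\|x-y\|_Y^q$ to the $\ell_q$-sum of differences, which after rearrangement yields $\|x+y\|_* \le 2-c\|x-y\|_*^q$. Renaming $\|\cdot\|_*$ as $\|\cdot\|$ and adjusting constants produces the stated form. The real difficulty lies in Step~1; once the quantitative cotype inequality is in place, Step~2 is essentially an exercise in definition-chasing.
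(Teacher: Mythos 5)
The paper does not prove this theorem; it states it as a quoted result and cites Pisier's 1975 paper. So there is no ``paper's proof'' to compare against, only the external reference. Your plan does follow the broad shape of Pisier's original argument (superreflexivity, then a martingale inequality, then a renorming), but both steps as you have written them contain gaps, one of which looks like a genuine sign error rather than mere compression.

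In Step~1, the key missing ingredient is the \emph{submultiplicativity} that makes a finite $q$ appear. Superreflexivity (via James/Enflo) only tells you that for each $\theta<1$ there is a finite depth $N(\theta)$ beyond which no $\theta$-separated dyadic tree fits in $B_Y$; it says nothing about \emph{how fast} $N(\theta)$ grows as $\theta\to 1$. To get martingale cotype $q$ for a \emph{finite} $q$, one must show that $N(\theta)\lesssim (1-\theta)^{-q}$, and this is where the real content lies: one shows that the obstruction is submultiplicative under concatenating trees (so $N(\theta^2)\le N(\theta)^2$, roughly) and deduces polynomial growth, hence a finite exponent. ``Iterating and pigeonholing over scales'' is not a substitute for this: if you only know $\delta(\e)>0$ without any structure, the naive iteration of $\|M_{k-1}\|\le R(1-\delta(\cdot))$ per level gives no control on the number of scales that can contribute, and no particular $q$ emerges. (Also a small point: Milman--Pettis gives reflexivity, not superreflexivity; the extra step is that uniform convexity passes to finitely representable subspaces, hence all of them are reflexive.)

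In Step~2, the splicing argument as stated gives an inequality pointing the wrong way. If $\|y\|_*^q$ is a \emph{supremum} over martingales with $M_0=y$ of $\sum_k\|d_k\|_{L_q}^q$, then the spliced martingale that starts at $(x+y)/2$ and branches to $x,y$ at time $1$, then continues with the near-extremal martingales $M^x,M^y$, shows that
\[
\left\|\tfrac{x+y}{2}\right\|_*^q \;\ge\; \left\|\tfrac{x-y}{2}\right\|_Y^q + \tfrac12\|x\|_*^q + \tfrac12\|y\|_*^q ,
\]
i.e.\ that the midpoint has a \emph{bigger} $*$-norm — the opposite of what power-type uniform convexity requires. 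The fix in Pisier's actual construction is not cosmetic: one has to either work with an infimum-type functional, or impose constraints $\sup_k\|M_k\|_{L_\infty}\le \alpha\|y\|_Y$ that the spliced martingale would violate, and then a separate argument is needed to recover the triangle inequality and equivalence with $\|\cdot\|_Y$. Calling Step~2 ``an exercise in definition-chasing'' undersells the difficulty; the passage from martingale cotype $q$ to an equivalent $q$-uniformly convex norm is itself a nontrivial theorem of Pisier, of comparable depth to the one you are trying to prove, so as written your argument reduces the statement to two results that are not easier than the original.
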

In the literature, a Banach space that admits an equivalent uniformly convex norm is often called a superreflexive Banach space.  Also, the conclusion of Theorem~\ref{thm:pisier renorming} is commonly referred to as the assertion that $Y$ admits an equivalent norm with modulus of uniform convexity of power type $q$.

For norms that satisfy the conclusion of Theorem~\ref{thm:pisier renorming}, Pisier proved~\cite{Pisier:1975} the following important martingale inequality. To state it, recall that a sequence of $Y$-valued random variables $\{M_k\}_{k=1}^\infty$ on a probability space $(\mathscr{S},\mathscr{F},\mu)$ is said to be a martingale if there exists an increasing sequence of sub-$\sigma$-algebras $\mathscr{F}_1\subset \mathscr{F}_2\subset\ldots\subset \mathscr{F}$ such that $\E[M_{k+1}|\mathscr{F}_k]=M_k$ for every $k\in \N$. Here $\E[\,\cdot\, |\mathscr{F}_k]$ stands for the conditional expectation relative to the $\sigma$-algebra $\mathscr{F}_k$ and we are assuming that $M_k\in L_1(\mu;Y)$ for every $k\in \N$, where for $q\in [1,\infty]$ the corresponding vector-valued Lebesgue space $L_q(\mu;Y)$ consists (as usual) of all the $\mathscr{F}$-measurable mappings $f:\mathscr{S}\to Y$ for which $\|f\|_{L_q(\mu;Y)}^q=\int_{\mathscr{S}}\|f\|_Y^q\ud \mu<\infty$.

\begin{theorem}[Pisier's martingale inequality]\label{thm:pisier inequality} Fix $C\in (0,\infty)$ and $q\in [2,\infty)$. Suppose that $(Y,\|\cdot\|_Y)$ is a Banach space such that $\|x+y\|_Y\le 2-\frac{1}{C^q}\|x-y\|_Y^q$ for every $x,y\in Y$ with $\|x\|_Y,\|x\|_Y\le 1$. Then every martingale $\{M_k\}_{k=1}^\infty\subset L_q(\mu;Y)$  satisfies
\begin{equation}\label{eq:pisier's ineq}
\bigg(\sum_{k=1}^\infty \|M_{k+1}-M_k\|_{L_q(\mu;Y)}^q\bigg)^{\frac{1}{q}}\lesssim C\sup_{k\in \N} \|M_k\|_{L_q(\mu;Y)}.
\end{equation}
\end{theorem}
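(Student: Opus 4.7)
The plan is to upgrade the hypothesized power-type $q$ modulus of uniform convexity to a pointwise ``$q$-th power midpoint'' inequality for the convex function $z\mapsto \|z\|_Y^q$, then exploit it by symmetrizing the martingale differences, using Jensen's inequality twice, and finally telescoping.

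\emph{Step 1 (pointwise inequality).} I would first prove that there exists $K\in[1,\infty)$ with $K\lesssim C$ such that
\begin{equation}\label{eq:planned pointwise}
\forall\,x,y\in Y,\qquad \left\|\frac{x+y}{2}\right\|_Y^q+\frac{1}{K^q}\left\|\frac{x-y}{2}\right\|_Y^q\leq \frac{\|x\|_Y^q+\|y\|_Y^q}{2}.
\end{equation}
The natural attempt---homogenize by $R=\max(\|x\|_Y,\|y\|_Y)$, apply the hypothesis to $x/R,y/R$, and use $(1-t)^q\leq 1-t$ for $t\in[0,1]$---yields only the weaker bound with $\max(\|x\|_Y,\|y\|_Y)^q\leq \|x\|_Y^q+\|y\|_Y^q$ on the right-hand side. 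Recovering the sharp arithmetic-mean factor $\tfrac12$ requires an additional convexification step, e.g., averaging the modulus condition along the segment joining $x$ and $y$, or passing to an Asplund average to produce an equivalent norm for which the pointwise inequality holds directly with a constant comparable to $C$. This upgrade is the main obstacle: without the factor $\tfrac12$ a spurious multiplicative $2$ appears in Step~2 and destroys the telescoping in Step~3.

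\emph{Step 2 (symmetrization and conditional inequality).} Set $d_k\eqdef M_{k+1}-M_k$, so that $\E[d_k\mid\mathscr{F}_k]=0$, and, enlarging $\mathscr{S}$ if necessary, let $d_k'$ be an independent copy of $d_k$ conditionally on $\mathscr{F}_k$. Apply \eqref{eq:planned pointwise} pointwise with $x=M_k+d_k$ and $y=M_k+d_k'$; since $(x+y)/2=M_k+(d_k+d_k')/2$ and $(x-y)/2=(d_k-d_k')/2$, taking $\E[\,\cdot\mid\mathscr{F}_k]$ and using the conditional identical distribution of $d_k$ and $d_k'$ yields
\[
\E\bigl[\|M_k+\tfrac{d_k+d_k'}{2}\|_Y^q\bigm|\mathscr{F}_k\bigr]+\frac{1}{K^q}\E\bigl[\|\tfrac{d_k-d_k'}{2}\|_Y^q\bigm|\mathscr{F}_k\bigr]\leq \E\bigl[\|M_{k+1}\|_Y^q\bigm|\mathscr{F}_k\bigr].
\]
Two applications of Jensen's inequality for the convex function $\|\cdot\|_Y^q$ finish this step: the first, using $\E[(d_k+d_k')/2\mid\mathscr{F}_k]=0$, gives $\E[\|M_k+(d_k+d_k')/2\|_Y^q\mid\mathscr{F}_k]\geq \|M_k\|_Y^q$; the second, using $d_k=\E[d_k-d_k'\mid d_k,\mathscr{F}_k]$, gives $\E[\|d_k-d_k'\|_Y^q\mid\mathscr{F}_k]\geq \E[\|d_k\|_Y^q\mid\mathscr{F}_k]$. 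Substituting and absorbing the factor $2^q$ from $\|(d_k-d_k')/2\|_Y^q$ into the constant produces
\[
\|M_k\|_Y^q+\frac{1}{(2K)^q}\E\bigl[\|d_k\|_Y^q\bigm|\mathscr{F}_k\bigr]\leq \E\bigl[\|M_{k+1}\|_Y^q\bigm|\mathscr{F}_k\bigr].
\]

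\emph{Step 3 (telescoping).} Integrating yields the deterministic inequality $\|d_k\|_{L_q(\mu;Y)}^q\leq (2K)^q(\|M_{k+1}\|_{L_q(\mu;Y)}^q-\|M_k\|_{L_q(\mu;Y)}^q)$. Summing over $k\in\N$ telescopes the right-hand side to at most $(2K)^q\sup_{k\in\N}\|M_k\|_{L_q(\mu;Y)}^q$. Taking $q$-th roots and using $K\lesssim C$ from Step~1 delivers \eqref{eq:pisier's ineq}. The entire difficulty is thus concentrated in Step~1: bridging the gap between the ``max'' bound that the modulus condition delivers for free and the arithmetic-mean bound \eqref{eq:planned pointwise} that the telescoping argument demands, without losing control of the constant.
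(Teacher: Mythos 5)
Your outline reproduces the standard route to Pisier's inequality, which is also the one the paper implicitly takes: the paper does not prove Theorem~\ref{thm:pisier inequality} itself but refers to~\cite[Section~6.3]{MN14} for the martingale argument and to~\cite[Proposition~7]{BCL94} (building on~\cite{Bal92}) for precisely the pointwise inequality you call Step~1. Steps~2 and~3 of your proposal are correct and carefully reasoned: the conditional symmetrization with an independent copy $d_k'$ is the right way to eliminate the uncontrollable term $\E[\|M_k-d_k\|_Y^q\mid\mathscr F_k]$ that a naive application of the pointwise inequality leaves behind; the two Jensen steps are applied legitimately (using $\E[(d_k+d_k')/2\mid\mathscr F_k]=0$ and $\E[d_k'\mid d_k,\mathscr F_k]=0$, which requires the conditional independence you impose); and the telescoping with the absorbed factor $2^q$ delivers the constant $2K\lesssim C$.

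The genuine gap is that Step~1 is asserted, not proved. You correctly diagnose that the obvious homogenization by $R=\max(\|x\|_Y,\|y\|_Y)$ gives only $\|\tfrac{x+y}{2}\|_Y^q+\tfrac{1}{K^q}\|\tfrac{x-y}{2}\|_Y^q\le\max(\|x\|_Y,\|y\|_Y)^q$, which is too weak for the telescoping, and you correctly identify the upgrade to the arithmetic-mean right-hand side as the crux. But the two remedies you mention (``averaging along the segment'' or ``Asplund averaging'') are only gestured at; neither is carried out, and it is not obvious that either gives the required constant $K\lesssim C$ without further work. This is exactly what~\cite[Proposition~7]{BCL94} supplies. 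As it stands, your proposal is a correct and faithful reduction to that lemma, but it is not a complete proof: you would need either to cite~\cite{BCL94,Bal92} for Step~1 (as the paper does) or to reproduce that argument, taking care to verify the constant.
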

For the proof of~\eqref{eq:pisier's ineq} as stated above (i.e., with the constant factor that appears in the right hand side of~\eqref{eq:pisier's ineq} being proportional to the constant $C$ of the assumption on $Y$), see~\cite[Section~6.3]{MN14} combined with the proof of~\cite[Proposition~7]{BCL94} (the case $q=2$ of this argument is due to K. Ball~\cite{Bal92}).

Inspired by Theorem~\ref{thm:pisier inequality}, Pisier introduced the following terminology in~\cite{Pisier:1986}. Given a Banach space $(Y,\|\cdot\|_Y)$ and $q\ge 2$, the martingale cotype $q$ constant of $Y$, denoted $\mathfrak{m}_q(Y,\|\cdot\|_Y)\in [1,\infty]$ or simply $\mathfrak{m}_q(Y)$ if the norm is clear from the context, is the supremum of $(\sum_{k=1}^\infty \int_{\mathscr{S}}\|M_{k+1}-M_k\|_{Y}^q\ud \mu)^{1/q}$ over all martingales $\{M_k\}_{k=1}^\infty\subset L_q(\mu;Y)$  with $\sup_{k\in \N} \int_{\mathscr{S}}\|M_k\|_{Y}^q\ud \mu=1$ (and over all probability spaces $(\mathscr{S},\mathscr{F},\mu)$). If $\mathfrak{m}_q(Y)<\infty$ then we say that $Y$ has martingale cotype $q$. Pisier's work~\cite{Pisier:1986} yields the remarkably satisfactory characterization that $Y$ admits an equivalent norm whose modulus of uniform convexity has power type $q$ if and only if $Y$ has martingale cotype $q$ (with the relevant constants being within universal constant factors of each other).

The UMD constant of a Banach space $(Y,\|\cdot\|_Y)$, commonly denoted $\beta(Y,\|\cdot\|_Y)\in (0,\infty]$ or simply $\beta(Y)$ if the norm is clear from the context, is the infimum over those $\beta\in (0,\infty]$ such that for every martingale $\{M_k\}_{k=1}^\infty\subset L_2(\mu;Z)$, every $n\in \N$ and every $\e_1,\ldots,\e_{n}\in\{-1,1\}$ we have
$$
\bigg\|M_1+\sum_{k=1}^{n}\e_k(M_{k+1}-M_k)\bigg\|_{L_2(\mu;Y)}\le \beta \|M_{k+1}\|_{L_2(\mu;Y)}.
$$
If $\beta(Y)<\infty$ then $Y$ is said to be a UMD space.  There exist~\cite{Pis75} uniformly convex Banach spaces that are not UMD, and there even exist such Banach lattices~\cite{Bou83,Qiu}. If $Y$ is UMD then it admits an equivalent uniformly convex norm~\cite{Mau75}. As a quantitative form of this assertion (that will be used below), it follows from~\cite[Section~4.4]{HLN} that there exists $2\le q\lesssim \beta(Y)$ such that $\mathfrak{m}_q(Y)\lesssim \beta(Y)^2$.

\subsubsection{The geometry of $X$}\label{sec:X geom} Recalling that $(X,\|\cdot\|_X)$ is an $n$-dimensional (real) normed space, once we fix a Hilbertian norm $|\cdot|$ on $X$ we can identify it (as a real vector space) with $\R^n$. The specific choice of Euclidean structure will be very important later, but at this juncture we shall think of $|\cdot|$ as an arbitrary Hilbertian norm on $X$ and derive an inequality that holds in such (full) generality.

Throughout what follows, the scalar product of two vectors $x,y\in \R^n$ is denoted $x\cdot y\in \R$, the volume of a Lebesgue measurable subset $\Omega\subset \R^n$ is denoted $|\Omega|$, and integration with respect to the Lebesgue measure on $\R^n$ is indicated by $\mathrm{d} x$. The Euclidean unit ball in $\R^n$ is denoted
$B^n=\{x\in \R^n:\ |x|\le 1\}$. Thus $|B^n|=\pi^{n/2}/\Gamma(1+n/2)$. The Euclidean unit sphere is denoted (as usual) $S^{n-1}=\partial B^n=\{x\in \R^n:\ |x|=1\}$, integration with respect to the surface area measure on $S^{n-1}$ is indicated by $\mathrm{d}\sigma$ and, while slightly abusing notation, we denote the surface area of a Lebesgue measurable subset $A\subset S^{n-1}$ by $|A|$. Thus $|S^{n-1}|=n|B^n|=2\pi^{n/2}/\Gamma(n/2)$.

If $\Omega\subset \R^n$ is Lebesgue measurable and has positive finite volume  then it will be convenient to use the following notation for the average over $\Omega$ of an integrable function $f:\Omega\to \R$.
\begin{equation}\label{eq:fint def}
\fint_\Omega f(x)\ud x\eqdef \frac{1}{|\Omega|}\int_\Omega f(x)\ud x.
\end{equation}
Analogously, write $\fint_A \phi\ud \sigma\eqdef \frac{1}{|A|}\int_A \phi(\sigma)\ud \sigma$ for measurable $A\subset S^{n-1}$ and integrable $\phi:A\to \R$.

Using standard notation of asymptotic convex geometry (as in e.g.~\cite{BGVV14}),  denote\footnote{In the literature it is common to suppress the Euclidean norm $|\cdot|$ in this notation, but these quantities do depend on it. A possible more precise notation would have been to use $M(\|\cdot\|_X,|\cdot|)$ and $I_q(\|\cdot\|_X,|\cdot|)$. However, this more cumbersome notation isn't necessary here because the ambient Euclidean norm will always be clear from the context.}
\begin{equation}\label{eq:defIqMq}
M(X)\eqdef \fint_{S^{n-1}} \|\sigma\|_X \ud \sigma \qquad\mathrm{and}\qquad\forall\, q\in (0,\infty],\qquad  I_q(X)\eqdef  \bigg(\fint_{B_X} |x|^q\ud x\bigg)^{\frac{1}{q}} .
\end{equation}
In what follows, the quantity $I_q(X)M(X)$ has an important role. We shall present a nontrivial upper bound on it (for an appropriate choice of Euclidean norm $|\cdot|$ on $X$) as a quick consequence of powerful results from asymptotic convex geometry, and we shall formulate conjectures about the possible availability of better bounds; some of these conjectures may be quite difficult, however, because they relate to longstanding open problems in convex geometry. We postpone these discussions for the moment since it will be more natural to treat them after we present Theorem~\ref{thm:lip doro intro}.

Fixing from now on a target Banach space $(Y,\|\cdot\|_Y)$, for $p\in [1,\infty]$ the corresponding $Y$-valued Lebesgue--Bochner space on a domain  $\Omega\subset \R^n$ will be denoted $L_p(\Omega;Y)$ (the underlying measure on $\Omega$ will always be understood to be the Lebesgue measure). When  $Y=\R$ we shall use the usual simpler notation $L_p(\Omega;\R)=L_p(\Omega)$ for the corresponding scalar-valued function space.

The $Y$-valued heat semigroup on $\R^n$ will be denoted by $\{H_t\}_{t\in (0,\infty)}$. Thus, for every $t\in (0,\infty)$ and $f\in L_1(\R^n;Y)$ the function $H_tf:X\to Y$ is defined by
$$
\forall\, x\in \R^n,\qquad H_tf(x)\eqdef h_t*f(x)=\int_{\R^n} h_t(z)f(x-z)\ud z,
$$
where the corresponding heat kernel $h_t:\R^n\to [0,\infty)$ is given by
$$
\forall(t,x)\in (0,\infty)\times \R^n,\qquad h_t(x)\eqdef \frac{1}{(4\pi t)^{\frac{n}{2}}}e^{-\frac{|x|^2}{4t}}=\frac{1}{t^{\frac{n}{2}}}h_1\!\!\left(\frac{x}{\sqrt{t}}\right).
$$

The first order Taylor polynomial at a point $x\in \R^n$ of a differentiable function $f:\R^n\to Y$ will be denoted below by $\Taylor_x^1(f):\R^n\to Y$. Thus, $\Taylor_x^1(f)$ is the affine function given by
$$
\forall\, x,y\in \R^n,\qquad \Taylor_x^1(f)(y)\eqdef f(x)+(y-x)\cdot \nabla f(x),
$$
where for every $x,z\in \R^n$ we set (as usual) $z\cdot \nabla f(x)=\sum_{j=1}^n z_j\partial_j f(x)=\lim_{\e\to 0} (f(x+\e z)-f(x))/\e$ to be the corresponding $Y$-valued directional derivative of $f$.

Despite the fact that in our setting $\R^n$ is endowed with two metrics, namely those that are induced by $\|\cdot\|_X$ and $|\cdot|$, when discussing Lipschitz constants of mappings from subsets of $\R^n$ to $Y$ we will adhere to the convention that they are exclusively with respect to the metric that is induced by the norm $\|\cdot\|_X$. In particular, we shall use the following notation for a mapping $f:\R^n\to Y$.
$$
\|f\|_{\Lip(X,Y)}\eqdef \sup_{\substack{x,y\in \R^n\\ x\neq y}}\frac{\|f(x)-f(y)\|_Y}{\|x-y\|_X}\qquad\mathrm{and}\qquad \|f\|_{\Lip(B_X,Y)}\eqdef \sup_{\substack{x,y\in B_X\\ x\neq y}}\frac{\|f(x)-f(y)\|_Y}{\|x-y\|_X}.
$$
Hence, if $f$ is differentiable then $\|f\|_{\Lip(X,Y)}=\sup_{z\in \partial B_X} \|z\cdot\nabla f\|_{L_\infty(\R^n;Y)}$.

\subsection{Dorronsoro estimates}\label{sec:local approcximation intro} Our proof of Theorem~\ref{thm:main intro} uses  Theorem~\ref{thm:lip doro intro} below, which shows  that at most scales and locations the first order Taylor polynomial of a heat evolute of a $1$-Lipschitz function $f:\R^n\to Y$ must be close to $f$ itself.  Using standard terminology, our arguments imply that $(t^{-q}\fint_{x+tB_X}\|f(y)-\Taylor_x^1(H_{\gamma t^2}f)(y)\|_Y^q\ud y)\frac{\ud x\ud t}{t}$ is a Carleson measure for a certain $\gamma>0$.

\begin{theorem}\label{thm:lip doro intro} There exists a universal constant $\kappa\in [2,\infty)$ with the following property. Suppose that $q\in [2,\infty)$ and $n\in \N$, and that $(X,\|\cdot\|_X)$ and $(Y,\|\cdot\|_Y)$ are Banach spaces that satisfy $\dim(X)=n$ and $\mathfrak{m}_q(Y)<\infty$. Let $|\cdot|$ be any Hilbertian norm on $X$, thus identifying $X$ with $\R^n$. Define $\gamma,K\in (0,\infty)$ by
\begin{equation}\label{eq:def gamma}
\gamma=\gamma(q,X)\eqdef  \frac{I_q(X)}{\sqrt{n}M(X)}\qquad\mathrm{and}\qquad K=K(q,n,X,Y)\eqdef \kappa \sqrt[4]{n} \cdot \mathfrak{m}_q(Y) \sqrt{I_q(X)M(X)}.
\end{equation}
Then every compactly supported Lipschitz function $f:\R^n\to Y$ satisfies the following estimate.
\begin{equation}\label{eq:our new doro}
\bigg(\int_{\R^n}\int_0^\infty \fint_{x+tB_X}\frac{\|f(y)-\Taylor_x^1(H_{\gamma t^2}f)(y)\|_Y^q}{t^{q+1}}\ud y \ud t \ud x \bigg)^{\frac{1}{q}}\le K |\supp(f)|^{\frac{1}{q}}\|f\|_{\Lip(X,Y)}.
\end{equation}
\end{theorem}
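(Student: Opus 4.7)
The plan is to decompose the error at scale $s=\gamma t^2$ as
\begin{equation*}
f(y) - \Taylor_x^1(H_s f)(y) = \underbrace{(f-H_s f)(y)}_{\mathsf A(y,s)} + \underbrace{(H_s f(y) - \Taylor_x^1(H_s f)(y))}_{\mathsf B(x,y,s)},
\end{equation*}
and to control the resulting two contributions $\mathcal I_{\mathsf A}$ and $\mathcal I_{\mathsf B}$ separately via the vector-valued Littlewood--Paley--Stein $\mathcal G$-function inequality for the heat semigroup that this paper establishes as its key analytic tool.

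For the Taylor remainder $\mathsf B$, write $y = x + tz$ with $z \in B_X$ and apply Taylor's integral remainder, using the commutation $\partial_i \partial_j H_s f = \partial_i H_s(\partial_j f)$, to obtain
\begin{equation*}
\mathsf B(x, x+tz, s) = t^2 \int_0^1 (1-\tau)\, z\cdot \nabla H_s(z\cdot\nabla f)(x+\tau t z)\, d\tau.
\end{equation*}
Raise to the $q$-th power using Jensen's inequality in $\tau$, translate $x' = x + \tau t z$, and convert $dt/t^{q+1}$ to $ds/s$ via $s = \gamma t^2$, producing a factor $\gamma^{-q/2}$. The integrand is then in LPS form $\|\sqrt s\, z\cdot\nabla H_s g_z(x')\|_Y^q\, ds/s$ with $g_z := z\cdot\nabla f$, and the LPS inequality controls it by $|z|^q \mathfrak m_q(Y)^q \|g_z\|_{L_q(\R^n;Y)}^q$. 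Since $\|g_z\|_{L_q} \le \|z\|_X\|f\|_{\Lip(X,Y)} |\supp(f)|^{1/q} \le \|f\|_{\Lip(X,Y)}|\supp(f)|^{1/q}$ for $z \in B_X$, averaging over $B_X$ introduces $\fint_{B_X}|z|^q\,dz = I_q(X)^q$. Substituting the definition of $\gamma$ gives $\gamma^{-1/2} I_q(X) = n^{1/4}\sqrt{I_q(X) M(X)}$, which is precisely the factor $K$ claimed for this contribution.

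For the smoothing error $\mathsf A$, Fubini (since $\mathsf A$ is independent of $x$) reduces the contribution to $\mathcal I_{\mathsf A} = \int_0^\infty \|f - H_{\gamma t^2} f\|_{L_q(\R^n;Y)}^q\, dt/t^{q+1}$. Starting from the identity $f - H_s f = 2\int_0^{\sqrt s} \tau\, \Delta H_{\tau^2} f\, d\tau$ (the substitution $u = \tau^2$ in $f - H_s f = \int_0^s\Delta H_u f\,du$) combined with the rotation-invariant representation $\Delta g = n\fint_{S^{n-1}}(v\cdot\nabla)^2 g\, d\sigma(v)$ and the factorization $(v\cdot\nabla)^2 H_{\tau^2} f = (v\cdot\nabla)H_{\tau^2}(v\cdot\nabla f)$, the integrand is expressed as an $L_q$-Hardy-type iterated integral in $\tau$. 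An $L_q$-Hardy inequality (exchanging the averaged integral $(1/\sigma)\int_0^\sigma h\, d\tau$ in the measure $d\sigma/\sigma$ for $h$ in the measure $d\tau/\tau$), together with Minkowski's inequality in the sphere variable $v$, reduces the estimate to the $\mathcal G$-function of the family $\{v\cdot\nabla f\}_{v\in S^{n-1}}$. The LPS inequality applied to each $g_v = v\cdot\nabla f$, with $\|g_v\|_{L_q}\le \|v\|_X\|f\|_{\Lip(X,Y)}|\supp(f)|^{1/q}$, then produces the linear spherical moment $\fint_{S^{n-1}}\|v\|_X\,d\sigma(v) = M(X)$ after integration in $v$.

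The main obstacle is the delicate dimensional bookkeeping in the $\mathsf A$-term: the factor $n$ coming from the spherical representation of $\Delta$ must balance against the scaling $\gamma^{1/2}\sim n^{-1/4}$, and matching the stated $n^{1/4}$ rather than a worse exponent requires using the heat LPS inequality at the level of the $L_q(d\tau/\tau)$-$\mathcal G$-function (not merely a pointwise or supremum form), along with Minkowski's inequality on the sphere so that $M(X)$ enters through its first moment rather than through a higher moment such as $(\fint_{S^{n-1}}\|v\|_X^q\,d\sigma)^{1/q}$. The precise choice $\gamma = I_q(X)/(\sqrt n\,M(X))$ is dictated by the requirement that the two resulting contributions $\mathcal I_{\mathsf A}$ and $\mathcal I_{\mathsf B}$ balance at the same dimensional exponent $n^{1/4}\sqrt{I_q(X)M(X)}$.
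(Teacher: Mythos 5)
Your overall decomposition $f - \Taylor_x^1(H_sf) = (f - H_sf) + (H_sf - \Taylor_x^1(H_sf))$ and your treatment of the Taylor remainder $\mathsf B$ are both correct and match the paper's proof essentially step by step: the integral Taylor remainder, Jensen in the interpolation variable, translation invariance, the change of variable $s = \gamma t^2$ producing $\gamma^{-1/2}$, the directional Littlewood--Paley--Stein estimate \eqref{eq:second spatial} applied to $z\cdot\nabla f$, and averaging $|z|^q$ over $B_X$ to produce $I_q(X)^q$ are all as in the paper.

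The gap is in the $\mathsf A$-term. Writing $\dot H_u f = \Delta H_u f$ and invoking $\Delta g = n\fint_{S^{n-1}}(v\cdot\nabla)^2 g\,\ud\sigma(v)$, followed by Minkowski's inequality on the sphere and the directional LPS estimate \eqref{eq:second spatial} for each fixed $v \in S^{n-1}$, yields
\begin{equation*}
\bigg(\int_0^\infty\left\|\sqrt{u}\,\Delta H_u f\right\|_{L_q(\R^n;Y)}^q\frac{\ud u}{u}\bigg)^{\frac1q}
\le n\fint_{S^{n-1}}\bigg(\int_0^\infty\left\|\sqrt{u}\,(v\cdot\nabla)H_u(v\cdot\nabla f)\right\|_{L_q(\R^n;Y)}^q\frac{\ud u}{u}\bigg)^{\frac1q}\ud\sigma(v)
\lesssim n\,\mathfrak{m}_q(Y)\fint_{S^{n-1}}\|v\cdot\nabla f\|_{L_q(\R^n;Y)}\ud\sigma(v),
\end{equation*}
with a factor of $n$, not $\sqrt{n}$. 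This cannot be repaired by working at the level of the $L_q(\ud\tau/\tau)$ $\mathcal G$-function or by being careful about which spherical moment of $\|\cdot\|_X$ enters, as you suggest in your last paragraph: the loss occurs already in the pointwise bound $\|\Delta H_ug\| \le n\fint_{S^{n-1}}\|(v\cdot\nabla)^2 H_u g\|\,\ud\sigma(v)$, before any temporal integration. With your $\mathsf A$-bound $\lesssim n\sqrt{\gamma}\,\mathfrak m_q(Y)M(X)|\supp(f)|^{1/q}\|f\|_{\Lip}$, balancing against the $\mathsf B$-bound $\lesssim\gamma^{-1/2}\mathfrak m_q(Y)I_q(X)|\supp(f)|^{1/q}\|f\|_{\Lip}$ forces $\gamma = I_q(X)/(nM(X))$ and $K\asymp\sqrt{n}\sqrt{I_q(X)M(X)}\,\mathfrak m_q(Y)$, which is off by a factor of $n^{1/4}$ from the theorem and inconsistent with the $\gamma = I_q(X)/(\sqrt n M(X))$ you quote.

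The paper avoids this by not decomposing the Laplacian into directional pieces. It instead applies the divergence-form LPS estimate \eqref{eq:first spatial} to $\dot H_sf = \operatorname{div}H_s\nabla f$ directly. The $\sqrt n$ there comes from Lemma~\ref{lem:spatial vec}, which bounds $\|\sqrt{t}\operatorname{div}H_t\vec f\|_{L_q}$ by writing $\operatorname{div}H_t\vec f(x) = -\frac{1}{2t}\int h_t(y)\,y\cdot\vec f(x-y)\,\ud y$ and estimating the resulting Gaussian first moment, which scales like $\Gamma(\frac{n+1}{2})/\Gamma(\frac n2)\asymp\sqrt n$ rather than $n$. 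This exploits cancellation across the $n$ directional contributions that a component-by-component triangle inequality destroys. To make your $\mathsf A$-estimate work, replace the spherical decomposition of $\Delta$ and \eqref{eq:second spatial} with a single application of \eqref{eq:first spatial} to $\vec f = \nabla f$ (after the Hardy step, which is fine and equivalent to the paper's Minkowski-in-$u$ argument).
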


\begin{remark}\label{rem:not poisson} As we discussed in Section~\ref{sec:bourgain}, the analogue of Theorem~\ref{thm:lip doro intro}  for the Poisson semigroup is not true. Specifically, in Section~\ref{sec:computations} we show that if $(\mathcal{H},\|\cdot\|_{\mathcal{H}})$ is a Hilbert space then for every nonconstant Lipschitz function $f:\ell_2^n\to \mathcal{H}$ and every $\gamma\in (0,\infty)$ the following integral diverges.
\begin{equation}\label{eq:poisson infinite}
\bigg(\int_{\R^n}\int_0^\infty \fint_{x+tB^n}\frac{\|f(y)-\Taylor_x^1(P_{\gamma t}f)(y)\|_{\mathcal{H}}^2}{t^{3}}\ud y \ud t \ud x\bigg)^{\frac{1}{2}}.
\end{equation}
Note that~\eqref{eq:our new doro} considers the heat evolute  of $f$ at time $\gamma t^2$ while~\eqref{eq:poisson infinite} considers the Poisson evolute of $f$ at time $\gamma t$ because these time choices are  determined by the requirement that when the argument of $f$ is rescaled the relevant quantities (namely, the left hand side of~\eqref{eq:our new doro} when $q=2$ and the quantity appearing in~\eqref{eq:poisson infinite}) have the same order of homogeneity as the right hand side of~\eqref{eq:our new doro} (when $q=2$).
\end{remark}

Inequality~\eqref{eq:our new doro} is not determined solely by intrinsic geometric properties of $X$ due to the auxiliary choice of the Hilbertian norm $|\cdot|$ on $X$,  which influences the quantities $\gamma$ and $K$ that appear in~\eqref{eq:def gamma}, as well as the meaning of the heat semigroup $\{H_t\}_{t\in [0,\infty)}$. To deduce an intrinsic statement from Theorem~\ref{thm:lip doro intro}, namely a statement that refers only to geometric characteristics of $X$ and $Y$ without any additional (a priori arbitrary) choices, let $\mathscr{A}(X,Y)$ denote the space of affine mappings from $X$ to $Y$. Then, continuing with the notations of Theorem~\ref{thm:lip doro intro}, for every $x\in X$ and $t\in (0,\infty)$ we have
$$
\fint_{x+tB_X}\frac{\|f(y)-\Taylor_x^1(H_{\gamma t^2}f)(y)\|_Y^q}{t^{q+1}}\ud y\ge \inf_{\substack{\Lambda\in \mathscr{A}(X,Y)\\\|\Lambda\|_{\Lip(X,Y)}\le \|f\|_{\Lip(X,Y)}}} \fint_{x+tB_X}\frac{\|f(y)-\Lambda(y)\|_Y^q}{t^{q+1}}\ud y,
$$
where we used the fact that in the above integrand, since $H_{\gamma t^2}f$ is obtained from $f$ by convolution with a probability measure, we have $\|H_{\gamma t^2}f\|_{\Lip(X,Y)}\le \|f\|_{\Lip(X,Y)}$, and consequently also the affine mapping $\Taylor_x^1(H_{\gamma t^2}f)$ has Lipschitz constant at most $\|f\|_{\Lip(X,Y)}$. Therefore~\eqref{eq:our new doro} implies that
\begin{multline}\label{eq:almost intrinsic}
\bigg(\int_{X}\int_0^\infty \inf_{\substack{\Lambda\in \mathscr{A}(X,Y)\\\|\Lambda\|_{\Lip(X,Y)}\le \|f\|_{\Lip(X,Y)}}}  \fint_{x+tB_X}\frac{\|f(y)-\Lambda(y)\|_Y^q}{t^{q+1}}\ud y \ud t \ud x \bigg)^{\frac{1}{q}}\\ \lesssim \sqrt[4]{n}\cdot \mathfrak{m}_q(Y) \sqrt{I_q(X)M(X)} \cdot |\supp(f)|^{\frac{1}{q}}\|f\|_{\Lip(X,Y)}.
\end{multline}

The inequality~\eqref{eq:almost intrinsic} depends on the auxiliary Hilbertian norm $|\cdot|$ on $X$ only through the quantity $I_q(X)M(X)$ that appears on the right hand side of~\eqref{eq:almost intrinsic}, and it is clearly in our interest to choose the Hilbertian structure on $X$ so as to make this quantity as small as possible. Since the definitions of $M(X)$ and $I_q(X)$ in~\eqref{eq:defIqMq} involve averagings, if for some $D\in [1,\infty)$ we have  $\|x\|_X\le|x|\le D\|x\|_X$ for every $x\in X$ then $I_q(X)M(X)\le  D$. By John's theorem~\cite{Joh48}, if $B^n$ is the ellipsoid of maximum volume contained in $B_X$ then  $D\le \sqrt{n}$, so it is always the case that $I_q(X)M(X)\le \sqrt{n}$ for some choice of Hilbertian structure on $X$. Of course, it would be better to choose here the Hilbertian norm $|\cdot|$ so as to minimize $D$, in which case (using a standard differentiation argument~\cite{BL00}) $D$ becomes the  Euclidean distortion $c_2(X)$. So, we always have $I_q(X)M(X)\le c_2(X)\le \sqrt{n}$ for some Euclidean norm $|\cdot|$ on $X$, but it turns out that this estimate is very crude. Firstly, one can improve it (up to constant factors) to the assertion that there exists a Euclidean norm $|\cdot|$ on $X$ for which $I_q(X)M(X)\lesssim_q T_2(X)$, where $T_2(X)\le c_2(X)$ is the Rademacher type $2$ constant of $X$; see Remark~\ref{rem:type 2} below, where the definition of Rademacher type is recalled and this estimate is justified. In terms of the dependence on the dimension $n$,  we have for example $c_2(\ell_\infty^n)=c_2(\ell_1^n)=\sqrt{n}$ while by a direct computation one sees that $I_q(\ell_\infty^n)M(\ell_\infty^n)\asymp_q\sqrt{\log n}$ and $I_q(\ell_1^n)M(\ell_1^n)\asymp_q 1$ (more generally, for $p\in [1,\infty)$ one computes that $I_q(\ell_p^n)M(\ell_p^n)\asymp_{p,q} 1$).\footnote{Here, and throughout the rest of this article, given an integer $n\in \N$ we shall use the nonconventional interpretation of the quantity $\log n$ as being equal to the usual natural logarithm when $n\ge 2$, but equal to $1$ when $n=1$. This is done only for the purpose of ensuring that all the ensuing statements are correct also in the one-dimensional setting without the need to write more cumbersome expressions. Alternatively, one can assume throughout that $n\ge 2$.} Also, if $X$ has a $C$-unconditional basis for some $C\in [1,\infty)$ then $I_q(X)M(X)\lesssim_q C^2\sqrt{\log n}$, as explained in Remark~\ref{rem:unconditional} below.

\begin{conjecture}\label{conj:sqrt log}
For every $n\in \N$ and $q\in [1,\infty)$, every $n$-dimensional normed space $(X,\|\cdot\|_X)$ admits a Hilbertian norm $|\cdot|$ with respect to which  we have $I_q(X)M(X)\lesssim_q \sqrt{\log n}$.
\end{conjecture}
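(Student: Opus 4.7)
My plan combines three standard ingredients from asymptotic convex geometry; the last step is the main obstacle. First, since the uniform probability measure on the convex body $B_X$ is log-concave, Borell's lemma yields $\bigl(\fint_{B_X}\nu(x)^q \ud x\bigr)^{1/q}\lesssim q\fint_{B_X}\nu(x)\ud x$ for every seminorm $\nu:\R^n\to [0,\infty)$. Applied to $\nu(x) = |x|$, this gives $I_q(X)\lesssim_q I_1(X)$, so it suffices to find a Hilbertian norm $|\cdot|$ on $X$ for which $I_1(X)M(X) \lesssim \sqrt{\log n}$.

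Second, I would take $|\cdot|$ to be the Euclidean norm associated with the $\ell$-position of $B_X$, in which the Figiel--Tomczak-Jaegermann / Pisier estimate $M(X)M(X^*)\lesssim \log n$ holds, where $M(X^*)=\fint_{S^{n-1}}\|\sigma\|_{X^*}\ud\sigma$ is (half) the mean width of $B_X$. In polar coordinates,
\[
I_1(X) = \frac{n}{n+1}\cdot \frac{\int_{S^{n-1}}\|\sigma\|_X^{-(n+1)}\ud\sigma}{\int_{S^{n-1}}\|\sigma\|_X^{-n}\ud\sigma}.
\]
If $\|\sigma\|_X$ were approximately $M(X)$ on the entire sphere, one would conclude $I_1(X)\asymp 1/M(X)$, hence $I_1(X)M(X)\asymp 1$, which is \emph{far} stronger than the desired $\sqrt{\log n}$. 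Dvoretzky--Milman concentration does give $\|\sigma\|_X\asymp M(X)$ on a subset of spherical measure $1-e^{-ck(X)}$, with $k(X) \asymp nM(X)^2/b(X)^2$ the Dvoretzky dimension of $X$ (and $b(X)=\sup_{|\sigma|=1}\|\sigma\|_X$).

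The \textbf{main obstacle} is the exceptional set: the integrand $\|\sigma\|_X^{-(n+1)}$ is highly sensitive to small values of $\|\sigma\|_X$, which on the exceptional set can be as small as $1/b(X^*)$. The trivial bound $I_1(X)\le \sup_{x\in B_X}|x|=b(X^*)$ together with the $\ell$-position $MM^*$-estimate only yields $I_1(X)M(X)\lesssim \log n$, short of the conjecture by a factor of $\sqrt{\log n}$. Closing this gap seems to require a Euclidean structure controlling not just $M(X)$ but also the \emph{left tail} of the distribution of $\|\sigma\|_X$ on $S^{n-1}$; plausible candidates include Milman's $M$-position, a position interpolating between the $\ell$-position and the isotropic one, or a variational argument minimizing $I_1(X)M(X)$ directly. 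Alternatively, one might pursue a thin-shell-type estimate for the uniform measure on $B_X$, in the spirit of the central limit theorem for convex bodies, showing that $|x|$ concentrates around a value of order at most $\sqrt{\log n}/M(X)$. As the paper notes, the conjecture is tied to longstanding open problems in convex geometry, so a complete proof likely requires a genuinely new idea.
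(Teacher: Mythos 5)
The statement you were asked to prove is stated in the paper as an open \emph{conjecture}, and the paper itself offers no proof --- only the partial bound~\eqref{eq:current best}, namely $I_q(X)M(X)\lesssim (n\log n)^{2/5}$ for isotropic $X$ with $n\ge q^2$, obtained in Section~\ref{sec:convex geometry} by combining John's theorem, the Giannopoulos--E.~Milman estimate $M(X)\lesssim (n\log n)^{2/5}/I_2(X)$, and Paouris' moment comparison $I_q(X)\lesssim I_2(X)$. You correctly recognize the statement as open and that a genuinely new idea is needed; that diagnosis matches the paper, which explicitly links the isotropic form of the conjecture to the Slicing Problem via $I_q(X)M(X)\gtrsim L_X$ (Remark~\ref{rem:IqMq Lx}), so any proof giving $\sqrt{\log n}$ in isotropic position would imply $L_X\lesssim\sqrt{\log n}$, far beyond the known $L_X\lesssim n^{1/4}$.

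Two comments on the specifics of your outline. The Borell-lemma reduction $I_q(X)\lesssim_q I_1(X)$ is sound. However, the intermediate claim that the trivial bound $I_1(X)\le b(X^*)$ together with the $\ell$-position estimate $M(X)M(X^*)\lesssim\log n$ ``only yields $I_1(X)M(X)\lesssim\log n$'' is not justified: it implicitly uses $b(X^*)\lesssim M(X^*)$, which fails badly in general. For $X=\ell_1^n$ in its standard (essentially $\ell$-) position one has $b(X^*)=b(\ell_\infty^n)=I_\infty(\ell_1^n)\asymp\sqrt{n/\log n}\cdot M(\ell_\infty^n)$, so the trivial bound gives $b(X^*)M(X)\asymp\sqrt{n}$, not $\log n$ --- indeed even the paper's best general bound $(n\log n)^{2/5}$ sits strictly between $\log n$ and $\sqrt{n}$. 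Thus your proposal is off not ``by a factor of $\sqrt{\log n}$'' but by a polynomial in $n$, and the exceptional set issue you identify is the whole game, not a correction term. Your final assessment (that the conjecture is tied to deep open problems and a complete proof requires a new idea) is exactly the position the paper takes.
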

The currently best known upper bound on $I_q(X)M(X)$ in terms of $n=\dim(X)$ occurs when the Hilbertian structure is chosen so as to make $X$ isotropic, where we recall that $X$ is said to be isotropic if the Hilbertian norm $|\cdot|$ satisfies $|B_X|=1$ and there is $L_X\in (0,\infty)$ such that
\begin{equation}\label{eq:def isotropic}
\forall\, y\in \R^n,\qquad \bigg(\int_{B_X} (x\cdot y)^2 \ud x\bigg)^{\frac12}=L_X|y|.
\end{equation}
Every  finite dimensional normed space $X$ admits a unique Hilbertian norm with respect to which it is isotropic. The quantity $L_X$ in~\eqref{eq:def isotropic} is called the isotropic constant of $X$; see the monograph~\cite{BGVV14} for more about isotropicity. In Section~\ref{sec:convex geometry} below we explain how a direct combination of (major) results in convex geometry shows that if $X$ is an $n$-dimensional  isotropic normed space and $q\in [1,\infty)$ then
\begin{equation}\label{eq:current best}
 n\ge q^2\implies I_q(X)M(X)\lesssim (n\log n)^{\frac25}.
\end{equation}

The restriction $n\ge q^2$ in~\eqref{eq:current best} corresponds to the most interesting range of parameters, but in Section~\ref{sec:convex geometry} we also present the currently best known bound when $n\le q^2$; see inequality~\eqref{eq:current best-q} below. We make no claim that~\eqref{eq:current best} is best possible, the main point being that~\eqref{eq:current best} is asymptotically better as $n\to \infty$ than the bound of $\sqrt{n}$ that follows from John's theorem. It is tempting to speculate that the upper bound on $I_q(X)M(X)$ of  Conjecture~\eqref{conj:sqrt log} holds true already when $X$ is isotropic. This refined version of Conjecture~\eqref{conj:sqrt log} seems challenging because, as we explain in Remark~\ref{rem:IqMq Lx} below, we always have $I_q(X)M(X)\gtrsim L_X$, so a positive answer would yield the estimate $L_X\lesssim \sqrt{\log n}$, which would be much stronger than the currently best known~\cite{Kla06} bound $L_X\lesssim \sqrt[4]{n}$ (the longstanding Slicing Problem~\cite{Bou86,Bal88,MP89} asks whether $L_X$ could be bounded from above by a universal constant).

By substituting~\eqref{eq:current best} into~\eqref{eq:almost intrinsic}  we obtain Theorem~\ref{thm:doro intrinsic} below, which is an intrinsic version of Theorem~\ref{thm:main intro}. Of course, any future improvement over~\eqref{eq:current best} (for any Hilbertian structure on $X$) would immediately imply an improved dependence on $n$ in Theorem~\ref{thm:doro intrinsic}.

\begin{theorem}[Intrinsic vector-valued Dorronsoro estimate]\label{thm:doro intrinsic}  Suppose that $q\in [2,\infty)$ and $n\in \N$ satisfy $n\ge q^2$. Let $(X,\|\cdot\|_X)$ and $(Y,\|\cdot\|_Y)$ be Banach spaces that satisfy $\dim(X)=n$ and $\mathfrak{m}_q(Y)<\infty$. Then every compactly supported $1$-Lipschitz function $f:X\to Y$ satisfies
\begin{equation}\label{eq:doro inf intrinsic version}
\bigg(\int_{X}\int_0^\infty \inf_{\substack{\Lambda\in \mathscr{A}(X,Y)\\\|\Lambda\|_{\Lip(X,Y)}\le 1}}  \fint_{x+tB_X}\frac{\|f(y)-\Lambda(y)\|_Y^q}{t^{q+1}}\ud y \ud t \ud x \bigg)^{\frac{1}{q}} \lesssim n^{\frac{9}{20}}\sqrt[5]{\log n}\cdot \mathfrak{m}_q(Y) |\supp(f)|^{\frac{1}{q}}.
\end{equation}
\end{theorem}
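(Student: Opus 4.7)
The plan is to deduce Theorem~\ref{thm:doro intrinsic} from the analytic bound Theorem~\ref{thm:lip doro intro} (equivalently, the already-derived inequality~\eqref{eq:almost intrinsic}) by making a judicious choice of the auxiliary Hilbertian norm $|\cdot|$ on $X$. Since the statement of Theorem~\ref{thm:doro intrinsic} depends only on the Banach-space geometry of $X$ and $Y$ (all quantities appearing in~\eqref{eq:doro inf intrinsic version} are intrinsic, and the inequality is invariant under rescaling the Lebesgue measure on $X$), we are free to choose $|\cdot|$ so as to minimize the Hilbertian-dependent factor $\sqrt{I_q(X)M(X)}$ on the right-hand side of~\eqref{eq:almost intrinsic}. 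The natural choice---and, under the hypothesis $n\ge q^2$, the one for which the sharpest known upper bound on $I_q(X)M(X)$ is available---is the unique Hilbertian norm with respect to which $X$ is isotropic in the sense of~\eqref{eq:def isotropic}.

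With this choice fixed, the argument reduces to substituting known estimates. Applying~\eqref{eq:almost intrinsic} to the compactly supported $1$-Lipschitz function $f:X\to Y$ gives
\begin{equation*}
\bigg(\int_{X}\int_0^\infty \inf_{\substack{\Lambda\in \mathscr{A}(X,Y)\\\|\Lambda\|_{\Lip(X,Y)}\le 1}}  \fint_{x+tB_X}\frac{\|f(y)-\Lambda(y)\|_Y^q}{t^{q+1}}\ud y \ud t \ud x \bigg)^{\frac{1}{q}} \lesssim \sqrt[4]{n}\,\mathfrak{m}_q(Y)\sqrt{I_q(X)M(X)}\,|\supp(f)|^{\frac{1}{q}},
\end{equation*}
and the convex-geometric bound~\eqref{eq:current best} then supplies $I_q(X)M(X)\lesssim (n\log n)^{2/5}$ for the isotropic $X$ under the hypothesis $n\ge q^2$. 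Taking square roots and multiplying by $n^{1/4}$ produces the combined factor
\begin{equation*}
n^{\frac14}\cdot (n\log n)^{\frac15}=n^{\frac14+\frac15}(\log n)^{\frac15}=n^{\frac{9}{20}}\sqrt[5]{\log n},
\end{equation*}
which is precisely the exponent appearing in~\eqref{eq:doro inf intrinsic version}.

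All of the substance of Theorem~\ref{thm:doro intrinsic} is therefore contained in two deeper results that serve as black boxes at this point in the paper: (i) the analytic Dorronsoro-type estimate of Theorem~\ref{thm:lip doro intro}, whose proof will require the new uniformly convex-valued Littlewood--Paley--Stein $\mathcal{G}$-function inequality for the heat semigroup advertised in the abstract; and (ii) the convex-geometric bound~\eqref{eq:current best} for isotropic bodies, whose derivation from major results of asymptotic convex geometry is deferred to Section~\ref{sec:convex geometry}. The ``main obstacle'' for Theorem~\ref{thm:doro intrinsic} itself is simply the recognition that the isotropic Hilbertian structure is the right lift of $X$ to $\R^n$; once that choice is in place, the conclusion follows by routine exponent arithmetic.
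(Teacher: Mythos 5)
Your proposal is correct and matches the paper's own (brief) derivation exactly: the paper states that Theorem~\ref{thm:doro intrinsic} follows by substituting the convex-geometric bound~\eqref{eq:current best} (for the isotropic Hilbertian structure, valid when $n\ge q^2$) into~\eqref{eq:almost intrinsic}, and the exponent arithmetic $\sqrt[4]{n}\cdot(n\log n)^{1/5}=n^{9/20}\sqrt[5]{\log n}$ is as you computed. Your side remark that the constraint $\|\Lambda\|_{\Lip(X,Y)}\le\|f\|_{\Lip(X,Y)}$ in~\eqref{eq:almost intrinsic} is at least as strong as $\|\Lambda\|_{\Lip(X,Y)}\le 1$ when $f$ is $1$-Lipschitz, and the scale-invariance observation justifying the isotropic normalization, are both accurate and useful.
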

The above nomenclature arises from important classical work of Dorronsoro~\cite{Dorro:85}, who obtained Theorem~\eqref{thm:doro intrinsic} when $Y=\R$ (in which case $\mathfrak{m}_q(\R)\asymp 1$ for every $q\ge 2$) and $X=\ell_2^n$, but with much weaker (implicit) dependence on the dimension $n$.   As we shall see in Section~\ref{sec:computations} below, in the special case $X=\ell_2^n$, $Y=\ell_2$ and $q=2$, a more careful analysis yields the validity of~\eqref{eq:doro inf intrinsic version} with the right hand side being dimension independent (in forthcoming work of Danailov and Fefferman~\cite{DF16}, an even sharper result is obtained in this Hilbertian setting, yielding  the precise value of the implicit universal constant).  We do not know if it is possible to obtain a dimension independent version of Theorem~\ref{thm:doro intrinsic} in its full generality, but even if that were possible then it would not influence the statement of Theorem~\ref{thm:main intro} (only the value of the universal constant $c$ will be affected).

In their classical (scalar-valued) form, Dorronsoro  estimates are very influential in several areas, including singular integrals (e.g.~\cite{Jon89}), geometric measure theory (e.g.~\cite{DS93}), local approximation spaces (e.g.~\cite{Triebel:89}), PDE (e.g.~\cite{KMS14}), calculus of variations (e.g.~\cite{KM07}). Our semigroup proof of Theorem~\ref{thm:doro intrinsic} is via a strategy that differs from Dorronsoro's original approach~\cite{Dorro:85} as well as the subsequent approaches of Fefferman~\cite{Fef86}, Jones~\cite{Jon89}, Seeger~\cite{See89}, Triebel~\cite{Triebel:89}, Kristensen--Mingione~\cite{KM07} and Azzam--Schul~\cite{AS12}. Importantly, this semigroup strategy is what makes it possible for us to obtain for the first time the validity of Theorem~\ref{thm:doro intrinsic}  when $Y$ is superreflexive (i.e., $Y$ admits an equivalent uniformly convex norm), thus leading to Theorem~\ref{thm:main intro}. The best previously known result~\cite{HLN} was that a variant of Theorem~\ref{thm:doro intrinsic}  holds true in the more restrictive setting when $Y$ is a UMD Banach space; this was achieved in~\cite{HLN} via a (quite subtle) adaptation of Dorronsoro's original interpolation-based method~\cite{Dorro:85}, and we do not see how to make such an approach apply to superreflexive targets. Theorem~\ref{thm:superreflexive char 1} below shows that the validity of~\eqref{eq:doro inf intrinsic version} for any fixed $n,q,X$ and with any constant multiplying $|\supp(f)|^{1/q}$ in the right hand side of~\eqref{eq:doro inf intrinsic version}  implies that $Y$ is superreflexive, hence Theorem~\ref{thm:doro intrinsic}  as stated above yields a vector-valued Dorronsoro estimate in the maximal possible generality.

\begin{theorem}[Characterization of superreflexivity in terms of a Dorronsoro estimate]\label{thm:superreflexive char 1} The following conditions are equivalent for a Banach space $(Y,\|\cdot\|_Y)$.
\begin{enumerate}
\item $Y$  admits an equivalent uniformly convex norm ($Y$ is superreflexive).
\item There exists $q\in [2,\infty)$ and for every $n\in \N$ there exists $C=C(n,Y)\in (0,\infty)$ such that for every $n$-dimensional normed space $(X,\|\cdot\|_X)$ and every $1$-Lipschitz compactly supported function $f:X\to Y$ we have
    \begin{equation}\label{eq:superreflexiv char}
\int_{X}\int_0^\infty \inf_{\Lambda\in \mathscr{A}(X,Y)} \fint_{x+tB_X}\frac{\|f(y)-\Lambda(y)\|_Y^q}{t^{q+1}}\ud y \ud t \ud x  \le C|\supp(f)|.
\end{equation}
\item There exist $q,C\in [2,\infty)$, $n_0\in \N$ and an $n_0$-dimensional normed space $(X_0,\|\cdot\|_{X_0})$ such that~\eqref{eq:superreflexiv char} holds true for every $1$-Lipschitz compactly supported function $f:X_0\to Y$.
\end{enumerate}
\end{theorem}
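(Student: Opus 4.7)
The equivalence splits into three implications. The implications $(1)\Rightarrow (2)$ and $(2)\Rightarrow (3)$ are the (comparatively) easy direction, while $(3)\Rightarrow (1)$ is the main content.

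For $(1)\Rightarrow (2)$: The plan is to pass to an equivalent norm on $Y$ via Theorem~\ref{thm:pisier renorming} so that $\mathfrak{m}_q(Y)<\infty$ for some $q\in[2,\infty)$, and then to apply Theorem~\ref{thm:lip doro intro} directly (rather than Theorem~\ref{thm:doro intrinsic}, to avoid the restriction $n\ge q^2$). Given $n\in\N$ and an $n$-dimensional normed space $X$, one takes $|\cdot|$ to be the Hilbertian norm whose unit ball is the John ellipsoid of maximal volume inscribed in $B_X$; then $|x|\le \|x\|_X\le \sqrt{n}\,|x|$ for every $x\in X$, from which the definitions~\eqref{eq:defIqMq} yield $I_q(X)M(X)\le \sqrt{n}$. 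Since $\Taylor_x^1(H_{\gamma t^2}f)\in\mathscr{A}(X,Y)$ for every $(x,t)\in X\times (0,\infty)$, the integrand in~\eqref{eq:our new doro} is pointwise at least the corresponding infimum over all affine maps that appears in~\eqref{eq:superreflexiv char}. Taking $q$-th powers in~\eqref{eq:our new doro} and using $\|f\|_{\Lip(X,Y)}\le 1$ yields~\eqref{eq:superreflexiv char} with $C(n,Y)=K^q\lesssim n^{q/2}\mathfrak{m}_q(Y)^q$.

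For $(2)\Rightarrow (3)$: immediate, by choosing any $n_0\in\N$, any $n_0$-dimensional $X_0$, and taking $C=C(n_0,Y)$ from the hypothesis of~(2).

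For $(3)\Rightarrow (1)$: I would argue the contrapositive. Suppose $Y$ is not superreflexive. By Pisier's characterization of superreflexivity in terms of martingale cotype (the converse to Theorem~\ref{thm:pisier inequality}, also due to Pisier~\cite{Pisier:1986}), $Y$ has no finite martingale cotype, so in particular it fails to have martingale cotype $q$ for the specific $q$ appearing in~(3). Hence for every $M>0$ there exists a $Y$-valued dyadic martingale $(M_k)_{k=0}^N$ on $([0,1],dx)$ such that
$$\sum_{k=0}^{N-1}\|M_{k+1}-M_k\|_{L_q([0,1];Y)}^q>M\cdot\sup_{0\le k\le N}\|M_k\|_{L_q([0,1];Y)}^q.$$
The plan is to translate such a martingale into a compactly supported 1-Lipschitz function $f=f_M:X_0\to Y$ with arbitrarily large Dorronsoro functional, contradicting~\eqref{eq:superreflexiv char} once $M$ is chosen large enough. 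Fix a unit vector $e\in X_0$ and a smooth bump $\chi$ on the $\|\cdot\|_{X_0}$-orthogonal hyperplane, and build $f(x)=g(x\cdot e^*)\chi(P_{e^{\perp}}x)$ where $g:\R\to Y$ is a carefully calibrated multiscale Lipschitz interpolation of the martingale along dyadic intervals. The inner infimum $\inf_{\Lambda}\fint_{x+tB_{X_0}}\|f-\Lambda\|_Y^q$ at scale $t\sim 2^{-k}$ and spatial location in the corresponding dyadic atom is lower bounded by $\|d_k\|_Y^q$ at that atom (up to constants depending on $\chi$ and the geometry of $X_0$), so summing over scales the Dorronsoro integral of $f$ dominates $\sum_k \|d_k\|_{L_q}^q$, which exceeds $M$ times a bounded quantity.

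The principal obstacle is the calibration between the Lipschitz normalization and the martingale data: the naive choice $f=$ mollified $M_N$ forces $\|d_k\|_\infty\lesssim 2^{-k}$, so $\sum\|d_k\|_{L_q}^q\lesssim\sum 2^{-kq}=O(1)$ and no unboundedness is revealed. The standard workarounds are either a multiscale construction placing the oscillations at different scales in spatially disjoint strips (so the Lipschitz constant is governed scale-by-scale rather than globally), or more elegantly recasting~\eqref{eq:superreflexiv char} as a vector-valued Littlewood--Paley--Stein $\mathcal{G}$-function estimate for the heat semigroup and invoking the equivalence between such estimates and martingale cotype $q$ of $Y$, established in the Poisson case by Mart\'\i nez--Torrea--Xu~\cite{MTX} and provided in the heat case by the new $\mathcal{G}$-function inequality of the present paper. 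Either route concludes that $Y$ must be superreflexive.
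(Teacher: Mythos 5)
Your argument for $(1)\Rightarrow(2)$ is correct and matches the paper's strategy: renorm via Theorem~\ref{thm:pisier renorming}, place $X$ in John position so that $I_q(X)M(X)\le\sqrt{n}$, apply Theorem~\ref{thm:lip doro intro}, and observe that $\Taylor_x^1(H_{\gamma t^2}f)$ is one of the affine maps over which the infimum in~\eqref{eq:superreflexiv char} is taken. The implication $(2)\Rightarrow(3)$ is indeed trivial.

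Your $(3)\Rightarrow(1)$, however, has a genuine gap: neither of the two workarounds you propose is carried out, and the ``more elegant'' one does not work as stated. The Dorronsoro estimate~\eqref{eq:superreflexiv char} involves an infimum over \emph{all} affine mappings $\Lambda$, and this cannot simply be ``recast'' as a Littlewood--Paley--Stein $\mathcal{G}$-function estimate. The $\mathcal{G}$-function estimate concerns a \emph{specific} operator (a time or spatial derivative of the heat evolute of $f$), whereas~\eqref{eq:superreflexiv char} controls only the infimum, which a priori could be attained far from any heat evolute and therefore gives no information about $\partial_t H_t f$ or $\nabla H_t f$. Moreover, the two inequalities are normalized differently: the Dorronsoro bound uses $\|f\|_{\Lip(X,Y)}\le 1$ and $|\supp(f)|$, whereas the LPS inequality~\eqref{eq:MTX quote} uses $\|f\|_{L_q(\R^n;Y)}$. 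Passing between these normalizations requires a substantive argument you do not supply. Your other suggested route (a multiscale strip construction from a bad martingale) is left entirely unexecuted; you correctly diagnose the calibration difficulty but do not resolve it.

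The paper's $(3)\Rightarrow(1)$ argument is substantially softer and sidesteps both of your proposed constructions. From~\eqref{eq:superreflexiv char} it deduces a local $L_q$ affine approximation estimate as in Theorem~\ref{thm:local doro}; after reducing to $X_0=\ell_2^{n_0}$ via John's theorem (permissible because the constants in~(3) are allowed to depend on $n_0$ and $X_0$), the projection lemmas of~\cite{HLN} furnish the required Lipschitz bound on the approximating $\Lambda$. This yields $r_q^{X_0\to Y}(\delta)\ge\exp(-K/\delta^q)$, which upgrades to $r^{X_0\to Y}(\e)>0$ for all $\e\in(0,1/2]$ by the $L_q\to L_\infty$ argument of Section~\ref{sec:local}. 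The decisive final step is an application of Theorem~\ref{thm:BJLPS quote} (Bates--Johnson--Lindenstrauss--Preiss--Schechtman): the uniform approximation by affine property implies superreflexivity. This external black box is what carries the entire weight of $(3)\Rightarrow(1)$ in the paper, and it is precisely what your proposal never invokes and would otherwise have to replicate by hand.
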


It is natural to ask whether or not  one could refine Theorem~\ref{thm:superreflexive char 1} so as to yield a characterization of those Banach spaces $(Y,\|\cdot\|_Y)$ that admit an equivalent norm whose modulus of uniform convexity is of power type $q$, or equivalently that $Y$ has martingale cotype $q$.

\begin{question}\label{Q:deduce q conv}
Does the validity of~\eqref{eq:superreflexiv char} imply that $\mathfrak{m}_q(Y)<\infty$?
\end{question}
While we did not dedicate much effort to try to answer Question~\ref{Q:deduce q conv}, some partial results are obtained in Section~\ref{sec:deduce cotype}  below, including the assertion that if $Y$ is a Banach lattice that satisfies~\eqref{eq:superreflexiv char} then $\mathfrak{m}_{q+\e}(Y)<\infty$ for every $\e\in (0,\infty)$.

\begin{remark}
The literature also contains~\cite{Dorro:85,See89,Triebel:89} Dorronsoro estimates corresponding to local approximation by higher degree polynomials rather than by degree $1$ polynomials as in~\eqref{eq:doro inf intrinsic version}. We made no attempt to study such extensions in our setting, since the goal of the present article is the geometric application of Theorem~\ref{thm:main intro}. Nevertheless,  an inspection of our proofs reveals that they do yield mutatis mutandis vector-valued Dorronsoro estimates for local approximation by polynomials of any degree of sufficiently smooth functions with values in uniformly convex targets.
\end{remark}

\subsection{A local Dorronsoro estimate and $L_q$ affine approximation}\label{sec:local} To explain the link between Theorem~\ref{thm:main intro} and Theorem~\ref{thm:lip doro intro}, note first that Theorem~\ref{eq:superreflexiv char}  deals with functions that are defined on the unit ball $B_X$, while  Theorem~\ref{thm:lip doro intro} deals with functions that are defined globally on all of $\R^n$. So, in order to prove  Theorem~\ref{thm:main intro} we shall first establish Theorem~\ref{thm:local doro} below, which is a localized version of Theorem~\ref{thm:lip doro intro}. To state it, it will be notationally convenient (and harmless) to slightly abuse (but only when $n=1$) the notation for averages that was introduced in~\eqref{eq:fint def} as follows. Given $a,A\in (0,\infty)$ with $a<A$ and $\psi:\R\to \R$ such that the mapping $\rho\mapsto \psi(\rho)/\rho$ is in $L_1([a,A])$, denote the average of $\psi$ with respect to the measure $\frac{\mathrm{d}\rho}{\rho}$ over the interval $[a,A]$  by $\fint_a^A \psi(\rho) \frac{\ud\rho}{\rho}=\frac{1}{\log(A/a)}\int_a^A\psi(\rho) \frac{\ud\rho}{\rho}.$

\begin{theorem}[Local vector-valued Dorronsoro estimate]\label{thm:local doro} There is a universal constant $c\in (0,1/4)$ with the following properties.  Suppose that $q\in [2,\infty)$, $n\in \N$, and that $(X,\|\cdot\|_X)$ and $(Y,\|\cdot\|_Y)$ are Banach spaces that satisfy $\dim(X)=n$ and $\mathfrak{m}_q(Y)<\infty$. Let $|\cdot|$ be any Hilbertian norm on $X$, thus identifying $X$ with $\R^n$. Let $K\in (0,\infty)$ be defined as in~\eqref{eq:def gamma}, and define also
\begin{equation}\label{eq:def T}
T\eqdef \frac{c}{n^{\frac54}\sqrt{I_q(X)M(X)\log n}}.
\end{equation}
Then $T\le 1/(2n)$. Moreover, for every $1$-Lipschitz function $f:B_X\to Y$ and every $r\in (0,T^2]$,
\begin{equation}\label{eq:desired local}
\fint_r^{T} \bigg(\fint_{\left(1-\frac{1}{2n}\right)B_X}\inf_{\substack{\Lambda\in \mathscr{A}(X,Y)\\\|\Lambda\|_{\Lip(X,Y)}\le 2}}\frac{\fint_{x+\rho B_X}\|f(y)-\Lambda(y)\|_Y^q}{\rho^{q}}\ud y\ud x\bigg) \frac{\ud \rho}{\rho} \le \frac{(9Kn)^q}{|\log r|}.
\end{equation}
\end{theorem}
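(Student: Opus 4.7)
The plan is to derive Theorem~\ref{thm:local doro} from Theorem~\ref{thm:lip doro intro} by extending $f:B_X\to Y$ to a compactly supported function on $\R^n$, applying the global estimate, and restricting the outer integrations to the region where the extension agrees with $f$. After translating so that $f(0)=0$ (which leaves all relevant quantities invariant), I have $\|f\|_{L_\infty(B_X;Y)}\le 1$, and I would set
\begin{equation*}
\tilde f(x)\eqdef f\bigl(R(x)\bigr)\eta(x),\qquad R(x)\eqdef \frac{x}{\max(1,\|x\|_X)},
\end{equation*}
with $\eta:\R^n\to[0,1]$ a Lipschitz cutoff equal to $1$ on $B_X$, vanishing outside $(1+1/n)B_X$, and satisfying $\|\eta\|_{\Lip(X,\R)}\le n$. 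A direct check shows that $R$ is $2$-Lipschitz on $(X,\|\cdot\|_X)$, whence $\tilde f\equiv f$ on $B_X$, $\|\tilde f\|_{\Lip(X,Y)}\le n+2$, and $|\supp\tilde f|\le (1+1/n)^n|B_X|\le e|B_X|$. The auxiliary inequality $T\le 1/(2n)$ is verified by invoking $I_q(X)M(X)\gtrsim L_X\gtrsim 1$ (Remark~\ref{rem:IqMq Lx}) together with $\log n\ge 1$, which reduces the claim to $T\le c/n^{5/4}$ for a sufficiently small universal~$c$.

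Next I would apply Theorem~\ref{thm:lip doro intro} to $\tilde f$ and restrict the outer integrations to $x\in(1-1/(2n))B_X$ and $\rho\in[r,T]$. Since $\|x\|_X+\rho\le 1$ in this regime, the ball $x+\rho B_X$ lies inside $B_X$, where $\tilde f\equiv f$; rewriting $\rho^{-(q+1)}\ud\rho=\rho^{-q}\cdot\ud\rho/\rho$ then produces
\begin{equation*}
\int_r^T\int_{(1-1/(2n))B_X}\fint_{x+\rho B_X}\frac{\|f(y)-\Lambda_{x,\rho}(y)\|_Y^q}{\rho^q}\ud y\ud x\frac{\ud\rho}{\rho}\lesssim (Kn)^q|B_X|,
\end{equation*}
where $\Lambda_{x,\rho}\eqdef\Taylor_x^1(H_{\gamma\rho^2}\tilde f)$. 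Dividing by $|(1-1/(2n))B_X|\ge|B_X|/2$ (Bernoulli's inequality) and by $\log(T/r)\ge |\log r|/2$ (a consequence of $r\le T^2$), and then taking the infimum inside the integrand, would give exactly~\eqref{eq:desired local} \emph{provided} that each $\Lambda_{x,\rho}$ is admissible in the infimum, namely $\|\Lambda_{x,\rho}\|_{\Lip(X,Y)}\le 2$.

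This Lipschitz bound on $\Lambda_{x,\rho}$ is the main obstacle, since a priori one only has $\|\Lambda_{x,\rho}\|_\Lip\le \|\tilde f\|_\Lip\le n+2$. I would sharpen it by writing
\begin{equation*}
\nabla H_{\gamma\rho^2}\tilde f(x)=\Exp\bigl[\nabla\tilde f(x-\sqrt{2\gamma}\rho\,G)\bigr]
\end{equation*}
for $G$ a standard Gaussian on $(\R^n,|\cdot|)$, and splitting the expectation according to whether its argument lies in $B_X$ (where Rademacher's theorem gives $\|\nabla\tilde f\|_{X\to Y}\le 1$ almost everywhere) or in the shell $(1+1/n)B_X\setminus B_X$ (where only $\|\nabla\tilde f\|_{X\to Y}\le n+2$ is available). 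This yields $\|\Lambda_{x,\rho}\|_\Lip\le 1+(n+2)\Pr\bigl[\sqrt{2\gamma}\rho\|G\|_X>1/(2n)\bigr]$. Combining the mean-width identity $\Exp\|G\|_X\asymp \sqrt n\,M(X)$ with Gaussian concentration of the Lipschitz functional $g\mapsto \|g\|_X$ (Lipschitz constant $D\eqdef \sup_{|y|=1}\|y\|_X$ with respect to $|\cdot|$), and exploiting the precise form~\eqref{eq:def T} of $T$ --- in which the factor $\sqrt{\log n}$ has been calibrated for exactly this purpose --- would render this escape probability at most $1/(n+2)$, so that $\|\Lambda_{x,\rho}\|_\Lip\le 2$ as required. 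The final numerical constant $9$ in $(9Kn)^q$ is a routine bookkeeping artifact combining the factor $1/2$ from the volume and logarithmic losses, the extension factor $(n+2)/n$, and the implicit constant from Theorem~\ref{thm:lip doro intro}.
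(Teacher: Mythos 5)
Your proposal follows the paper's proof essentially step for step: the extension you define via the radial retraction and cutoff agrees pointwise with the paper's $F(x)=\max\{0,n+1-n\|x\|_X\}f(x/\|x\|_X)$ on all of $\R^n$; you restrict the integrals to $x\in(1-\tfrac{1}{2n})B_X$ and $\rho\in[r,T]$ exactly as the paper does; and you control $\|\Taylor_x^1(H_{\gamma\rho^2}\tilde f)\|_{\Lip(X,Y)}$ by the same Gaussian splitting argument (the paper packages this step as Lemma~\ref{lem:lip of heat}, derived via Markov with an optimized moment exponent rather than Gaussian concentration, but the estimate and the role of the $\sqrt{\log n}$ factor in $T$ are the same). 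One small slip: your justification of $T\le 1/(2n)$ cites Remark~\ref{rem:IqMq Lx}, which is stated only for the isotropic position; for an arbitrary Hilbertian norm the correct reference is Corollary~\ref{coro:lower product}, which gives $I_q(X)M(X)\ge\bigl(\tfrac{n}{n+q}\bigr)^{1/q}\ge\tfrac12$ and is what the paper uses.
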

The assertion of Theorem~\ref{thm:local doro}  that $T\le 1/(2n)$ is needed only in order to make the integrals that appear in the left hand side of~\eqref{eq:desired local} well-defined, ensuring that for every $x\in (1-1/(2n))B_X$ and $\rho\in [r,T]$, every point $y$ in the ball $x+\rho B_X$ is also in $B_X$, i.e., $y$ is in the domain of $f$  and the integrand makes sense.  This upper bound on $T$ follows automatically from the restriction $c\le 1/4$, since it is always the case that $I_q(X)M(X)\ge 1/2$, as explained in  Corollary~\ref{coro:lower product} below.  The heart of the matter is therefore to obtain the estimate~\eqref{eq:desired local}. See Section~\ref{sec:assuming} below for the deduction of Theorem~\ref{thm:local doro} from Theorem~\ref{thm:lip doro intro}, where the additional information  in Theorem~\ref{thm:lip doro intro} that the approximating affine function $\Lambda$ is actually a heat evolute is used to show that $\Lambda$ is  $2$-Lipschitz, a property that we shall soon use crucially to deduce Theorem~\ref{thm:main intro}.

The deduction of Theorem~\ref{thm:main intro} from Theorem~\ref{thm:local doro}  is quick. Suppose that $f:B_X\to Y$ is $1$-Lipschitz and fix $\d\in (0,1/2)$. Let $C\in [9,\infty)$ be a (large enough) universal constant ensuring that if we define $r=\exp(-(CKn/\d)^q)$ then $r\le T^2$, where $K$ is defined in~\eqref{eq:def gamma}  and $T$ is defined in~\eqref{eq:def T}; the existence of such a universal constant follows immediately from the fact that $I_q(X)M(X)\ge 1/2$ and the definitions of $K$ and $T$. Now, by~\eqref{eq:desired local} there exists a radius  $\rho\ge r$, a point $x\in B_X$ with $x+\rho B_X\subset B_X$, and an affine mapping $\Lambda:X\to Y$ with $\|\Lambda\|_{\Lip(X,Y)}\le 2$ such that
\begin{equation}\label{eq:Lq approx deduced}
\bigg(\fint_{x+\rho B_X}\|f(y)-\Lambda(y)\|_Y^q\ud y\bigg)^{\frac{1}{q}}\le \d \rho.
\end{equation}

Next, the (simple) argument of~\cite[Section~2.1]{HLN} shows that given $\e\in (0,1/2)$, if the $L_q$-closeness of $f$ to $\Lambda$ that appears in~\eqref{eq:Lq approx deduced} is exponentially small in $n$  then necessarily $\|f(y)-\Lambda(y)\|_Y\le \e\rho$ for {\em every} $y\in x+\rho B_X$. Specifically, this holds true if $\d=(\eta \e)^{1+n/q}$ for a sufficiently small universal constant $\eta\in (0,1)$. Briefly, the reason for this fact is that since $\|f\|_{\Lip(B_X,Y)}\le 1$ and $\|\Lambda\|_{\Lip(X,Y)}\le 2$ we have $\|f-\Lambda\|_{\Lip(B_X,Y)}\le 3$, and consequently if $\|f(y_0)-\Lambda(y_0)\|_Y$ were larger than $\e\rho$ for some $y_0\in x+\rho B_X$ then it would follow that $\|f(y)-\Lambda(y)\|_Y$ is larger than a constant multiple of $\e\rho$ on a sub-ball of $x+\rho B_X$ of radius that is at least a constant multiple of $\e\rho$, thus making the left hand side of~\eqref{eq:Lq approx deduced} be greater than $(\eta\e)^{1+n/q}\rho$ for some universal constant $\eta\in (0,1)$. So, by choosing $\d=(\eta \e)^{1+n/q}$, recalling that we defined $r=\exp(-(CKn/\d)^q)$ and recalling also the definition of  $K$ in~\eqref{eq:def gamma}, if we set $\mathfrak{a}=\eta/(C\kappa)$ (with $\kappa$ being the universal constant of Theorem~\ref{thm:lip doro intro})  we obtain the following refined version of Theorem~\ref{thm:main intro}.

\begin{corollary}\label{cor:refined main} There exist universal constant $\mathfrak{a}\in (0,1)$  such that for every $q\in [2,\infty)$, $n\in \N$, and every two Banach spaces $(X,\|\cdot\|_X)$ and $(Y,\|\cdot\|_Y)$ that satisfy $\dim(X)=n$ and $\mathfrak{m}_q(Y)<\infty$, if $|\cdot|$ is any Hilbertian norm on $X$ and $\e\in (0,1/2)$ then
$$
r^{X\to Y}(\e)\ge \exp\left(-\frac{\left(  n^{\frac54} \sqrt{I_q(X)M(X)}\cdot \mathfrak{m}_q(Y)\right)^q}{(\mathfrak{a}\e)^{n+q}}\right).
$$
Consequently, since an appropriate choice of the Hilbertian norm $|\cdot|$ ensures that $I_q(X)M(X)\le \sqrt{n}$,  for every $\e\in (0,1/2)$  we have
\begin{equation}\label{eq:our better}
r^{X\to Y}(\e)\ge  \exp\left(-\frac{n^{\frac{3q}{2}} \mathfrak{m}_q(Y)^q  }{(\mathfrak{a}\e)^{n+q}}\right).
\end{equation}
\end{corollary}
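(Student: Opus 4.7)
The plan is to derive Corollary~\ref{cor:refined main} directly from the local Dorronsoro estimate in Theorem~\ref{thm:local doro} via pigeonhole followed by a standard $L_q\to L_\infty$ upgrade that exploits the Lipschitz bound on the residual. The second assertion~\eqref{eq:our better} will follow from the first by selecting the Hilbertian norm $|\cdot|$ for which the John ellipsoid of $B_X$ is $B^n$, forcing $I_q(X)M(X)\le\sqrt n$ as noted in the paragraph preceding the statement; so only the first inequality needs real work.

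Fix a $1$-Lipschitz $f\colon B_X\to Y$ and $\e\in(0,1/2)$, and set $\d\eqdef(\eta\e)^{1+n/q}\in(0,1/2)$, where $\eta\in(0,1)$ is a small universal constant to be chosen in the final step. Put $r\eqdef\exp(-(CKn/\d)^q)$, where $K$ is as in~\eqref{eq:def gamma} and $C\ge 9$ is a universal constant chosen large enough that $r\le T^2$ with $T$ as in~\eqref{eq:def T}; this choice is available because $I_q(X)M(X)\ge 1/2$ by Corollary~\ref{coro:lower product}. Feeding $f$ into Theorem~\ref{thm:local doro} and using $(9Kn)^q/|\log r|=(9/C)^q\d^q\le\d^q$, I would average~\eqref{eq:desired local} over $\rho\in[r,T]$ and $x\in(1-\tfrac{1}{2n})B_X$. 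Since $T\le 1/(2n)$ guarantees $x+\rho B_X\subset B_X$ for every such pair, pigeonholing on both averages and the infimum yields a radius $\rho\in[r,T]$, a point $x\in(1-\tfrac{1}{2n})B_X$, and an affine $\Lambda\colon X\to Y$ with $\|\Lambda\|_{\Lip(X,Y)}\le 2$ such that
\[
\bigg(\fint_{x+\rho B_X}\|f(y)-\Lambda(y)\|_Y^q\,\ud y\bigg)^{1/q}\le\d\rho.
\]

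It remains to upgrade this averaged smallness to a uniform bound, which is the step that consumes the a~priori wasteful exponent $1+n/q$ in $\d$. The residual $g\eqdef f-\Lambda$ is $3$-Lipschitz on $x+\rho B_X$, since $f$ is $1$-Lipschitz on $B_X$ and $\Lambda$ is $2$-Lipschitz on $X$. Were $\|g(y_0)\|_Y>\e\rho$ at some $y_0\in x+\rho B_X$, the Lipschitz control would give $\|g(y)\|_Y>\e\rho/2$ on the intersection $(x+\rho B_X)\cap(y_0+(\e\rho/6)B_X)$, a convex body of relative volume $\gtrsim\e^n$ in $x+\rho B_X$ by an elementary Brunn--Minkowski style estimate; integrating this forces $\fint_{x+\rho B_X}\|g\|_Y^q\,\ud y\gtrsim\e^{n+q}\rho^q$, contradicting the previous display once $\eta$ is chosen small enough (independently of $n,q,X,Y$). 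Hence $\|f(y)-\Lambda(y)\|_Y\le\e\rho$ for every $y\in x+\rho B_X$, so that $r^{X\to Y}(\e)\ge\rho\ge r$. Inserting the definition of $K$ from~\eqref{eq:def gamma} together with $\d=(\eta\e)^{1+n/q}$ into $-\log r=(CKn/\d)^q$, and setting $\mathfrak{a}\eqdef\eta/(C\kappa)$ with $\kappa$ the universal constant of Theorem~\ref{thm:lip doro intro}, delivers the claimed bound. All nontrivial content has been absorbed into Theorem~\ref{thm:local doro}; the only care required here is to match the $\e^{-n/q}$ loss in the $L_q\to L_\infty$ upgrade against the $\d^{-q}$ factor in $(CKn/\d)^q$, so that the final exponent of $\e$ in~\eqref{eq:our better} lands at exactly $n+q$.
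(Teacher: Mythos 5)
Your proposal is correct and follows essentially the same path as the paper: apply Theorem~\ref{thm:local doro} with $\d=(\eta\e)^{1+n/q}$ and $r=\exp(-(CKn/\d)^q)$, pigeonhole to obtain an $L_q$-close affine approximant $\Lambda$ with $\|\Lambda\|_{\Lip(X,Y)}\le 2$, then use the $3$-Lipschitz bound on $f-\Lambda$ together with a volume estimate on a sub-ball of radius $\asymp\e\rho$ to upgrade to an $L_\infty$ bound, and finally deduce~\eqref{eq:our better} from the John-position bound $I_q(X)M(X)\le\sqrt n$. The bookkeeping (including $\mathfrak{a}=\eta/(C\kappa)$ and the check that $\eta$ can be chosen universally despite the $\e^n$ volume loss) matches the paper's deduction.
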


\begin{remark}\label{rem:use lip Lambda} Observe that the fact that $\|\Lambda\|_{\Lip(X,Y)}\le 2$ in~\eqref{eq:Lq approx deduced} was used crucially in the above deduction of Corollary~\ref{cor:refined main} from Theorem~\ref{thm:local doro}. Any universal constant in place of $2$ would work just as well for the purpose of this deduction, but an upper bound on $\|\Lambda\|_{\Lip(X,Y)}$ that grows to $\infty$ with $n$ would result in an asymptotically weaker lower bound in Theorem~\ref{thm:main intro}.
\end{remark}

\begin{remark}\label{rem:Lq approx} In the setting of Theorem~\ref{thm:local doro}, notation for  an $L_q$ affine approximation modulus was introduced as follows in~\cite[Definition~1]{HLN}. Given $\d\in (0,1)$ let $r_q^{X\to Y}(\d)$ be the supremum over those $r\in [0,1]$ such that for every $1$-Lipschitz function $f:B_X\to Y$ there exists a radius  $\rho\ge r$, a point $x\in B_X$ with $x+\rho B_X\subset B_X$, and an affine mapping $\Lambda:X\to Y$ with $\|\Lambda\|_{\Lip(X,Y)}\le 3$ such that the estimate~\eqref{eq:Lq approx deduced} is satisfied. The constant $3$ of the definition of this modulus was chosen in~\cite{HLN} essentially arbitrarily (any universal constant that is at least $1+2\d$ would work equally well for the purposes of~\cite{HLN}). The above argument shows that in the setting of Theorem~\ref{thm:local doro} we have
\begin{equation*}\label{eq:rq UC}
\forall\, \d\in (0,1),\qquad r_q^{X\to Y}(\d)\ge \exp\left(-\frac{(CKn)^q}{\d^q}\right).
\end{equation*}
 As we mentioned earlier in the Introduction, in Section~\ref{sec:computations} below we show that if $X=\ell_2^n$, $Y=\ell_2$ and $q=2$ then~\eqref{eq:our new doro} holds true with $K$ replaced by a universal constant. This translates to the validity of~\eqref{eq:desired local} with $K$ replaced by a universal constant. By reasoning as above, we therefore deduce that there exists a universal constant $\mathfrak{c}\in (1,\infty)$ for which the following lower bound holds true.
 \begin{equation}\label{eq:r2 hilbert}
\forall\, \d\in (0,1),\qquad r_2^{\ell_2^n\to \ell_2}(\d)\ge \exp\left(-\frac{\mathfrak{c} n^2}{\d^2}\right).
 \end{equation}
This improves over the bound $r_2^{\ell_2^n\to \ell_2}(\d)\ge \exp(-\mathfrak{c} (n\log n)^2/\d^2)$ of~\cite{HLN}. The modulus $r_2^{\ell_2^n\to \ell_2}(\cdot)$ is currently not known to have geometric applications, but it is a natural Hilbertian  quantity and it would be of interest to determine its asymptotic behavior; see also Question~9 in~\cite{HLN}.
\end{remark}

\begin{remark} It is instructive  to examine the reason for the doubly exponential dependence on $n$ in Theorem~\ref{thm:main intro}. Continuing with the notation and hypotheses of Theorem~\ref{thm:local doro}, the above reasoning  proceeded in two steps. The first step deduced  the $L_q$ approximation~\eqref{eq:Lq approx deduced} on a sub-ball of radius at least $\exp(-(CKn/\d)^q)$. The second step argued that if $\d=(\eta \e)^{1+n/q}$ (with $\eta\in (0,1)$ being a universal constant) then the average $(\d \rho)$-closeness to $\Lambda$ that is exhibited in~\eqref{eq:Lq approx deduced} automatically ``upgrades" to $(\e\rho)$-closeness in $L_\infty(x+\rho B_X;Y)$. This second step trivially requires $\d$ to decay like $\e^{c_qn}$: Take for example $\Lambda=0$ and $f$ to be a real-valued $1$-Lipschitz function that vanishes everywhere except for a ball of radius $\e \rho$ on which its maximal value equals $2\e\rho$. A substitution of an exponentially decaying $\d$ into $r_q^{X\to Y}(\d)$ would at best result in a doubly exponential lower bound on $r^{X\to Y}(\e)$ if $r_q^{X\to Y}(\d)$ decays at least exponentially fast in $-1/\d^{c_q}$. This must indeed be the case in general, since there exist examples of spaces $X,Y$ that satisfy the assumptions of Theorem~\ref{thm:local doro} yet $r^{X\to Y}(\d)\le \exp(-(c/\d)^q)$, where $c\in (0,1)$ is a universal constant. Specifically, by~\cite[Lemma~16]{HLN} this holds for $X=\ell_\infty^n$ and $Y=\ell_q$. So, in order to obtain a better bound in Theorem~\ref{thm:main intro} it seems that one should somehow argue about $L_\infty$ bounds directly, despite the fact that in our setting the only assumption on $Y$  is that it has martingale cotype $q$, which is by~\cite{MTX} equivalent to an $L_q$ Littlewood--Paley inequality. This could be viewed as an indication that perhaps Theorem~\ref{thm:main intro} cannot be improved in the stated full generality, though we leave this very interesting question open.
\end{remark}

\subsection{Littlewood--Paley--Stein theory}\label{sec:LPS} An elegant and useful theorem of Mart\'inez, Torrea and Xu~\cite{MTX} asserts that a Banach space $(Y,\|\cdot\|_Y)$ has martingale cotype $q\ge 2$ if and only if we have
\begin{equation}\label{eq:MTX quote}
\forall\, n\in \N,\ \forall\, f\in L_q(\R^n;Y),\qquad  \|\mathcal{G}_qf\|_{L_q(\R^n;Y)}\lesssim_{n,Y} \|f\|_{L_q(\R^n;Y)},
\end{equation}
where $\mathcal{G}_qf:\R^n\to Y$ is the (generalized)  Littlewood--Paley--Stein $\mathcal{G}$-function which is defined by
\begin{equation*}\label{eq:def G function}
\forall\, x\in \R^n,\qquad \mathcal{G}_qf(x)\eqdef \bigg(\int_0^\infty \left\|t\partial_t P_tf(x)\right\|_Y^q \frac{\ud t}{t}\bigg)^{\frac{1}{q}},
\end{equation*}
with $\{P_t\}_{t\in (0,\infty)}$ being the Poisson semigroup. More generally, the Littlewood--Paley--Stein theory of~\cite{MTX} applies to abstract semigroups provided that they are so-called {\em subordinated diffusion semigroups}, of which the heat semigroup (in contrast to Poisson) is {\em not} an example.

Problem~2 of~\cite{MTX} asks whether or not~\eqref{eq:MTX quote} holds true when $Y$ has martingale cotype $q$ for any diffusion semigroup in the sense of Stein~\cite{Stein:topics} (with the implicit constant in~\eqref{eq:MTX quote} being allowed to also depend on the semigroup), in which case~\eqref{eq:MTX quote} would apply to the heat semigroup as well. This question remains open in full generality, with the best known partial result being due to~\cite{MTX}, where it is shown that the answer is positive if $Y$ is a Banach lattice of martingale cotype $q$. Here we obtain the following positive answer in the important special case of the heat semigroup.

\begin{theorem}[Temporal Littlewood--Paley--Stein inequality for the heat semigroup]\label{thm:LPtemporal} Fix $q\in [2,\infty)$ and $n\in \N$. Suppose that $(Y,\|\cdot\|_Y)$ is a Banach space that admits an equivalent norm with modulus of uniform convexity of power type $q$. Then for every $f\in L_q(\R^n;Y)$ we have
\begin{equation}\label{eq:temporal heat intro}
  \bigg(\int_0^\infty\Norm{t\partial_t H_t f}{L_q(\R^n;Y)}^q\frac{\ud t}{t}\bigg)^{\frac{1}{q}}
  \lesssim\sqrt{n}\cdot\mathfrak{m}_{q}(Y)\Norm{f}{L_q(\R^n;Y)}.
\end{equation}
\end{theorem}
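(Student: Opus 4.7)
The plan is to reduce the temporal $G$-function estimate to a dimension-free vector-valued square-function inequality, and then prove the latter by a Pisier-type martingale argument; the factor $\sqrt{n}$ will emerge from a single Cauchy--Schwarz step on the sum over coordinate directions.

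Starting from $\partial_t H_t = \Delta H_t = \sum_{i=1}^n \partial_i^2 H_t$, combined with the semigroup identity $H_t = H_{t/2}\circ H_{t/2}$ and the commutativity of $\partial_i$ with $H_s$, one obtains
\begin{equation*}
t\,\partial_t H_t f = \sum_{i=1}^n T_{t,i}^{\circ 2}f,\qquad T_{t,i}\eqdef\sqrt{t}\,\partial_i H_{t/2}.
\end{equation*}
The convolution kernel of $T_{t,i}$ equals $-(x_i/\sqrt{t})h_{t/2}(x)$, whose $L_1(\R^n)$-norm is the universal constant $\sqrt{2/\pi}$; hence $T_{t,i}$ is bounded on $L_q(\R^n;Y)$ with norm $\le\sqrt{2/\pi}$, uniformly in $(i,t)$. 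The triangle inequality in $Y$ followed by Cauchy--Schwarz on the $n$ summands then gives
\begin{equation*}
\|t\partial_t H_t f(x)\|_Y \;\le\; \sqrt{n}\,\Bigl(\sum_{i=1}^n \|T_{t,i}^{\circ 2}f(x)\|_Y^2\Bigr)^{1/2}.
\end{equation*}
Raising to the $q$-th power, integrating against $\mathrm{d}x\,\mathrm{d}t/t$, and taking the $q$-th root, the task is reduced to the dimension-free square-function bound
\begin{equation*}
\bigg(\int_0^\infty\!\!\int_{\R^n}\Bigl(\sum_{i=1}^n\|T_{t,i}^{\circ 2}f(x)\|_Y^2\Bigr)^{q/2}\mathrm{d}x\,\frac{\mathrm{d}t}{t}\bigg)^{1/q}\;\lesssim\;\mathfrak{m}_q(Y)\,\|f\|_{L_q(\R^n;Y)},
\end{equation*}
which in the Hilbertian $q=2$ setting follows from Plancherel using $\sum_i\xi_i^4\le|\xi|^4$, confirming that no further dimensional loss should appear.

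To establish the square-function bound in the Banach setting (where Fourier-multiplier and Riesz-transform tools are unavailable under mere uniform convexity), the plan is to discretize $t$ dyadically and exploit the probabilistic representation $H_tf(x)=\E[f(x+W_t)]$ through Brownian motion. For each dyadic scale $t\asymp 2^k$ and starting point $x$, the process $\tau\mapsto H_{t-\tau}f(x+W_\tau)$ on $[0,t]$ is a $Y$-valued martingale; a careful discrete sampling in $\tau$ produces a $Y$-valued martingale whose $L_q$-increments dominate, via an It\^o-type telescoping in each coordinate direction $i$, the quantities $\|T_{t,i}^{\circ 2}f\|_Y$ at scale $t\asymp 2^k$. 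Summing across dyadic scales $k\in\Z$ and invoking Pisier's martingale cotype $q$ inequality (Theorem~\ref{thm:pisier inequality}) then yields the required bound with constant $\lesssim\mathfrak{m}_q(Y)$, independently of $n$.

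The main obstacle will be this last step: producing a single family of martingales whose increments simultaneously witness the $\ell^2$-valued square function, the coordinate structure, and the dyadic $t$-scale, all compatibly with the iterated composition $T_{t,i}\circ T_{t,i}$ that hides a second spatial derivative. This is precisely the point at which the non-subordinated nature of the heat semigroup defeats the approach of~\cite{MTX}; replacing their Poisson-subordination argument by a direct stochastic/dyadic-martingale analysis tailored to the Gaussian kernel is the core technical novelty to be carried out.
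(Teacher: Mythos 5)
Your reduction steps are correct as far as they go: the identity $t\partial_t H_t f = \sum_{i=1}^n T_{t,i}^{\circ 2}f$ with $T_{t,i}=\sqrt{t}\,\partial_i H_{t/2}$ follows from $\Delta H_t=\sum_i \partial_i H_{t/2}\partial_iH_{t/2}$, the kernel of $T_{t,i}$ is indeed $-(x_i/\sqrt{t})h_{t/2}(x)$ with $L_1$-norm $\sqrt{2/\pi}$, and the Cauchy--Schwarz step cleanly isolates a factor $\sqrt{n}$. But the entire weight of the argument is then placed on the claimed dimension-free inequality
\begin{equation*}
\bigg(\int_0^\infty\!\!\int_{\R^n}\Bigl(\sum_{i=1}^n\|T_{t,i}^{\circ 2}f(x)\|_Y^2\Bigr)^{q/2}\mathrm{d}x\,\frac{\ud t}{t}\bigg)^{1/q}\lesssim\mathfrak{m}_q(Y)\,\|f\|_{L_q(\R^n;Y)},
\end{equation*}
and you do not prove it; you sketch a Brownian-motion-plus-dyadic-martingale program and explicitly acknowledge that ``the main obstacle will be this last step.'' As written, that step has no construction behind it: which filtration you discretize, how the second spatial derivative hiding in $T_{t,i}^{\circ 2}$ is recovered from martingale increments, and how the coordinate index $i$ and the dyadic scale $k$ interact in a single increment sequence are all left open. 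Absent that, the proposal is a plan, not a proof, and the claimed $\ell_2^n$-valued square-function inequality is exactly the kind of statement that needs to be established, not assumed (it is not an obvious consequence of $\mathfrak{m}_q(Y)<\infty$).

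The paper's proof is substantially simpler and circumvents the difficulty you ran into, so it is worth contrasting. It never introduces coordinate indices. Instead it uses the telescoping identity $\dot H_t f=\sum_{k\ge -1}\dot H_{2^k t}(H_{2^k t}-H_{3\cdot 2^k t})f$ together with a change of variable in $t$ to reduce to bounding $\int_0^\infty\|t\dot H_t(H_t-H_{3t})f\|_{L_q(\R^n;Y)}^q\,\ud t/t$. The factor $\sqrt{n}$ comes not from Cauchy--Schwarz on coordinates but from a direct evaluation of $\|t\dot h_t\|_{L_1(\R^n)}$ (Lemma~\ref{lem:time t convolution}), which is an exact Gamma-function computation giving $\|t\dot h_t\|_{L_1}\asymp\sqrt{n}$, applied as a convolution bound $\|t\dot H_t g\|_{L_q}\lesssim\sqrt{n}\|g\|_{L_q}$ with $g=(H_t-H_{3t})f$. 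The remaining $G$-function $\int_0^\infty\|(H_t-H_{3t})f\|_{L_q}^q\,\ud t/t$ is then controlled by Lemma~\ref{lem:Rota}, i.e., Rota's dilation theorem plus Pisier's reverse-martingale cotype inequality (Proposition~\ref{prop:Rota}), which works for any symmetric diffusion semigroup regardless of subordination. This is the move that your comment about the ``non-subordinated nature of the heat semigroup'' misses: the failure of subordination obstructs the transference used in~\cite{MTX} for the temporal $G$-function of general semigroups, but Rota's theorem applies to \emph{single} operator differences $T_t-T_{\alpha t}$ with no subordination needed, and after the telescope all that is required is such a difference plus a uniform $L_1$-kernel bound on $t\dot h_t$. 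To repair your proposal the cleanest fix is to abandon the coordinate decomposition and adopt precisely this telescope-plus-Rota structure.
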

By considering the direct sum of all the heat semigroups on $\R^n$ as $n$ ranges over $\N$, a positive answer to the abstract question~\cite[Problem~2]{MTX} would imply a dimension-independent bound
in~\eqref{eq:temporal heat intro}. However, at present it remains open whether or not the (mild) dimension dependence that appears in~\eqref{eq:temporal heat intro} can be removed altogether. While this is an interesting open question, it isn't relevant to the investigations of the present article because in order to prove Theorem~\ref{thm:lip doro intro} we actually need the following new result about the spatial derivatives of the heat semigroup.

\begin{theorem}[Spatial Littlewood--Paley--Stein inequalities for the heat semigroup]\label{thm:LPspatial} Fix $q\in [2,\infty)$ and $n\in \N$. Suppose that $(Y,\|\cdot\|_Y)$ is a Banach space that admits an equivalent norm with modulus of uniform convexity of power type $q$. Then for every $\vec{f}\in \ell_q^n(L_q(\R^n;Y))$ we have
\begin{equation}\label{eq:first spatial}
  \bigg(\int_0^\infty\left\|\sqrt{t}\operatorname{div}H_t\vec{f}\,\right\|_{L_q(\R^n;Y)}^q\frac{\ud t}{t}\bigg)^{\frac{1}{q}}
  \lesssim\sqrt{n}\cdot\mathfrak{m}_{q}(Y)\fint_{S^{n-1}}\Big\|\sigma\cdot\vec{f}\,\Big\|_{L_q(\R^n;Y)}\ud\sigma,
\end{equation}
where~\eqref{eq:first spatial} uses the following (standard) notation, in which $\vec{f}=(f_1,\ldots,f_n)$.
\begin{equation}\label{eq:def div}
H_t\vec{f}\eqdef(H_t f_1,\ldots,H_t f_n)\qquad\mathrm{and}\qquad  \operatorname{div}H_t\vec{f}\eqdef\sum_{j=1}^n\partial_j(H_t f_j).
\end{equation}
Moreover, for every $f\in L_q(\R^n;Y)$ and $z\in \R^n$ we have
\begin{equation}\label{eq:second spatial}
\bigg(\int_0^\infty\left\|\sqrt{t}(z\cdot\nabla)H_tf\right\|_{L_q(\R^n;Y)}^q\frac{\ud t}{t}\bigg)^{\frac{1}{q}}
  \lesssim\abs{z}\mathfrak{m}_q(Y)\Norm{f}{L_q(\R^n;Y)}.
\end{equation}
\end{theorem}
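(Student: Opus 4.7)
The plan is to deduce Theorem~\ref{thm:LPspatial} from the temporal heat LPS of Theorem~\ref{thm:LPtemporal} in two stages. The dimension-free directional bound~\eqref{eq:second spatial} is the conceptual core; I would reduce it to one dimension via the tensor structure of the heat semigroup and then handle the $1$D case probabilistically via Pisier's martingale inequality (Theorem~\ref{thm:pisier inequality}). By homogeneity take $|z|=1$ and $z=\sigma\in S^{n-1}$. Decomposing $\R^n=\R\sigma\oplus\sigma^\perp$, the heat semigroup factorizes as the commuting product $H_t^{(n)}=H_t^{(1)}\otimes H_t^{(n-1)}$, and $\sigma\cdot\nabla=\partial_s$ acts only on the first factor, so $\sqrt{t}(\sigma\cdot\nabla)H_t^{(n)}f=H_t^{(n-1)}\bigl[\sqrt{t}\,\partial_s H_t^{(1)}f\bigr]$. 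Since $H_t^{(n-1)}$ is a contraction on $L_q(\R^n;Y)$, this reduces~\eqref{eq:second spatial} to the analogous one-dimensional statement for $Z$-valued functions, where $Z:=L_q(\sigma^\perp;Y)$ satisfies $\mathfrak{m}_q(Z)\le\mathfrak{m}_q(Y)$; no dimensional loss is incurred at this stage.

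For the one-dimensional inequality, let $B_t$ be a Brownian motion on $\R$ of variance $2t$, so that $H_tg(x)=\E[g(x-B_t)]$, and for each $T>0$ form the $Z$-valued Brownian martingale $M_t:=H_{T-t}g(x-B_t)$, which interpolates between $M_0=H_Tg$ and $M_T=g(\cdot-B_T)$ and satisfies $\sup_t\Norm{M_t}{L_q(\R\times\Omega;Z)}\le\Norm{g}{L_q(\R;Z)}$. Discretize at the geometric times $t_k:=T(1-2^{-k})$, so that the remaining scale $\tau_k:=T-t_k=T\cdot 2^{-k}$ is dyadic. Using the semigroup identity, each martingale increment takes the form $M_{t_{k+1}}-M_{t_k}=H_{\tau_{k+1}}g(\cdot-X_k)-H_{\tau_k}g$, where $X_k:=B_{t_{k+1}}-B_{t_k}\sim\mathcal{N}(0,2\tau_{k+1})$ is independent of $\mathcal{F}_{t_k}$. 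A Gaussian-translation analysis of this centered quantity, exploiting that $H_{\tau_{k+1}}g$ is smooth at precisely the scale $\sqrt{\tau_{k+1}}$ matching the standard deviation of $X_k$, gives $\Norm{M_{t_{k+1}}-M_{t_k}}{L_q(\R\times\Omega;Z)}\asymp\bnorm{\sqrt{\tau_{k+1}}\,\partial_s H_{\tau_{k+1}}g}{L_q(\R;Z)}$, modulo lower-order remainders that can be absorbed via the temporal LPS of Theorem~\ref{thm:LPtemporal} (since $\partial_s^2=\partial_t$ on the heat semigroup). Pisier's inequality applied to $\{M_{t_k}\}$ then yields $\sum_k\bnorm{\sqrt{\tau_k}\,\partial_s H_{\tau_k}g}{L_q(\R;Z)}^q\lesssim\mathfrak{m}_q(Z)^q\Norm{g}{L_q(\R;Z)}^q$ uniformly in $T$; averaging over shifted dyadic discretizations of $(0,T]$ and letting $T\to\infty$ converts this dyadic sum into the continuous log-integral appearing in the statement.

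The divergence-form inequality~\eqref{eq:first spatial} follows from the spherical identity $\div\vec{f}=n\fint_{S^{n-1}}(\sigma\cdot\nabla)(\sigma\cdot\vec{f})\,\ud\sigma$ (a consequence of $\int_{S^{n-1}}\sigma_i\sigma_j\,\ud\sigma=(|S^{n-1}|/n)\delta_{ij}$) applied pointwise to $H_t\vec{f}$. A plain Minkowski application of~\eqref{eq:second spatial} would cost a factor of $n$ on the right-hand side; the sharper $\sqrt{n}$ advertised in the theorem is recovered by pairing this spherical representation with the probabilistic formula $\sqrt{t}\,\div H_t\vec{f}(x)=-(2\sqrt{t})^{-1}\E\bigl[B_t^{(n)}\cdot\vec{f}(x-B_t^{(n)})\bigr]$ for $B_t^{(n)}\sim\mathcal{N}(0,2tI_n)$, polar-decomposing $B_t^{(n)}$ into radius and direction and invoking the genuinely $n$-dimensional temporal LPS of Theorem~\ref{thm:LPtemporal} (whose $\sqrt{n}$-factor provides precisely the stated dimensional loss), together with a Kahane--Khintchine-type estimate on $S^{n-1}$ that replaces the crude triangle inequality and produces the spherical average $\fint_{S^{n-1}}\Norm{\sigma\cdot\vec{f}}{L_q}\ud\sigma$.

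The principal obstacle is the one-dimensional probabilistic step. Because $Z$ is assumed only to have martingale cotype $q$, and not to be UMD, no continuous-time Banach-valued It\^o theory (and in particular no Burkholder--Davis--Gundy inequality for $Z$-valued stochastic integrals) is available, so the argument must remain entirely within the discrete martingale framework of Theorem~\ref{thm:pisier inequality} and estimate $\Norm{M_{t_{k+1}}-M_{t_k}}{L_q(\R\times\Omega;Z)}$ directly through the semigroup-theoretic formula above. Quantifying the comparison between this increment and $\bnorm{\sqrt{\tau_{k+1}}\,\partial_s H_{\tau_{k+1}}g}{L_q(\R;Z)}$ amounts to a vector-valued Gaussian-translation estimate for heat evolutes and is the most delicate technical component of the proof.
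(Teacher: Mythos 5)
The paper proves Theorem~\ref{thm:LPspatial} by a very different and more modular route, and the route you propose has a real gap at its crux that you yourself flag. The paper's argument is: (i) prove the dyadic semigroup square-function estimate
$\bigl(\int_0^\infty\|(H_t-H_{\alpha t})f\|_{L_q}^q\,\ud t/t\bigr)^{1/q}\le(\log\alpha)^{1/q}\mathfrak{m}_q(Y)\|f\|_{L_q}$
as a direct consequence of Rota's representation theorem for symmetric diffusion semigroups combined with Pisier's inequality (Proposition~\ref{prop:Rota} and Lemma~\ref{lem:Rota}); (ii) prove the single-time kernel estimates $\|\sqrt{t}\div H_t\vec f\,\|_{L_q}\lesssim\sqrt n\,\fint_{S^{n-1}}\|\sigma\cdot\vec f\,\|_{L_q}\ud\sigma$ and $\|\sqrt t(z\cdot\nabla)H_tf\|_{L_q}\le|z|\|f\|_{L_q}/\sqrt\pi$ (Lemmas~\ref{lem:spatial vec}, \ref{lem:directional heat}) by a one-line first-moment computation with $\partial_jh_t(y)=-y_jh_t(y)/(2t)$; (iii) telescope $H_t\vec f=\sum_{k\ge-1}H_{2^kt}(H_{2^kt}-H_{3\cdot 2^kt})\vec f$, change variables $s=2^kt$, apply the kernel estimate to $\sqrt s\,\div H_s$ (or $\sqrt s(z\cdot\nabla)H_s$), and then use Lemma~\ref{lem:Rota} on the inner factor. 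This telescoping trick is precisely what lets the argument avoid comparing martingale increments to $\mathcal G$-function summands: the Rota/Pisier step is only ever applied to the clean difference $H_s-H_{3s}$, and the spatial derivative is peeled off as a bounded (pointwise-in-$t$) operator.

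By contrast, the core of your plan for~\eqref{eq:second spatial}, namely the claimed asymptotic equivalence $\|M_{t_{k+1}}-M_{t_k}\|_{L_q(\R\times\Omega;Z)}\asymp\|\sqrt{\tau_{k+1}}\,\partial_sH_{\tau_{k+1}}g\|_{L_q}$, is not true. The upper bound direction holds by Minkowski and translation averaging, but the lower bound (which is the direction you need, since Pisier bounds the increments and you must pass to the $\mathcal G$-function summand) fails for functions with high-frequency content. Testing on a wave packet at frequency $\omega$ with $\omega^2\tau\gg1$, the increment has size $\sim e^{-\omega^2\tau}$ while $\sqrt{\tau}\,\partial_sH_\tau g$ has size $\sim\sqrt\tau\,\omega\,e^{-\omega^2\tau}$, off by a factor $\sqrt\tau\,\omega\gg1$. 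Your remark that the discrepancy should be absorbed as a ``lower-order'' remainder via the temporal LPS is in fact backwards: in this regime the remainder \emph{dominates} both sides; you would need the stronger pointwise inequality $\|\sqrt\tau\,\partial_sH_\tau g\|\lesssim\|M_{t_{k+1}}-M_{t_k}\|+\|\tau\partial_tH_\tau g\|$, and it is not clear this holds in $L_q$ for $q\neq2$ nor how to carry out the Taylor expansion with effective control on the remainder when the only regularity available is that of a heat evolute. You identify this step as ``the most delicate technical component'' and indeed you do not prove it, so the proposal is incomplete exactly at the point where the paper's telescoping device provides a clean bypass.

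For~\eqref{eq:first spatial}, the spherical identity $\div\vec f=n\fint_{S^{n-1}}(\sigma\cdot\nabla)(\sigma\cdot\vec f\,)\ud\sigma$ is correct, but the proposed recovery of $\sqrt n$ from the apparent factor $n$ via ``a Kahane--Khintchine-type estimate on $S^{n-1}$ and the $\sqrt n$ in Theorem~\ref{thm:LPtemporal}'' is not an argument; the two sources of $\sqrt n$ are unrelated, and there is no obvious orthogonality to exploit for $q\neq2$. In the paper the $\sqrt n$ comes out immediately from the Gamma-function ratio $\Gamma(\tfrac{n+1}{2})/\Gamma(\tfrac n2)$ in Lemma~\ref{lem:spatial vec}, i.e., from the first absolute moment of the heat kernel after polar decomposition — no interplay with the temporal inequality is needed. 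In summary, your reduction to dimension one and the identification of the heat-semigroup martingale $M_t=H_{T-t}g(x-B_t)$ are sound, but the proposal takes a genuinely different and substantially harder route than the paper, and the load-bearing comparison is both unproved and false as stated.
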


We stated Theorem~\ref{thm:LPtemporal} despite the fact that only Theorem~\ref{thm:LPspatial} is needed in our proof of Theorem~\ref{thm:lip doro intro} because it is directly related to~\cite[Problem~2]{MTX}, and moreover we shall establish Theorem~\ref{thm:LPtemporal} without much additional effort. It is well known that Littlewood--Paley--Stein inequalities as above for the time derivatives and spatial derivatives often come hand-in-hand, and indeed a version of Theorem~\ref{thm:LPspatial} for the spatial derivatives of the Poisson semigroup was deduced in~\cite{MTX}. However, for abstract diffusion semigroups as in~\cite[Problem~2]{MTX}  there is no intrinsic notion of  spatial derivatives.

The temporal Littlewood--Paley--Stein inequality~\eqref{eq:MTX quote} for the Poisson semigroup was previously used for geometric purposes in~\cite{LN14}. In that setting, the Poisson semigroup sufficed due to parabolic scaling that was afforded by the geometry of the Heisenberg group. Parabolic scaling also makes our proof of  Theorem~\ref{thm:lip doro intro} go through, but this time it occurs due to our use of the heat semigroup in place of the Poisson semigroup. The only step in our proof that uses the heat semigroup and fails for the Poisson semigroup occurs in equation~\eqref{eq:here heat} below, which was inspired by the proof of~\cite[Lemma~2.5]{LN14}. An inspection of that step reveals that we could have also worked with the fractional semigroup $t\mapsto\exp(-t(-\Delta)^\alpha)$ for any $\alpha\in (1/2,1)$ in place of the heat semigroup. In this fractional (hence subordinated) setting one can prove the required version of Theorem~\ref{thm:LPspatial} by adapting (in the spatial setting) the  argument of~\cite{MTX}, though for our purposes one needs to also take care to derive polynomial dependence on dimension, which we checked is possible but this leads to a significantly more involved and less natural argument than the one that we obtain below for the heat semigroup (Theorem~\ref{thm:LPspatial} formally implies the corresponding statement for these fractional semigroups as well as the Poisson semigroup because these semigroups are subordinated to the heat semigroup).

\subsection{Comparison to previous work}\label{sec:previous} The previously best known bound~\cite{LiNaor:13} in the setting of Theorem~\ref{thm:main intro}   was that there exists a universal constant $C\in (0,\infty)$ such that for every integer $n\ge 2$ if $(X,\|\cdot\|_X)$ in an $n$-dimensional normed space and $(Y,\|\cdot\|_Y)$ is a Banach space whose modulus of uniform convexity is of power type $q$ for some $q\in [2,\infty)$ then for every $\e\in (0,1/2]$ we have
\begin{equation}\label{eq:quote LN}
 r^{X\to Y}(\e)\ge \exp\left(-\frac{(Cn)^{20(n+q)}\mathfrak{m}_q(Y)^q\log\left(\frac{1}{\e}\right)}{\e^{2n+2q-2}}\right).
\end{equation}
So, our new bound~\eqref{eq:our better}  is  stronger than~\eqref{eq:quote LN} both as $\e\to 0$ and as $n\to \infty$.

Under the more stringent assumption that $(Y,\|\cdot\|_Y)$ is a UMD Banach space with UMD constant $\beta=\beta(Y)$, in~\cite{HLN} it was shown that for every $\e\in (0,1/2]$ we have
\begin{equation}\label{eq:beta HLN}
r^{X\to Y}(\e)\ge \exp\left(-\frac{(\beta n)^{c\beta}}{\e^{c(n+\beta)}}\right),
\end{equation}
where $c\in (0,\infty)$ is a universal constant. Since, as we recalled in the end of Section~\ref{sec:Y geom},  there exists $2\le q\lesssim \beta$ for which $\mathfrak{m}_q(Y)\lesssim \beta^2$, the estimate~\eqref{eq:beta HLN} is weaker than our new bound~\eqref{eq:our better}.

No vector-valued Dorronsoro estimate was previously known for  uniformly convex targets. So, Theorem~\ref{thm:doro intrinsic} and Theorem~\ref{thm:local doro} that are obtained here are qualitatively new statements that answer Question~8 in~\cite{HLN} and are definitive due to Theorem~\ref{thm:superreflexive char 1} and the results of Section~\ref{sec:deduce cotype} below. Previously, vector-valued Dorronsoro estimates were known only when  the target Banach space $Y$ is UMD. Specifically, by~\cite[Lemma~10]{HLN} and~\cite[Theorem~19]{HLN} there exists a universal constant $\kappa\in (0,\infty)$ with the following properties. Suppose that $(X,\|\cdot\|_X)$ is an $n$-dimensional normed space, equipped with the Hilbertian norm $|\cdot|$ that is induced by the ellipsoid of maximal volume that is contained in $B_X$ (John position), so as to identify $X$ with $\R^n$. Suppose also that $(Y,\|\cdot\|_Y)$ is a UMD Banach space with $\beta=\beta(Y)$ and $f:\R^n\to Y$ is $1$-Lipschitz and compactly supported. Then
\begin{equation}\label{eq:doro UMD}
\bigg(\int_{\R^n}\int_0^\infty \inf_{\substack{\Lambda\in \mathscr{A}(X,Y)\\\|\Lambda\|_{\Lip(X,Y)\le 1}}} \fint_{x+tB^n}\frac{\|f(y)-\Lambda(y)\|_Y^{\kappa\beta}}{t^{\kappa\beta+1}}\ud y \ud t \ud x \bigg)^{\frac{1}{\kappa\beta}} \lesssim
n^{\frac{5}{2}}\beta^{15}|\supp(f)|^{\frac{1}{\kappa\beta}}.
\end{equation}

The relatively large power of the UMD constant $\beta$ that occurs in the right hand side of~\eqref{eq:doro UMD} reflects the fact that the proof of~\eqref{eq:doro UMD} in~\cite{HLN} is quite involved, in particular using the UMD property of $Y$ fifteen times (also through equivalent formulations of the UMD property, like the boundedness of the $Y$-valued Hilbert transform). In contrast, the proof of  the Dorronsoro estimate~\eqref{eq:doro inf intrinsic version} that we obtain here does not only address  the correct generality (of all uniformly convex targets), but it also achieves this by a new and  simpler argument that seems like the correct approach to the problem at hand. This proof is different from, but not any more complicated than, the existing ones for $Y=\R$, and is of interest even as a new route to the original result of Dorronsoro \cite{Dorro:85}.

Among the approaches to Dorronsoro-type estimates that appeared in the literature, some seem to be inherently Hilbertian, such as Fefferman's identity (77) in~\cite{Fef86} or the argument of Kristensen and Mingione in~\cite{KM07}. Nevertheless, this point of view yields more precise estimates (actually, identities) in the Hilbertian setting~\cite{DF16}, where the problem of understanding quantitative differentiation remains  open, with the best known bound on, say, $r^{\ell_2^n\to \ell_2}(1/4)$ currently having the same asymptotic form as the general bound of Theorem~\ref{thm:main intro}. Our proof of Theorem~\ref{thm:lip doro intro} is closer in spirit to the approaches of Jones~\cite{Jon89}, Seeger~\cite{See89} and Triebel~\cite{Triebel:89} (which are related to each other in terms of the underlying principles), though we do not see how to use these approaches to obtain a proof of Theorem~\ref{thm:lip doro intro}. As we stated earlier, despite significant effort it seems that a vector-valued adaptation of Dorronsoro's original strategy~\cite{Dorro:85} requires the UMD property, as in~\cite{HLN}. Finally, a possible direction for future research would be to investigate whether the approaches of Schul~\cite{Sch09}, Azzam--Schul~\cite{AS12,AS14} and Li--Naor~\cite{LN14} could be adapted  so as to yield a Dorronsoro-type estimate for uniformly convex targets, though (if at all possible) it seems that this route could at best yield a version of~\eqref{eq:doro inf intrinsic version}  with weaker dependence on $n$ that is insufficient for proving Theorem~\ref{thm:main intro} (a version of~\eqref{eq:doro inf intrinsic version} that leads to a bound similar to~\eqref{eq:quote LN} might be within reach through such an approach).

Our estimate~\eqref{eq:doro inf intrinsic version}  is also a quantitative improvement of~\eqref{eq:doro UMD} because, as we recalled in the end of Section~\ref{sec:Y geom}, there exists a universal constant $\kappa\in (0,\infty)$ such that when $Y$ is a  UMD Banach space with $\beta=\beta(Y)$, if we set $q=\kappa\beta$ then we have $\mathfrak{m}_q(Y)\lesssim \beta^2$. Consequently, it follows formally from~\eqref{eq:doro inf intrinsic version} that a variant of~\eqref{eq:doro UMD} holds true (using intrinsic averaging; see Section~\ref{sec:intrinsic} below) with the quantity $n^{5/2}\beta(Y)^{15}$ in the right hand side replaced by the smaller quantity $n^{9/20}\sqrt[5]{\log n}\cdot \beta^2$.

\subsubsection{Intrinsic averages}\label{sec:intrinsic}
A convenient advantage of~\eqref{eq:doro inf intrinsic version} over~\eqref{eq:doro UMD} is that the averaging in the left hand side of~\eqref{eq:doro inf intrinsic version} occurs over the intrinsic balls $x+tB_X$ while the averaging in~\eqref{eq:doro UMD} is over the auxiliary Euclidean balls $x+tB^n$. This difference reflects a conceptual rather than technical geometric difficulty that arose in~\cite{HLN} and is circumvented here altogether due to the use of the heat semigroup rather than a more complicated (but natural) operator that was used in~\cite{HLN}. As we have seen in Section~\ref{sec:local}, the fact that we can work here with averages over intrinsic balls leads to a quick and  direct deduction of our bound on $r^{X\to Y}(\e)$, while the fact that in~\eqref{eq:doro UMD} the  averages are over Euclidean balls requires an additional argument that is carried out in~\cite[Section~3.1]{HLN} in order to relate~\eqref{eq:doro UMD} to the uniform approximation by affine property with the stated asymptotic dependence on $n$. In~\cite{HLN} (as well as in~\cite{Dorro:85,Fef86,DF16}) the affine function $\Lambda$ that is used to approximate $f$ on the ball $x+t B^n$ is $(\Proj\otimes \mathrm{Id}_Y)f$, where $\mathrm{Id}_Y$ is the identity on $Y$ and $\Proj$ is the orthogonal projection from $L_2(x+tB^n)$ onto its linear subspace $\mathscr{A}(X,\R)\cap L_2(x+tB^n)$ that consists of all the restrictions to $x+uB^n$ of affine functions from $\R^n$ to $\R$. Lemma~10 of~\cite{HLN} asserts that $\|(\Proj\otimes \mathrm{Id}_Y)f\|_{\Lip(x+tB^n,Y)}\le \|f\|_{\Lip(x+tB^n,Y)}$, but its proof uses the rotational symmetry of the Euclidean ball $x+tB^n$ and the analogous statement is unknown with $x+tB^n$ replaced by $x+tB_X$. The need to address such issues does not arise in our approach because the Lipschitz constant of a heat evolute can be easily bounded without any need for additional symmetries of $B_X$. Nevertheless, it would be independently interesting to understand Question~\ref{Q:proj lip} below since the operator $\Proj$ is a natural object whose Lipschitz properties are equivalent to the potential availability of certain approximate distributional symmetries of high dimensional centrally symmetric convex bodies.

\begin{question}\label{Q:proj lip} Let $(X,\|\cdot\|_X)$ be an $n$-dimensional normed space, equipped with a Hilbertian norm $|\cdot|$ with respect to which it is isotropic with isotropic constant $L_X$, i.e., $|B_X|=1$ and~\eqref{eq:def isotropic} holds true. Consider the orthogonal projection $\Proj$ from $L_2(B_X)$ onto the subspace of affine mappings $\mathscr{A}(X,\R)\cap L_2(B_X)$. Thus, for every $f\in L_2(B_X)$ and $x\in X$ we have
\begin{equation}\label{eq:def proj}
\Proj f (x)=\int_{B_X} f(z)\ud z+\frac{1}{L_X^2}\sum_{j=1}^n x_j\int_{B_X} z_jf(z)\ud z.
\end{equation}
The formula~\eqref{eq:def proj} also makes sense when $f\in L_1(B_X;Y)$ for any Banach space $(Y,\|\cdot\|_Y)$, and we shall use the notation $\Proj f$ in this case as well  (i.e.,   slightly abusing notation by identifying $\Proj\otimes \mathrm{Id}_Y$ with $\Proj$). Is it true that for every Banach space $Y$ and every Lipschitz function $f:B_X\to Y$ we have $\|\Proj f\|_{\Lip(B_X,Y)}\lesssim \|f\|_{\Lip(B_X,Y)}$?

In Section~\ref{sec:wass} below we show that it suffices to treat the above question when $Y=\R$, i.e., the following operator norm identity holds true  for every Banach space $(Y,\|\cdot\|_Y)$.
\begin{equation}\label{eq:reduction to 1 dim}
\|\Proj\|_{\Lip(B_X,Y)\to   \Lip(B_X,Y)}= \|\Proj\|_{\Lip(B_X,\R)\to   \Lip(B_X,\R)}.
\end{equation}
Moreover, the quantity  $\|\Proj\|_{\Lip(B_X,\R)\to   \Lip(B_X,\R)}$ has the following geometric interpretation.

For every $x\in X\setminus\{0\}$ let  $\mu_x^+,\mu_x^-$ be the probability measures supported on $B_X$ whose densities are given for every $y\in X$ by
\begin{equation}\label{eq:def muxplus}
\mathrm{d}\mu_{x}^+(y)\eqdef \frac{\max\{x\cdot y,0\}\1_{B_X}(y)}{\frac12\int_{B_X}|x\cdot z|\ud z}\ud y\qquad\mathrm{and}\qquad \mathrm{d}\mu_x^-(y)\eqdef \frac{\max\{-x\cdot y,0\}\1_{B_X}(y)}{\frac12\int_{B_X}|x\cdot z|\ud z}\ud y.
\end{equation}
Alternatively, if we consider the half spaces $H_x^+\eqdef\{y\in \R^n:\ x\cdot y\ge 0\}$,  $H_x^-\eqdef\{y\in \R^n:\ x\cdot y\le 0\}$ then $\mu_x^+, \mu_x^-$ are supported on the half balls $H_x^+\cap B_X, H_x^-\cap B_X$, respectively, and each of them has density $y\mapsto |x\cdot y|$ on the corresponding half ball. We are interested in the extent to which the convex body $B_X$ is approximately symmetric about the hyperplane $x^\perp\subset \R^n$ in the sense that the probability measures $\mu_x^+,\mu_x^-$ are distributionally close to each other. In Section~\ref{sec:wass} we show that
\begin{equation}\label{eq:wasserstein symmetry}
\|\Proj\|_{\Lip(B_X,\R)\to   \Lip(B_X,\R)}\asymp \sup_{x\in X\setminus \{0\}} \frac{|x|}{L_X\|x\|_X}\mathsf{W}_1^{\|\cdot\|_X}(\mu_x^+,\mu_x^-),
\end{equation}
where $\mathsf{W}_1^{\|\cdot\|_X}(\cdot,\cdot)$ denotes the Wasserstein-$1$ (transportation cost) distance  induced by $\|\cdot\|_X$. Consequently,  $\|\Proj\|_{\Lip(B_X,\R)\to   \Lip(B_X,\R)}=O(1)$ if and only if the Wasserstein-$1$ distance between $\mu_x^+$ and $\mu_x^-$ is at most a constant multiple of $L_X\|x\|_X/|x|$ for every $x\in X\setminus\{0\}$. The latter  statement implies that $B_X$ has the following  ``Wasserstein symmetry." By~\cite{MP89} we have $|x|/(nL_X)\lesssim \|x\|_X\lesssim |x|/L_X$ for every $x\in X$. Hence~\eqref{eq:wasserstein symmetry} implies that if $\|\Proj\|_{\Lip(B_X,\R)\to   \Lip(B_X,\R)}=O(1)$  then for every $x\in X\setminus \{0\}$ such that $\|x\|_X$ is not within $O(1)$ factors of the Euclidean norm $|x|/L_X$, the Wasserstein-$1$ distance between $\mu_x^+$ and $\mu_x^-$ must necessarily be $o(1)$.

\end{question}

\subsubsection{Beyond Banach spaces} Quantitative differentiation is a widely studied topic of importance to several mathematical disciplines, often (but not only) as a tool towards proofs of rigidity theorems. Given an appropriate (case-specific) replacement for the notation of ``affine mapping," one can formulate notions of ``differentiation" in many settings that do not necessarily involve linear spaces; examples of such ``qualitative" metric differentiation  results include~\cite{Pan89,Kir94,Che99,Pau01,Kei04,LN06,CK06,CK06,CK10-annals,CK10-inventiones}. Corresponding results about quantitative differentiation, which lead to refined (often quite subtle and important) rigidity results can be found in~\cite{Ben85,JLS96,Nao98,Mat99,CKN09,LR10,Pen11-1,Pen11-2,CKN11,LS11,Che12,EFW12,MN13,EFW13,EMR13,CN13,CN13-2,AS14,Li14,EMR15}. Due to the prominence of this topic and the fact that  many of the quoted results are probably not sharp, it would be of interest to develop new methods to prove sharper quantitative differentiation results. While the argument of~\cite{HLN} yielded the best-known bound for UMD Banach spaces, the methods of~\cite{HLN} relied extensively on the underlying linear structure. The present article uses the linear structure as well, but it suggests that heat flow methods may be useful for obtaining  quantitative differentiation results in situations where heat flow makes sense but the underlying metric space is not  a Banach space. It seems to be worthwhile to investigate whether ``affine approximations" (appropriately defined) of an appropriate evolute (which is a regularized object)  must  be close to the initial mapping on some macroscopically large ball. We did not attempt to investigate this approach when the underlying spaces are not Banach spaces, but we believe that this is an open-ended yet  worthwhile direction  for future research.

\section{The Lipschitz constant of heat evolutes}

In this short section we shall establish an estimate on the Lipschitz constant of heat evolutes. This control will be needed later in order to deduce the localized Dorronsoro estimate of Theorem~\ref{thm:local doro}  from the Carleson measure estimate for the heat semigroup of Theorem~\ref{thm:lip doro intro}. Throughout, we are given an $n$-dimensional normed space $(X,\|\cdot\|_X)$ that is also equipped with a Hilbertian norm $|\cdot|$ through which $X$ is identified as a real vector space with $\R^n$. In this setting, for $p\in (0,\infty]$ let
\begin{equation}\label{eq:defMq}
M_p(X)\eqdef \bigg(\fint_{S^{n-1}} \|\sigma\|_X^p \ud \sigma \bigg)^{\frac{1}{p}}.
\end{equation}
So, $M_\infty(X)=\sup_{\sigma\in S^{n-1}}\|\sigma\|_X$, but we shall use below the more common notation $b(X)\eqdef M_\infty(X)$. Also, recalling the notation~\eqref{eq:defIqMq}, we have $M(X)=M_1(X)$.

\begin{lemma}\label{lem:lip of heat} There exists a universal constant $C\in (0,\infty)$ with the following property. Fix  $n\in \N$, an $n$-dimensional normed space $(X,\|\cdot\|_X)$, and $L\in [1,\infty)$. Let $|\cdot|$ be a Hilbertian norm on $X$, thus identifying $X$ with $\R^n$. Suppose that $f:X\to Y$ satisfies  $\|f\|_{\Lip(B_X,Y)}\le 1$ and $\|f\|_{\Lip(X,Y)}\le L$. Then for every $x\in B_X$ we have
\begin{equation}\label{eq:if t is small lip}
0<t\le \frac{(1-\|x\|_X)^2}{C\left(M_1(X)\sqrt{n}+b(X)\sqrt{\log L}\right)^2}\implies \left\|\Taylor_x^1(H_tf)\right\|_{\Lip(X,Y)}\le 2.
\end{equation}
\end{lemma}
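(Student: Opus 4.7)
The plan is to estimate $\|\Taylor_x^1(H_tf)\|_{\Lip(X,Y)}$ through its description as a supremum of directional derivatives, namely $\|\Taylor_x^1(H_tf)\|_{\Lip(X,Y)}=\sup_{z\in\partial B_X}\|(z\cdot\nabla)H_tf(x)\|_Y$. After a routine mollification (applying the argument to $H_\eps f$, which inherits the same local and global Lipschitz bounds as $f$ because the heat kernel is a probability density, and then letting $\eps\downarrow 0$), I may assume $f$ is smooth and move the derivative inside the convolution:
\[
(z\cdot\nabla)H_tf(x)=\int_{\R^n}h_t(w)\,(z\cdot\nabla)f(x-w)\ud w.
\]
I then split the integral according to whether $x-w\in B_X$ or not. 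For $w$ with $x-w\in B_X$, the hypothesis $\|f\|_{\Lip(B_X,Y)}\le 1$ yields $\|(z\cdot\nabla)f(x-w)\|_Y\le\|z\|_X=1$; otherwise, the global bound $\|f\|_{\Lip(X,Y)}\le L$ gives a factor $L$. Combining,
\[
\|(z\cdot\nabla)H_tf(x)\|_Y\le 1+L\cdot\Pr\bigl[x-W\notin B_X\bigr],
\]
where $W$ has density $h_t$ on $\R^n$. It therefore remains to verify that the hypothesis on $t$ forces $\Pr[\|x-W\|_X>1]\le 1/L$, which delivers the required bound of $2$.

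For this, note first that the triangle inequality together with $x\in B_X$ reduces the task to showing $\Pr[\|W\|_X>1-\|x\|_X]\le 1/L$, and that $W=\sqrt{2t}\,g$ where $g$ is standard Gaussian on $(\R^n,|\cdot|)$, by the explicit form of the heat kernel. The map $y\mapsto\|y\|_X$ is $b(X)$-Lipschitz with respect to $|\cdot|$ (since $\|y\|_X\le b(X)|y|$), and its Gaussian mean is the classical Gaussian mean width
\[
\E\|g\|_X=\E|g|\cdot\fint_{S^{n-1}}\|\sigma\|_X\ud\sigma\asymp\sqrt{n}\,M_1(X),
\]
by the independence of $|g|$ and $g/|g|$ for the spherically symmetric Gaussian and the standard estimate $\E|g|\asymp\sqrt{n}$. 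The Borell--Sudakov--Tsirelson Gaussian concentration inequality then gives, for a suitable universal constant $A\in(0,\infty)$,
\[
\Pr\bigl[\|g\|_X>A\sqrt{n}\,M_1(X)+b(X)\sqrt{2\log L}\bigr]\le e^{-\log L}=\frac{1}{L},
\]
and choosing the universal constant $C$ in the statement large enough relative to $A$ ensures $(1-\|x\|_X)/\sqrt{2t}\ge A\sqrt{n}\,M_1(X)+b(X)\sqrt{2\log L}$ whenever $t$ satisfies the hypothesis. The substance of the argument is this concentration step, which is what produces the precise combination $M_1(X)\sqrt{n}+b(X)\sqrt{\log L}$ appearing in the denominator; the mollification is the only technical nuisance and is completely standard.
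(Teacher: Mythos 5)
Your proof is correct. It reaches the same tail-probability reduction as the paper—smoothing, writing $H_t f(x)=\E[f(x-\sqrt{2t}\,g)]$ with $g$ a standard Gaussian, splitting according to whether $x-\sqrt{2t}\,g$ lies in $B_X$, and arriving at
\[
\sup_{z\in\partial B_X}\|(z\cdot\nabla)H_tf(x)\|_Y\le 1+L\,\Pr\Bigl[\|g\|_X\ge \tfrac{1-\|x\|_X}{\sqrt{2t}}\Bigr],
\]
so the content is to show the tail is $\le 1/L$. From there, however, you take a genuinely different route. The paper bounds the tail by Markov's inequality applied to the $p$-th moment $\E[\|g\|_X^p]$, computes that moment via polar coordinates, invokes the Litvak--Milman--Schechtman theorem $M_p(X)\asymp M(X)+\sqrt{p/(n+p)}\,b(X)$ to identify it with $M(X)\sqrt{n}+b(X)\sqrt{p}$, and then optimizes over $p$. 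You instead observe that $y\mapsto\|y\|_X$ is $b(X)$-Lipschitz for $|\cdot|$ with Gaussian mean $\E\|g\|_X\asymp\sqrt{n}\,M_1(X)$, and apply the Borell--Sudakov--Tsirelson concentration inequality directly. This is cleaner: you avoid the explicit $p$-optimization and the LMS estimate (which is itself, in spirit, a moment-encoded version of the same concentration phenomenon), and the combination $M_1(X)\sqrt{n}+b(X)\sqrt{\log L}$ drops out immediately as ``mean plus Lipschitz-scaled deviation.'' The paper's version has the modest advantage of staying entirely within the moment framework that it already sets up for other purposes (LMS is re-used in the proof of the isotropic bounds in Section~\ref{sec:convex geometry}), but your argument is shorter and conceptually more transparent for this particular lemma.
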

\begin{proof} By convolving $f$ with a smooth bump function with arbitrarily small support we may assume without loss of generality that $f$ is smooth, in which case  $\|(z\cdot\nabla)f(w)\|_Y\le \1_{B_X}(w)+L\1_{ \R^n\setminus B_X}(w)$ for every $w\in \R^n$ and $z\in \partial B_X$. Consequently, if we let $G$ denote a standard Gaussian vector in $\R^n$, i.e., the density of $G$ is proportional to $e^{-|x|^2/2}$, then for every $z\in \partial B_X$ we have
\begin{multline*}\label{eq:gaussian version}
\left\|(z\cdot \nabla) H_tf(x)\right\|_Y=\big\|(z\cdot \nabla)\E\big[f\big(x-\sqrt{2t}G\big)\big]\big\|_Y\le \E\big[\big\|(z\cdot \nabla)f\big(x-\sqrt{2t}G\big)\big\|_Y\big]\\\le \Pr\big[\big\|x-\sqrt{2t}G\big\|_X\le 1\big]+L\Pr\big[\big\|x-\sqrt{2t}G\big\|_X\ge 1\big]\le 1+L\Pr\bigg[\|G\|_X\ge \frac{1-\|x\|_X}{\sqrt{2t}}\bigg].
 \end{multline*}
Hence, using Markov's inequality we see that
 \begin{equation}\label{eq:markov inequality}
 \left\|\Taylor_x^1(H_tf)\right\|_{\Lip(X,Y)}=\sup_{z\in \partial B_X} \left\|(z\cdot \nabla) H_tf(x)\right\|_Y\le 1+L\inf_{p\in (0,\infty)} \left(\frac{\sqrt{2t}}{1-\|x\|_X}\right)^p \E\big[\|G\|_X^p\big].
 \end{equation}

 For every $p\in (0,\infty)$, by integrating in polar coordinates we have
\begin{equation}\label{eq:Gaussian moments}
\E\big[\|G\|_X^p\big]=\frac{|B^n|}{(2\pi)^{\frac{n}{2}}}\bigg(\int_0^\infty nr^{n+p-1}e^{-\frac{r^2}{2}}\ud r\bigg)\fint_{S^{n-1}} \|\sigma\|_X^p\ud \sigma =\frac{2^{\frac{p}{2}}\Gamma\left(\frac{n+p}{2}\right)}{\Gamma\left(\frac{n}{2}\right)}M_p(X)^p.
\end{equation}
Also, a theorem of Litvak, Milman and Schechtman~\cite{LMS98} (see also~\cite[Theorem~5.2.4]{BGVV14}) asserts that
\begin{equation}\label{eq:quote LMS}
\forall\, p\in [1,\infty],\qquad M_p(X)\asymp M(X)+\frac{\sqrt{p}}{\sqrt{n+p}}b(X).
\end{equation}
By combining~\eqref{eq:Gaussian moments} with~\eqref{eq:quote LMS}  and Stirling's formula  we therefore see that
\begin{equation}\label{gaussian moments m b}
\forall\, p\in [1,\infty],\qquad \left(\E\big[\|G\|_X^p\big]\right)^{\frac{1}{p}}\asymp \sqrt{n+p}\left(M(X)+\frac{\sqrt{p}}{\sqrt{n+p}}b(X)\right)\asymp M(X)\sqrt{n}+b(X)\sqrt{p}.
\end{equation}

Suppose that  $t$ satisfies the assumption that appears in~\eqref{eq:if t is small lip}, with $C\in (0,\infty)$ being a large enough universal constant that will be specified presently.  A substitution of~\eqref{gaussian moments m b} into~\eqref{eq:markov inequality} shows that there  exists a universal constant $\kappa\in (0,\infty)$ such that
\begin{align}\label{eq:substitute upper bound on t}
\nonumber \left\|\Taylor_x^1(H_tf)\right\|_{\Lip(X,Y)}&\le 1+L \inf_{p\in (0,\infty)} \left(\frac{\kappa\sqrt{2t}\left(M(X)\sqrt{n}+b(X)\sqrt{p}\right)}{1-\|x\|_X}\right)^p\\
&\le 1+L \inf_{p\in (0,\infty)} \left(\frac{\kappa\sqrt{2}}{\sqrt{C}}\cdot\frac{M(X)\sqrt{n}+b(X)\sqrt{p}}{M(X)\sqrt{n}+b(X)\sqrt{\log L}}\right)^p,
\end{align}
where in the last step of~\eqref{eq:substitute upper bound on t} we used the upper bound on $t$ that appears in~\eqref{eq:if t is small lip}. Our choice of $p$ in~\eqref{eq:substitute upper bound on t} is $p=C\left(M(X)\sqrt{n}+b(X)\sqrt{\log L}\right)^2/(8e^2\kappa^2b(X)^2)$. Since $b(X)\lesssim M(X)\sqrt{n}$ (e.g., this follows from the case $p=1$ of~\eqref{eq:quote LMS}), provided $C$ is a large enough universal constant we have $p\ge 1$ and  $p\ge nM(X)^2/b(X)^2$. This implies that $M(X)\sqrt{n}+b(X)\sqrt{p}\le 2b(X)\sqrt{p}$, and therefore,
 \begin{equation*}
 \left(\frac{\kappa\sqrt{2}}{\sqrt{C}}\cdot\frac{M(X)\sqrt{n}+b(X)\sqrt{p}}{M(X)\sqrt{n}+b(X)\sqrt{\log L}}\right)^p\le  \left(\frac{\kappa\sqrt{2}}{\sqrt{C}}\cdot\frac{2b(X)\sqrt{p}}{M(X)\sqrt{n}+b(X)\sqrt{\log L}}\right)^p=\frac{1}{e^p}\le \frac{1}{L^{\frac{C}{8e^2\kappa^2}}}\le \frac{1}{L},
 \end{equation*}
 where in the penultimate step we used the fact that $p\ge (C/8\kappa^2e^2\kappa^2)\log L$ and the final step holds true provided $C\ge 8e^2\kappa^2$. By~\eqref{eq:substitute upper bound on t}, this concludes the proof of the desired implication~\eqref{eq:if t is small lip}.
\end{proof}

\section{Deduction of Theorem~\ref{thm:lip doro intro} and Theorem~\ref{thm:local doro} from Theorem~\ref{thm:LPspatial}}\label{sec:assuming}

Theorem~\ref{thm:LPspatial}, i.e., the Littlwood--Paley--Stein inequalities for the heat semigroup, will be proven in Section~\ref{sec:segroup proof} below. In this section we will assume the validity of Theorem~\ref{thm:LPspatial} for the moment and show how Theorem~\ref{thm:lip doro intro} and Theorem~\ref{thm:local doro} follow from it. This implies our main result on quantitative differentiation, namely Theorem~\ref{thm:main intro}, as we explained in Section~\ref{sec:local}.  The main step is Theorem~\ref{thm:w1p doro} below, which asserts a statement in the spirit of Theorem~\ref{thm:lip doro intro} but with the assumption that $f$ is Lipschitz replaced by the weaker assumption that certain Sobolev $W^{1,q}$ norms of $f$ are finite. Similar refinements already appear in Dorronsoro's original work~\cite{Dorro:85} for scalar-valued functions.

\begin{theorem}\label{thm:w1p doro} Fix $q\in [2,\infty)$ and $n\in \N$. Suppose that $(X,\|\cdot\|_X)$ and $(Y,\|\cdot\|_Y)$ are Banach spaces that satisfy $\dim(X)=n$ and $\mathfrak{m}_q(Y)<\infty$. Let $|\cdot|$ be a Hilbertian norm on $X$, thus identifying $X$ with $\R^n$. Then for every $\gamma\in (0,\infty)$ and every smooth function $f:\R^n\to Y$ we have
\begin{multline}\label{eq:W1p doro}
\bigg(\int_{\R^n}\int_0^\infty \fint_{x+tB_X}\frac{\|f(y)-\Taylor_x^1(H_{\gamma t^2}f)(y)\|_Y^q}{t^{q+1}}\ud y \ud t \ud x \bigg)^{\frac{1}{q}}\\\lesssim
\frac{\mathfrak{m}_q(Y)}{\sqrt{\gamma}}\bigg(\fint_{B_X} |x|^q\left\|x\cdot \nabla f\right\|^q_{L_q(\R^n;Y)}\ud x \bigg)^{\frac{1}{q}}+\mathfrak{m}_q(Y)\sqrt{\gamma n}\fint_{S^{n-1}} \left\|\sigma\cdot \nabla f\right\|_{L_q(\R^n;Y)}\ud \sigma .
\end{multline}
Consequently, by choosing $\gamma\in (0,\infty)$ so as to minimize the right hand side of~\eqref{eq:W1p doro}, if we define
\begin{equation*}\label{eq:def gamma f}
\gamma(f)\eqdef \frac{1}{\sqrt{n}}\cdot\frac{\left(\fint_{B_X} |x|^q\left\|x\cdot \nabla f\right\|^q_{L_q(\R^n;Y)}\ud x\right)^{\frac{1}{q}}}{\fint_{S^{n-1}} \left\|\sigma\cdot \nabla f\right\|_{L_q(\R^n;Y)}\ud \sigma},
\end{equation*}
then
\begin{multline}\label{eq:optimized gamma f}
\bigg(\int_{\R^n}\int_0^\infty \fint_{x+tB_X}\frac{\|f(y)-\Taylor_x^1\big(H_{\gamma(f) t^2}f\big)(y)\|_Y^q}{t^{q+1}}\ud y \ud t \ud x \bigg)^{\frac{1}{q}}\\\lesssim \mathfrak{m}_q(Y)\sqrt[4]{n} \bigg(\fint_{B_X} |x|^q\left\|x\cdot \nabla f\right\|^q_{L_q(\R^n;Y)}\ud x\bigg)^{\frac{1}{2q}} \bigg(\fint_{S^{n-1}} \left\|\sigma\cdot \nabla f\right\|_{L_q(\R^n;Y)}\ud \sigma \bigg)^{\frac{1}{2}}.
\end{multline}
\end{theorem}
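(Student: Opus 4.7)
The plan is to split the integrand via the triangle inequality,
\[
f(y)-\Taylor_x^1(H_{\gamma t^2}f)(y) = \bigl[f(y)-H_{\gamma t^2}f(y)\bigr] + \bigl[H_{\gamma t^2}f(y) - \Taylor_x^1(H_{\gamma t^2}f)(y)\bigr],
\]
and to bound the two resulting contributions to the $q$-th power of the left-hand side of~\eqref{eq:W1p doro} separately, combining them via $\|A+B\|^q\le 2^{q-1}(\|A\|^q+\|B\|^q)$ and a $q$-th root at the end. The first summand is independent of $x$, so after integrating $x$ over $\R^n$ the average over $x+tB_X$ trivializes by Fubini, whereas the second summand is the second-order Taylor remainder of the smooth heat evolute $H_{\gamma t^2}f$ at $x$ and will be recast as an integral of a second directional derivative.

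For the first piece, after this Fubini reduction and the substitution $s=\gamma t^2$, the contribution is a multiple of $\gamma^{q/2}\int_0^\infty \|f-H_sf\|_{L_q(\R^n;Y)}^q\, s^{-q/2-1}\,\ud s$. Writing $f-H_sf=-\int_0^s \partial_r H_rf\,\ud r$ and applying the one-dimensional Hardy inequality (with exponent $\alpha=q/2$) replaces this by a multiple of $\gamma^{q/2}\int_0^\infty \|\sqrt{r}\,\partial_r H_rf\|_{L_q(\R^n;Y)}^q\,\frac{\ud r}{r}$. Since the heat semigroup commutes with differentiation, $\partial_r H_rf=\operatorname{div}H_r(\nabla f)$, so the first spatial Littlewood--Paley--Stein inequality~\eqref{eq:first spatial} applied to $\vec{f}=\nabla f$ bounds this by $(\sqrt{\gamma n}\,\mathfrak{m}_q(Y))^q\bigl(\fint_{S^{n-1}}\|\sigma\cdot\nabla f\|_{L_q(\R^n;Y)}\,\ud\sigma\bigr)^q$, which is exactly the $q$-th power of the second summand on the right of~\eqref{eq:W1p doro}.

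For the second piece, the substitution $y=x+tw$ with $w\in B_X$ together with the integral form of the Taylor remainder yields
\[
H_{\gamma t^2}f(x+tw)-H_{\gamma t^2}f(x)-tw\cdot\nabla H_{\gamma t^2}f(x) = t^2\int_0^1 (1-v)\,(w\cdot\nabla)^2H_{\gamma t^2}f(x+vtw)\,\ud v.
\]
Minkowski's integral inequality in $v$, combined with translation invariance of Lebesgue measure in $x$, eliminates the shift $vtw$ and produces the bound $\frac{t^2}{2}\|(w\cdot\nabla)^2 H_{\gamma t^2}f\|_{L_q(\R^n;Y)}$. The key identity at this stage is $(w\cdot\nabla)^2H_sf=(w\cdot\nabla)H_s\bigl[(w\cdot\nabla)f\bigr]$, valid because the heat kernel commutes with directional differentiation; this rewrites the problematic second derivative as a \emph{first} spatial derivative of a heat evolute of the fixed, $s$-independent, function $w\cdot\nabla f$. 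After substituting $s=\gamma t^2$, the second spatial Littlewood--Paley--Stein inequality~\eqref{eq:second spatial} applied with $z=w$ and $g=w\cdot\nabla f$ supplies the bound $(|w|\mathfrak{m}_q(Y))^q\|w\cdot\nabla f\|_{L_q(\R^n;Y)}^q$; averaging over $w\in B_X$ then yields the $q$-th power of the first summand on the right of~\eqref{eq:W1p doro}, and the optimization~\eqref{eq:optimized gamma f} follows by AM--GM in $\gamma$.

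The main obstacle is precisely this choice of factorization of $(w\cdot\nabla)^2H_s$: the naive factorization from the semigroup property, $(w\cdot\nabla)^2H_s=[(w\cdot\nabla)H_{s/2}]\circ[(w\cdot\nabla)H_{s/2}]$, would force an application of~\eqref{eq:second spatial} to the $s$-dependent object $(w\cdot\nabla)H_{s/2}f$, which cannot be placed outside the $s$-integral. It is the alternative identity $(w\cdot\nabla)^2H_sf=(w\cdot\nabla)H_s(w\cdot\nabla f)$, exploiting commutation of spatial differentiation with the heat semigroup, that makes the second spatial Littlewood--Paley--Stein estimate usable on a fixed input and thereby closes the argument.
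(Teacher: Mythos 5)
Your proposal is correct and takes essentially the same route as the paper's proof: decompose by adding and subtracting $H_{\gamma t^2}f(y)$, treat the ``far'' piece $f-H_{\gamma t^2}f$ via the fundamental theorem of calculus in time combined with the identity $\partial_s H_s f = \operatorname{div} H_s\nabla f$ and~\eqref{eq:first spatial}, and treat the ``near'' piece (the second-order Taylor remainder of $H_{\gamma t^2}f$) via Minkowski's inequality, translation invariance, the commutation $(w\cdot\nabla)^2 H_s f = (w\cdot\nabla)H_s(w\cdot\nabla f)$, and~\eqref{eq:second spatial}. The only deviations are cosmetic: you invoke the one-dimensional Hardy inequality where the paper reaches the identical estimate by writing $f-H_{\gamma t^2}f=-\int_0^1 \gamma t^2\dot H_{u\gamma t^2}f\,\ud u$, applying Minkowski in $u$, and substituting $s=u\gamma t^2$; and you combine the two pieces with $\|A+B\|^q\le 2^{q-1}(\|A\|^q+\|B\|^q)$ where the paper uses the $L_q$ triangle inequality directly. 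Both variants produce the same bound up to universal constants.
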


\begin{proof}
The validity of~\eqref{eq:optimized gamma f} follows from substituting (the optimal choice) $\gamma=\gamma(f)$ into~\eqref{eq:W1p doro}. So, it remains to prove~\eqref{eq:W1p doro}.  To do so, we shall prove the following two estimates.

\begin{align}\label{eq:I2 bound}
\nonumber J_1&\eqdef\bigg(\int_{\R^n}\int_0^\infty \fint_{B_X}\frac{\|H_{\gamma t^2}f(x+tz)-\Taylor_x^1(H_{\gamma t^2}f)(x+tz)\|_Y^q}{t^{q+1}} \ud z \ud t \ud x\bigg)^{\frac{1}{q}}\\&\lesssim
\frac{\mathfrak{m}_q(Y)}{\sqrt{\gamma}}\bigg(\fint_{B_X} |x|^q\left\|x\cdot \nabla f\right\|^q_{L_q(\R^n;Y)}\ud x\bigg)^{\frac{1}{q}},
\end{align}
and
\begin{equation}\label{eq:I1 bound}
J_2\eqdef \bigg(\int_0^\infty\int_{\R^n} \frac{\|f(y)-H_{\gamma t^2}f(y)\|^q_{Y}}{t^{q+1}}\ud y\ud t\bigg)^{\frac{1}{q}}\lesssim
\mathfrak{m}_q(Y)\sqrt{\gamma n}\fint_{S^{n-1}} \left\|\sigma\cdot \nabla f\right\|_{L_q(\R^n;Y)}\ud \sigma,
\end{equation}

Once proven, the validity of~\eqref{eq:I1 bound} and~\eqref{eq:I2 bound} would imply Theorem~\ref{thm:w1p doro} because, by adding and subtracting $H_{\gamma t^2}f(y)$ and applying the triangle inequality in $L_q(\R^n\times \R\times \R^n;Y)$, we have
\begin{equation*}\label{eq:bound by I1 I2}
\bigg(\int_{\R^n}\int_0^\infty \fint_{x+tB_X}\frac{\|f(y)-\Taylor_x^1(H_{\gamma t^2}f)(y)\|_Y^q}{t^{q+1}}\ud y \ud t \ud x \bigg)^{\frac{1}{q}}\le J_1+J_2.
\end{equation*}

To prove~\eqref{eq:I1 bound}, observe that $\dot H_tf\eqdef\partial_t H_tf=\Delta H_tf=\operatorname{div}H_t\nabla f$, and therefore
\begin{align}\label{eq:here heat}
\nonumber J_2&=\bigg(\int_0^\infty \Big\|\int_0^1 \gamma t^2\dot{H}_{u\gamma t^2}f\ud u\Big\|^q_{L_q(\R^n;Y)}\frac{\ud t}{t^{q+1}}\bigg)^{\frac{1}{q}}\\\nonumber &\le \gamma \int_0^1 \bigg(\int_0^\infty t^{q-1}\left\|\dot{H}_{u\gamma t^2}f\right\|_{L_q(\R^n;Y)}^q\ud t\bigg)^{\frac{1}{q}}\ud u\\ \nonumber
&=  \sqrt{\gamma} \bigg(\int_0^1 \frac{\ud u}{\sqrt{u}}\bigg)\bigg(\frac12\int_0^\infty \left\|\sqrt{s}\dot{H}_{s}f\right\|_{L_q(\R^n;Y)}^q\frac{\ud s}{s}\bigg)^{\frac{1}{q}} \\&=2^{1-\frac{1}{q}}\sqrt{\gamma}
\bigg(\int_0^\infty \left\|\sqrt{s}\operatorname{div}H_t\nabla f\right\|_{L_q(\R^n;Y)}^q\frac{\ud s}{s}\bigg)^{\frac{1}{q}}.
\end{align}
The desired estimate~\eqref{eq:I1 bound} on $J_2$ now follows from an application of~\eqref{eq:first spatial} with $\vec{f}=\nabla f$.

To prove~\eqref{eq:I2 bound}, observe first that by the integral representation for the error in Taylor's formula, for every $x,z\in \R^n$ and $t\in \R$ we have
$$
H_{\gamma t^2}f(x+tz)-\Taylor_x^1(H_{\gamma t^2}f)(x+tz)=\int_0^1 (tz\cdot\nabla)^2H_{\gamma t^2}f(x+stz)(1-s)\ud s.
$$
Consequently, using Jensen's inequality we see that
\begin{align}\label{eq:I2 taylor}
\nonumber J_1 &\le \int_0^1\bigg(\int_{\R^n}\int_0^\infty \fint_{B_X}t^{q-1}\left\|(z\cdot\nabla)^2H_{\gamma t^2}f(x+stz)\right\|_Y^q\ud z \ud t \ud x \bigg)^{\frac{1}{q}}(1-s)\ud s\\
&= \frac12 \bigg(\int_0^\infty \fint_{B_X}t^{q-1}\left\|(z\cdot\nabla)^2H_{\gamma t^2}f\right\|_{L_q(\R^n;Y)}^q\ud z \ud t \bigg)^{\frac{1}{q}}.
\end{align}
Since the operators $\{z\cdot\nabla\}_{z\in \R^n}$ and $\{H_s\}_{s\in [0,\infty)}$ commute,  for every $z\in B_X$ we have
\begin{align}\label{use lower john}
\bigg(\int_0^\infty t^{q-1}\left\|(z\cdot\nabla)^2H_{\gamma t^2}f\right\|_{L_q(\R^n;Y)}^q\ud t \bigg)^{\frac{1}{q}}&=\frac{1}{\sqrt{\gamma}}\bigg(\int_0^\infty \left\|\sqrt{s}(z\cdot\nabla)H_{s}(z\cdot\nabla)f\right\|_{L_q(\R^n;Y)}^q\frac{\ud s}{s} \bigg)^{\frac{1}{q}}\nonumber\\
&\lesssim \frac{|z|\mathfrak{m}_q(Y)}{\sqrt{\gamma}} \left\|z\cdot \nabla f\right\|_{L_q(\R^n;Y)},
\end{align}
where in the final step of~\eqref{use lower john} we used~\eqref{eq:second spatial} with $f$ replaced by $(z\cdot\nabla) f$. The desired estimate~\eqref{eq:I2 bound} now follows from a substitution of~\eqref{use lower john} into~\eqref{eq:I2 taylor}.
\end{proof}

\begin{proof}[Proof of Theorem~\ref{thm:lip doro intro}] By rescaling we may assume that $\|f\|_{\Lip(X,Y)}=1$, and by convolving $f$ with a smooth bump function of arbitrarily small support we may assume that $f$ is itself smooth. Then $\|z\cdot \nabla f(x)\|_Y\le \|z\|_X\1_{\supp(f)}(x)$ for every $x,z\in \R^n$. So, $\|z\cdot \nabla f\|_{L_q(\R^n;Y)}^q\le \|z\|_X^q|\supp(f)|$.  Hence,
$$
\fint_{S^{n-1}} \left\|\sigma\cdot \nabla f\right\|_{L_q(\R^n;Y)}\ud \sigma\le
|\supp(f)|^{\frac{1}{q}}\fint_{S^{n-1}} \|\sigma\|_X\ud \sigma= M(X)|\supp(f)|^{\frac{1}{q}},
$$
and
$$
\bigg(\fint_{B_X} |x|^q\left\|x\cdot \nabla f\right\|^q_{L_q(\R^n;Y)}\ud x \bigg)^{\frac{1}{q}}\le \bigg(\fint_{B_X} |x|^q\ud x \bigg)^{\frac{1}{q}}|\supp(f)|^{\frac{1}{q}}=I_q(X)|\supp(f)|^{\frac{1}{q}}.
$$
A substitution of these estimates into~\eqref{eq:W1p doro} shows that for every $\gamma\in (0,\infty)$ we have
\begin{multline}\label{eq:to minimize gamma MI}
\bigg(\int_{\R^n}\int_0^\infty \fint_{x+tB_X}\frac{\|f(y)-\Taylor_x^1(H_{\gamma t^2}f)(y)\|_Y^q}{t^{q+1}}\ud y \ud t \ud x \bigg)^{\frac{1}{q}}\\\lesssim \mathfrak{m}_q(Y)\left(\sqrt{\gamma n}M(X)+\frac{I_q(X)}{\sqrt{\gamma}}\right)|\supp(f)|^{\frac{1}{q}}.
\end{multline}
The value of $\gamma$ in~\eqref{eq:def gamma} minimizes the right hand side of~\eqref{eq:to minimize gamma MI}, thus yielding the desired estimate~\eqref{eq:our new doro}.
\end{proof}

\begin{proof}[Proof of Theorem~\ref{thm:local doro}] Suppose that $f:B_X\to Y$ satisfies $\|f\|_{\Lip(B_X,Y)}\le 1$. Since Theorem~\ref{thm:local doro} is translation-invariant, we may assume without loss of generality that $f(0)=0$. Define $F:X\to Y$ by setting $F(x)=f(x)$ for $x\in B_X$ and $F(x)=\max\{0,n+1-n\|x\|_X\}\cdot f(x/\|x\|_X)$ for $x\in \R^n\setminus B_X$. Then $\supp(F)\subset (1+1/n)B_X$ and it is straightforward to check that $\|F\|_{\Lip(X,Y)}\le n+2$.

Fix $x\in \R^n$ with$\|x\|_X\le 1-1/(2n)$ and $t\in (0,\infty)$. Recalling the definition of $\gamma$ in~\eqref{eq:def gamma} and letting $C\in (0,\infty)$ be the constant of Lemma~\ref{lem:lip of heat}, we know that $\left\|\Taylor_x^1(H_{\gamma t^2F})\right\|_{\Lip(X,Y)}\le 2$ provided
\begin{equation}\label{eq:gamma t condition}
\gamma t^2=\frac{I_q(X)t^2}{\sqrt{n}M(X)}\le \frac{1}{4Cn^2\left(M(X)\sqrt{n}+b(X)\sqrt{\log (n+2)}\right)^2}.
\end{equation}
Since by~\eqref{eq:quote LMS} (with $p=1$) we have $b(X)\lesssim M(X)\sqrt{n}$, there exists a universal constant $c\in (0,1/4)$ such that the condition~\eqref{eq:gamma t condition} is satisfied for every $t\in (0,T]$, where $T$ is defined in~\eqref{eq:def T}. Hence, for  $t\in (0,T]$ the mapping $\Taylor_x^1(H_{\gamma t^2}F)\in \mathscr{A}(X,Y)$ is $2$-Lipschitz. Consequently,
\begin{multline*}
\fint_r^{T} \bigg(\fint_{\left(1-\frac{1}{2n}\right)B_X}\inf_{\substack{\Lambda\in \mathscr{A}(X,Y)\\\|\Lambda\|_{\Lip(X,Y)}\le 2}}\frac{\fint_{x+\rho B_X}\|f(y)-\Lambda(y)\|_Y^q}{\rho^{q}}\ud y\ud x\bigg) \frac{\ud \rho}{\rho} \\\le  \frac{1}{\left(1-\frac{1}{2n}\right)^n|B_X|\log\left(\frac{T}{r}\right)} \int_{\left(1-\frac{1}{2n}\right)B_X}\int_r^T \fint_{x+tB_X}\frac{\|F(y)-\Taylor_x^1(H_{\gamma t^2}F)(y)\|_Y^q}{t^{q+1}}\ud y \ud t \ud x.
\end{multline*}
Hence, by Theorem~\ref{thm:lip doro intro} (which we have already proved assuming the validity of Theorem~\ref{thm:LPspatial}) applied to $F$,  the left hand side of the desired inequality~\eqref{eq:desired local} is bounded from above by
$$
\frac{K^q |\supp(F)|(n+2)^q}{\left(1-\frac{1}{2n}\right)^n|B_X|\log\left(\frac{T}{r}\right)}\le \frac{2(3Kn)^q \left(1+\frac{1}{n}\right)^n}{\left(1-\frac{1}{2n}\right)^n|\log r|}\le \frac{(9Kn)^q}{|\log r|},
$$
where we used the fact that $\|F\|_{\Lip(X,Y)}\le n+2$, that the support of $F$ is contained in $(1+1/n)B_X$ and therefore $|\supp(F)|\le (1+1/n)^n|B_X|$, that $r\le T^2\le 1$ and therefore $\log(T/r)\ge |\log r|/2$, that the sequence $\{(1+1/n)^n/(1-1/(2n))^n=((1+3/(2n-1))^{(2n-1)/3+1})^{3n/(2n+2)}\}_{n=1}^\infty$ is decreasing and therefore bounded from above by $4$, and that $q\ge 2$. This completes the proof of Theorem~\ref{thm:local doro}.
\end{proof}

\section{$\mathcal{G}$-function estimates}\label{sec:segroup proof}

Here we shall prove Theorem~\ref{thm:LPtemporal} and Theorem~\ref{thm:LPspatial}. The argument naturally splits into a part that holds true for general symmetric diffusion semigroups in Section~\ref{sec:diffusion step} below, followed by steps that use more special properties of the heat semigroups in Section~\ref{sec:spatial heat} and Section~\ref{sec:temporal heat} below.

\subsection{Diffusion semigroups}\label{sec:diffusion step} Following Stein~\cite[page~65]{Stein:topics}, a symmetric diffusion semigroup on a measure space $(\mathscr{M},\mu)$ is a one-parameter family of self-adjoint linear operators $\{T_t\}_{t\in [0,\infty)}$ that map (real-valued) measurable functions on $(\mathscr{M},\mu)$ to measurable functions on $(\mathscr{M},\mu)$, such that $T_0$ is the identity operator and $T_{t+s}=T_tT_s$ for every $s,t \in [0,\infty)$. Moreover, it is required that for every $t\in [0,\infty)$ and $p\in [1,\infty]$ the operator $T_t$ maps $L_p(\mu)$ to $L_p(\mu)$  with $\|T_t\|_{L_p(\mu)\to L_p(\mu)}\le 1$, for every $f\in L_2(\mu)$ we have $\lim_{t\to 0} T_tf=f$ (with convergence in $L_2(\mu)$), for every nonnegative measurable function $f:\mathscr{M}\to \R$ the function $T_tf$ is also nonnegative, and that $T_t\1_{\mathscr{M}}=\1_{\mathscr{M}}$.

As explained in~\cite[page~433]{MTX}, for every Banach space $Y$ the above semigroup $\{T_t\}_{t\in [0,\infty)}$ extends to a semigroup of contractions on $L_q(\mu;Y)$ for every $q\in [1,\infty]$. This is achieved by considering the tensor product  $T_t\otimes \mathrm{Id}_Y$, but in what follows it will be convenient to slightly abuse  notation by identifying  $T_t\otimes \mathrm{Id}_Y$ with $T_t$. Note that by a standard density argument for every $q\in [1,\infty)$ and $f\in L_q(\mu,Y)$ the mapping $t\mapsto T_tf$ is continuous as a mapping from $[0,\infty)$ to $L_q(\mu;Y)$.

\begin{proposition}\label{prop:Rota} Fix $q\in [2,\infty)$ and a Banach space $(Y,\|\cdot\|_Y)$ of martingale cotype $q$.  Suppose that $\{T_t\}_{t\in [0,\infty)}$ is a symmetric diffusion semigroup on a measure space $(\mathscr{M},\mu)$. Then for every $f\in L_q(\mu;Y)$, if $\{t_j\}_{j\in\Z}\subset (0,\infty)$ is an increasing sequence then
\begin{equation}\label{eq:semigroup version UC}
  \bigg(\sum_{j\in\Z}\Norm{T_{t_j}f-T_{t_{j+1}}f}{L_q(\mu;Y)}^q\bigg)^{\frac{1}{q}}\leq\mathfrak{m}_{q}(Y)\Norm{f}{L_q(\mu;Y)}.
\end{equation}
\end{proposition}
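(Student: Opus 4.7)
The strategy is to use Rota's dilation theorem to represent the orbit $\{T_{t_j}f\}_{j\in\Z}$ as the image under a conditional expectation of a (reverse) martingale, and then to invoke Pisier's martingale inequality (Theorem~\ref{thm:pisier inequality}). First I will appeal to Rota's dilation theorem for symmetric diffusion semigroups (see~\cite{Stein:topics}, Chapter~IV): this produces a probability space $(\tilde{\mathscr{M}},\tilde{\mu})$, a sub-$\sigma$-algebra $\mathscr{F}\subset\tilde{\mathscr{M}}$ carrying an isometric embedding $i:L_q(\mu)\hookrightarrow L_q(\tilde{\mu})$ whose range is precisely the $\mathscr{F}$-measurable $L_q$ functions, and a decreasing family $\{\mathscr{G}_s\}_{s\in[0,\infty)}$ of sub-$\sigma$-algebras such that
\[
i(T_{2s}f)=\E\bigl[\E[i(f)\mid\mathscr{G}_s]\bigm|\mathscr{F}\bigr],\qquad s\in[0,\infty).
\]
Tensoring $i$ and each conditional expectation with $\mathrm{Id}_Y$ transfers this identity to the $Y$-valued Lebesgue--Bochner setting, with $i$ remaining isometric and the conditional expectations remaining contractive on $L_q(\tilde{\mu};Y)$.

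Next, I will set $M_j\eqdef\E[i(f)\mid\mathscr{G}_{t_j/2}]$ for $j\in\Z$. Since $\{\mathscr{G}_{t_j/2}\}_{j\in\Z}$ is decreasing in $j$, the family $\{M_j\}_{j\in\Z}$ is a reverse martingale; equivalently, $\{M_{-j}\}_{j\in\Z}$ is a forward martingale adapted to an increasing filtration. Applying Pisier's martingale inequality (Theorem~\ref{thm:pisier inequality}) to each finite truncation $\{M_{-N},\ldots,M_N\}$ and letting $N\to\infty$ by monotone convergence of the partial sums yields
\[
\bigg(\sum_{j\in\Z}\|M_{j+1}-M_j\|_{L_q(\tilde{\mu};Y)}^q\bigg)^{\frac{1}{q}}\le \mathfrak{m}_q(Y)\sup_{j\in\Z}\|M_j\|_{L_q(\tilde{\mu};Y)}\le \mathfrak{m}_q(Y)\|f\|_{L_q(\mu;Y)},
\]
where the final bound uses contractivity of conditional expectations together with $\|i(f)\|_{L_q(\tilde{\mu};Y)}=\|f\|_{L_q(\mu;Y)}$.

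To conclude, I will apply the $L_q$-contractive conditional expectation $\E[\cdot\mid\mathscr{F}]$ to each increment $M_{j+1}-M_j$: the Rota identity yields $\E[M_{j+1}-M_j\mid\mathscr{F}]=i(T_{t_{j+1}}f-T_{t_j}f)$, so that
\[
\|T_{t_{j+1}}f-T_{t_j}f\|_{L_q(\mu;Y)}=\|i(T_{t_{j+1}}f-T_{t_j}f)\|_{L_q(\tilde{\mu};Y)}\le \|M_{j+1}-M_j\|_{L_q(\tilde{\mu};Y)}.
\]
Raising to the $q$-th power, summing over $j\in\Z$, and taking $q$-th roots gives~\eqref{eq:semigroup version UC}. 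The main obstacle is not any single computational step but the invocation of Rota's dilation theorem, which is a deep classical tool; once that representation is at hand, the remainder is a textbook application of Pisier's inequality. Minor bookkeeping involves the factor of $2$ in Rota's identity (absorbed by the reparametrization $s=t_j/2$, which preserves monotonicity) and the two-sided indexing over $\Z$ (handled by the truncation--limit argument above).
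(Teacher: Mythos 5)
Your proposal follows the same overall strategy as the paper's: use Rota's dilation to represent the semigroup orbit, apply Pisier's martingale inequality (Theorem~\ref{thm:pisier inequality}) to the resulting reverse martingale, and project back through a contractive conditional expectation. The martingale bookkeeping in your second paragraph and the contraction step in your third are both correct. The issue lies in the first paragraph: you invoke a \emph{continuous-time} Rota dilation, asserting the existence of a single decreasing family $\{\mathscr{G}_s\}_{s\in[0,\infty)}$ of $\sigma$-algebras that simultaneously dilates every $T_{2s}$, $s\ge 0$. That is not what Stein's Chapter~IV proves. Stein (following Rota) constructs the dilation for the even powers $\{Q^{2k}\}_{k\in\N}$ of one fixed positive self-adjoint Markov operator $Q$; extending this to a consistent dilating filtration over a continuum of times is a genuinely stronger statement that requires its own argument and is not available in the reference you cite. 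You cannot simply declare that it holds for ``the semigroup.''

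The paper's proof sidesteps this by an elementary but essential reduction that you should incorporate: first pass to a finite subsequence $0<t_0<\dots<t_N$; then use the strong continuity of $t\mapsto T_t f$ in $L_q(\mu;Y)$ to perturb so that each $t_j$ is an integer multiple of a common $\delta>0$; finally apply the discrete Rota theorem to the single operator $Q=T_{\delta/2}$, whose dilation $\{E_k\}_{k\in\N}$ is automatically consistent because it comes from one application of the theorem. After that, the argument you wrote goes through verbatim. A further, smaller correction: when $\mu$ is an infinite measure (as in the paper's application with $\mu$ Lebesgue measure on $\R^n$), the dilation lives on a $\sigma$-finite measure space rather than a probability space, so Pisier's inequality must be applied to martingales over $\sigma$-finite measures; the paper addresses exactly this point in the remark immediately following the proposition by citing~\cite[Theorem~3.95]{HNVW}.
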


\begin{proof}
It suffices to prove~\eqref{eq:semigroup version UC} for finite sums, i.e., that for every $0<t_0<t_1<\ldots<t_N$ we have
\begin{equation}\label{eq:finite semigroup version UC}
   \bigg(\sum_{j=0}^N\Norm{T_{t_j}f-T_{t_{j+1}}f}{L_q(\mu;Y)}^q\bigg)^{\frac{1}{q}}\leq\mathfrak{m}_{q}(Y)\Norm{f}{L_q(\mu;Y)}.
\end{equation}
Since $t\mapsto T_t f$ is a continuous mapping from $[0,\infty)$ to $L_q(\mu;Y)$, we may further assume by approximation that each $t_j$ is an integer multiple of some $\delta\in (0,\infty)$, i.e., that  $t_j= k_j\delta$ with $k_j\in \N$.

Denoting $Q\eqdef T_{\delta/2}$, the desired bound~\eqref{eq:finite semigroup version UC} can be rewritten as follows.
\begin{equation}\label{eq:Q version}
  \bigg(\sum_{j=0}^N\left\|Q^{2k_j}f-Q^{2k_{j+1}}f\right\|_{L_q(\mu;Y)}^q\bigg)^{\frac{1}{q}}\leq\mathfrak{m}_{q}(Y)\Norm{f}{L_q(\mu;Y)}.
\end{equation}
The operator $Q$ satisfies the assumptions of Rota's representation theorem~\cite{Rota} in the form presented by Stein~\cite[page~106]{Stein:topics} (see~\cite[Theorem 2.5]{MTX} for an explanation of the vector-valued extension that is relevant to the present context), and hence its even powers admit the following representation.
\begin{equation}\label{eq:Q representation}
\forall\, k\in \N,\qquad   Q^{2k}=J^{-1}\circ E'\circ E_k\circ J,
\end{equation}
where
\begin{itemize}
  \item  $J:L_q(\mu;Y)\to L_q(\mathscr{S},\mathscr{F}',\nu;Y)\subset L_q(\mathscr{S},\mathscr{F},\nu;Y)$ is an isometric isomorphism for some $\sigma$-finite $\sigma$-algebras $\mathscr{F}'\subset\mathscr F$ of a measure space $(\mathscr{S},\mathscr{F},\nu)$,
  \item $E_k:L_q(\mathscr{S},\mathscr{F},\nu;Y)\to L_q(\mathscr{S},\mathscr{F}_k,\nu;Y)\subset L_p(\mathscr{S},\mathscr{F},\nu;Y)$ is the ``conditional expectation'' (naturally extended from a probabilistic to a $\sigma$-finite setting), where $\mathscr{F}\supset\mathscr{F}_1\supset\mathscr{F}_2\supset\cdots$ is a decreasing sequence of $\sigma$-finite sub-$\sigma$-algebras, and
  \item $E':L_q(\mathscr{S},\mathscr{F},\nu;Y)\to L_q(\mathscr{S},\mathscr{F}',\nu;Y)\subset L_q(\mathscr{S},\mathscr{F},\nu;Y)$ is another such ``conditional expectation'' for the sub-$\sigma$-algebra $\mathscr{F}'\subset\mathscr F$.
\end{itemize}
Consequently, the desired estimate~\eqref{eq:Q version} is proven as follows.
\begin{multline}\label{eq:use rota punchline}
  \sum_{j=0}^{N}\left\|(Q^{2k_j}-Q^{2k_{j+1}})f\right\|_{L_q(\mu;Y)}^q
  =\sum_{j=0}^{N}\left\|J^{-1}E'(E_{k_{j}}-E_{k_{j+1}}) Jf\right\|_{L_q(\mu;Y)}^q \\
  \leq \sum_{j=0}^{N}\left\|(E_{k_{j}}-E_{k_{j+1}}) Jf\right\|_{L_q(\mathscr{S},\mathscr{F},\nu;Y)}^q
  \leq\mathfrak{m}_{q}(Y)^q\Norm{Jf}{L_q(\mathscr{S},\mathscr{F}',\nu;Y)}^q=\mathfrak{m}_{q}(Y)^q\Norm{f}{L_q(\mu;Y)}^q,
\end{multline}
where the first step of~\eqref{eq:use rota punchline} uses~\eqref{eq:Q representation}, the second step of~\eqref{eq:use rota punchline} uses the fact that $J^{-1}$ is an isometry and that $E'$ is a contraction, the third step of~\eqref{eq:use rota punchline} uses the definition of $\mathfrak{m}_q(Y)$ applied to the (reverse) martingale $\{E_{k_j}Jf)\}_{j=0}^N$, and the final step of~\eqref{eq:use rota punchline} uses the fact that $J$ is an isometry.
\end{proof}

\begin{remark}
The above argument used the definition of martingale cotype $q$ when the martingales are with respect to $\sigma$-finite measures rather than probability measures. While the traditional way to define martingale cotype $q$ uses probability measures, as we have done in Section~\ref{sec:Y geom}, this is equivalent (with the same constant)  to  the case of $\sigma$-finite measures by a general approximation result~\cite[Theorem~3.95]{HNVW}. Alternatively, one can check that the available proofs of Pisier's inequality (Theorem~\ref{thm:pisier inequality}) extend effortlessly to the setting of martingales with respect to $\sigma$-finite measures.
\end{remark}

\begin{lemma}\label{lem:Rota} Fix $q\in [2,\infty)$, $\alpha\in (1,\infty)$ and a Banach space $(Y,\|\cdot\|_Y)$ of martingale cotype $q$. Suppose that $\{T_t\}_{t\in [0,\infty)}$ is a symmetric diffusion semigroup on a measure space $(\mathscr{M},\mu)$. Then
\begin{equation*}
\forall\, f\in L_q(\mu;Y),\qquad  \bigg(\int_0^\infty\Norm{(T_t-T_{\alpha t})f}{L_q(\R^n;Y)}^q\frac{\ud t}{t}\bigg)^{\frac{1}{q}}\leq(\log\alpha)^{\frac{1}{q}}\mathfrak{m}_{q}(Y)\Norm{f}{L_q(\mu;Y)}.
\end{equation*}
\end{lemma}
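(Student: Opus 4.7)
The plan is to deduce the lemma from Proposition~\ref{prop:Rota} via a Fubini-type averaging argument, exploiting the multiplicative (scale) structure of the integral $\int_0^\infty (\cdot)\, dt/t$.

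The central observation is that if we fix any starting point $s\in[1,\alpha]$ and consider the geometric sequence $t_j = s\alpha^j$ for $j\in\Z$, then $t_{j+1}=\alpha t_j$, so Proposition~\ref{prop:Rota} applied to this sequence yields
\begin{equation*}
\sum_{j\in\Z}\Norm{(T_{s\alpha^j}-T_{\alpha\cdot s\alpha^j})f}{L_q(\mu;Y)}^q \leq \mathfrak{m}_q(Y)^q\Norm{f}{L_q(\mu;Y)}^q.
\end{equation*}
This is a discrete, ``single-scale-orbit'' version of the inequality we want.

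Next I would integrate this uniform-in-$s$ bound against the measure $ds/s$ on $[1,\alpha]$, which has total mass $\log\alpha$, and swap the order of summation and integration. The key change of variables is $u=s\alpha^j$, which sends $ds/s$ to $du/u$ and sends the interval $[1,\alpha]$ to $[\alpha^j,\alpha^{j+1}]$. Since the intervals $\{[\alpha^j,\alpha^{j+1}]\}_{j\in\Z}$ tile $(0,\infty)$ with respect to the measure $du/u$, this produces exactly the desired integral:
\begin{equation*}
\int_1^\alpha \sum_{j\in\Z}\Norm{(T_{s\alpha^j}-T_{\alpha\cdot s\alpha^j})f}{L_q(\mu;Y)}^q\,\frac{ds}{s}
=\int_0^\infty \Norm{(T_u-T_{\alpha u})f}{L_q(\mu;Y)}^q\,\frac{du}{u}.
\end{equation*}
Combining this identity with the uniform bound from the previous step gives
\begin{equation*}
\int_0^\infty \Norm{(T_u-T_{\alpha u})f}{L_q(\mu;Y)}^q\,\frac{du}{u}\leq (\log\alpha)\,\mathfrak{m}_q(Y)^q\Norm{f}{L_q(\mu;Y)}^q,
\end{equation*}
and taking $q$-th roots yields the claim.

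There is no real obstacle here beyond verifying measurability of $u\mapsto \|(T_u-T_{\alpha u})f\|_{L_q(\mu;Y)}$ so that Fubini is legitimate; this follows from the continuity of $t\mapsto T_t f$ as an $L_q(\mu;Y)$-valued map on $[0,\infty)$, which is noted in Section~\ref{sec:diffusion step}. So the lemma is essentially a geometric averaging of Proposition~\ref{prop:Rota}, turning the discrete cotype-type sum over a single geometric sequence into a continuous square-function-type integral, at the cost of the factor $(\log\alpha)^{1/q}$ coming from the length of the averaging interval in the $ds/s$ measure.
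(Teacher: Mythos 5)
Your proof is correct and is essentially the paper's argument presented in the reverse direction: the paper starts from $\int_0^\infty \|(T_t-T_{\alpha t})f\|^q\,\mathrm{d}t/t$, splits $(0,\infty)$ into the multiplicative tiles $[\alpha^j,\alpha^{j+1}]$, and then applies Proposition~\ref{prop:Rota} with $t_j=\alpha^j t$ under the integral over $[1,\alpha]$, which is exactly your change of variables $u=s\alpha^j$ run backwards. The content and the source of the $(\log\alpha)^{1/q}$ factor (the $\mathrm{d}s/s$ mass of $[1,\alpha]$) are identical.
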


\begin{proof} The desired estimate is proven by the following computation.
\begin{multline}\label{eq:sum to integral}
  \int_0^\infty\Norm{(T_t-T_{\alpha t})f}{L_q(\mu;Y)}^q\frac{\ud t}{t}
  =\sum_{j\in\Z}\int_{\alpha^j}^{\alpha^{j+1}}\Norm{(T_t-T_{\alpha t})f}{L_q(\mu;Y)}^q\frac{\ud t}{t} \\
  =\int_{1}^{\alpha}\sum_{j\in\Z}\left\|(T_{\alpha^j t}-T_{\alpha^{j+1} t})f\right\|_{L_q(\mu;Y)}^q\frac{\ud t}{t}
  \leq \mathfrak{m}_{q}(Y)^q\Norm{f}{L_q(\mu;Y)}^q\int_{1}^{\alpha}\frac{\ud t}{t},
\end{multline}
where the last step of~\eqref{eq:sum to integral} is an application of Proposition~\ref{prop:Rota} with $t_j=\alpha^j t$.
\end{proof}

\subsection{The spatial derivatives of the heat semigroup}\label{sec:spatial heat} Here we shall prove Theorem~\ref{thm:LPtemporal}.

\begin{lemma}\label{lem:spatial vec} Fix $q\in [1,\infty]$ and a Banach space $(Y,\|\cdot\|_Y)$. For every $\vec{f}\in \ell_q^n(L_q(\R^n;Y))$ we have
\begin{equation}\label{eq:use stirling}
\left\|\sqrt{t}\operatorname{div} H_t\vec{f}\,\right\|_{L_q(\R^n;Y)}\le \frac{\Gamma\left(\frac{n+1}{2}\right)}{\Gamma\left(\frac{n}{2}\right)}\fint_{S^{n-1}}\left\|\sigma\cdot\vec{f}\,\right\|_{L_q(\R^n;Y)}\ud\sigma\lesssim\sqrt{n}\fint_{S^{n-1}}
\left\|\sigma\cdot\vec{f}\,\right\|_{L_q(\R^n;Y)}\ud\sigma.
\end{equation}
\end{lemma}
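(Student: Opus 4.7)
The plan is to reduce the divergence of the heat evolute to a radial-times-spherical integral against the gradient of the heat kernel, and then apply Minkowski's integral inequality to bring the $L_q(\R^n;Y)$-norm inside.

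First I would write $\sqrt{t}\operatorname{div}H_t\vec{f}=\sum_j \sqrt{t}(\partial_j h_t)\ast f_j$ and exploit the elementary identity
\begin{equation*}
\partial_j h_t(z)=-\frac{z_j}{2t}h_t(z),
\end{equation*}
which gives the pointwise representation
\begin{equation*}
\sqrt{t}\operatorname{div}H_t\vec{f}(x)=-\frac{1}{2\sqrt{t}}\int_{\R^n} h_t(z)\bigl(z\cdot\vec{f}(x-z)\bigr)\ud z.
\end{equation*}
Passing to polar coordinates $z=r\sigma$ with $r\in (0,\infty)$ and $\sigma\in S^{n-1}$, and using that $h_t(r\sigma)=(4\pi t)^{-n/2}e^{-r^2/(4t)}$ is radial, I would rewrite the inner integral as
\begin{equation*}
-\frac{1}{2\sqrt{t}(4\pi t)^{\frac{n}{2}}}\int_0^\infty r^n e^{-\frac{r^2}{4t}}\int_{S^{n-1}} \sigma\cdot\vec{f}(x-r\sigma)\ud\sigma\,\ud r.
\end{equation*}

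Next I would take the $L_q(\R^n;Y)$-norm in $x$, applying Minkowski's integral inequality successively in $r$ and then in $\sigma$, and use translation-invariance of the Lebesgue measure to replace $\|\sigma\cdot\vec{f}(\cdot-r\sigma)\|_{L_q(\R^n;Y)}$ by $\|\sigma\cdot\vec{f}\|_{L_q(\R^n;Y)}$, which eliminates the dependence on $r$. This decouples the estimate into the product of a scalar radial integral and the desired spherical average. The radial factor is computed via the substitution $u=r^2/(4t)$:
\begin{equation*}
\int_0^\infty r^n e^{-\frac{r^2}{4t}}\ud r=2^n t^{\frac{n+1}{2}}\Gamma\!\left(\frac{n+1}{2}\right),
\end{equation*}
so that the overall constant in front of $\int_{S^{n-1}}\|\sigma\cdot\vec{f}\,\|_{L_q}\,\ud\sigma$ simplifies to $\Gamma((n+1)/2)/(2\pi^{n/2})$. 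Recalling $|S^{n-1}|=2\pi^{n/2}/\Gamma(n/2)$, this is exactly $\Gamma((n+1)/2)/\Gamma(n/2)$ times the spherical average $\fint_{S^{n-1}}$, which is the first inequality in the claimed bound.

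Finally, the asymptotic $\Gamma((n+1)/2)/\Gamma(n/2)\lesssim\sqrt{n}$ is a direct application of Stirling's formula, yielding the second inequality. No step is really an obstacle; the main thing to be careful about is the order of the two Minkowski steps and keeping track of the normalization constants so that the $|S^{n-1}|$ factor absorbs the $\pi^{n/2}$ coming from the heat kernel, producing the clean Gamma-function ratio.
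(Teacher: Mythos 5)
Your proposal is correct and takes essentially the same approach as the paper: both exploit the identity $\partial_j h_t(z)=-z_jh_t(z)/(2t)$ to write $\div H_t\vec f$ as a convolution against $z\cdot\vec f$, then use translation invariance plus Minkowski's integral inequality and polar coordinates to reduce to a radial Gamma-function integral times the spherical average, finishing with Stirling. The only difference is cosmetic: you pass to polar coordinates before applying Minkowski, while the paper applies Minkowski (in Cartesian form, after rescaling $y=\sqrt{t}\,z$) and then goes to polar coordinates; the computation and constants agree.
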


\begin{proof}  Observe that $\partial_j h_t(y)=-y_jh_t(y)/(2t)$ for every $j\in \n$, $y=(y_1,\ldots,y_n)\in \R^n$ and $t\in [0,\infty)$. Hence, for every $x\in \R^n$ and $t\in [0,\infty)$ we have
\begin{equation}\label{eq:divergence heat}
\div H_t \vec{f}(x)=\sum_{j=1}^n \partial_j h_t*f_j(x)=-\frac{1}{2t}\int_{\R^n} h_t(y) y\cdot \vec{f}(x-y)\ud y=-\frac{1}{2t}\int_{\R^n} h_t(y) y\cdot \vec{f}_y(x)\ud y,
\end{equation}
where $\vec{f}_y:\R^n\to \ell_q^n(L_q(\R^n;Y))$ is defined by setting $\vec{f}_y(x)=\vec{f}(x-y)$ for every $x,y\in \R^n$.

By translation invariance $\left\|y\cdot \vec{f}_y\right\|_{L_q(\R^n;Y)}=\left\|y\cdot \vec{f}\,\right\|_{L_q(\R^n;Y)}$ for all $y\in \R^n$, so~\eqref{eq:divergence heat} implies that
\begin{multline*}\label{eq:gamma ratio}
\left\|\sqrt{t}\div H_t\vec{f}\,\right\|_{L_q(\R^n;Y)}\le \frac{1}{2\sqrt{t}}\int_{\R^n} h_t(y) \left\| y\cdot \vec{f}\,\right\|_{L_q(\R^n;Y)}\ud y=\frac12 \int_{\R^n} h_1(z) \left\| z\cdot \vec{f}\,\right\|_{L_q(\R^n;Y)}\ud z\\
=\bigg(\frac{\pi^{\frac{n}{2}}}{\Gamma\left(\frac{n}{2}\right)}\int_{0}^\infty r^n\frac{e^{-\frac{r^2}{4}}}{(4\pi)^{\frac{n}{2}}}\ud r\bigg) \fint_{S^{n-1}}\left\|\sigma\cdot\vec{f}\,\right\|_{L_q(\R^n;Y)}\ud\sigma
=\frac{\Gamma\left(\frac{n+1}{2}\right)}{\Gamma\left(\frac{n}{2}\right)}\fint_{S^{n-1}}\left\|\sigma\cdot\vec{f}\,\right\|_{L_q(\R^n;Y)}\ud\sigma,
\end{multline*}
where the penultimate step follows from passing to polar coordinates. This completes the proof of the first inequality in~\eqref{eq:use stirling}. The second inequality in~\eqref{eq:use stirling} follows from Stirling's formula.
\end{proof}

\begin{lemma}\label{lem:directional heat} Suppose that $q\in [1,\infty]$. For every $f\in L_q(\R^n;Y)$, $t\in [0,\infty)$ and $z\in\R^n$ we have
\begin{equation*}
   \left\|\sqrt{t}(z\cdot\nabla)H_tf\right\|_{L_q(\R^n;Y)}\le\frac{\abs{z}}{\sqrt{\pi}} \Norm{f}{L_q(\R^n;Y)}.
\end{equation*}
\end{lemma}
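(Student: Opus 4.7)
The plan is to reduce the estimate to an elementary convolution inequality, together with an explicit moment computation for the heat kernel. The key observation is that the heat kernel $h_t$ is smooth and integrable, so $H_tf$ is differentiable even when $f$ is merely in $L_q(\R^n;Y)$, and the directional derivative passes to the kernel:
\begin{equation*}
(z\cdot\nabla)H_tf(x)=\big((z\cdot\nabla)h_t\big)*f(x),
\end{equation*}
with the elementary identity $(z\cdot\nabla)h_t(y)=-\frac{z\cdot y}{2t}h_t(y)$ for every $y\in\R^n$.

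With this in hand, I would apply the $Y$-valued version of Young's convolution inequality (equivalently, Minkowski's integral inequality in $L_q(\R^n;Y)$), which gives
\begin{equation*}
\big\|(z\cdot\nabla)H_tf\big\|_{L_q(\R^n;Y)}
\le\big\|(z\cdot\nabla)h_t\big\|_{L_1(\R^n)}\cdot\Norm{f}{L_q(\R^n;Y)}
=\frac{1}{2t}\int_{\R^n}|z\cdot y|h_t(y)\ud y\cdot\Norm{f}{L_q(\R^n;Y)}.
\end{equation*}
The remaining step is a one-dimensional Gaussian moment computation: writing $y=\sqrt{2t}\,G$ with $G$ standard Gaussian on $\R^n$, the random variable $z\cdot G$ is centered Gaussian with variance $|z|^2$, so $\E|z\cdot G|=|z|\sqrt{2/\pi}$, giving
\begin{equation*}
\int_{\R^n}|z\cdot y|h_t(y)\ud y=\sqrt{2t}\,\E|z\cdot G|=2|z|\sqrt{t/\pi}.
\end{equation*}
Multiplying the two displayed estimates by $\sqrt{t}$ and combining, the $t$-dependence cancels and one obtains exactly the claimed constant $|z|/\sqrt{\pi}$.

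There is essentially no obstacle here: every step is a direct computation and the only place where one might slip is ensuring that $(z\cdot\nabla)H_tf$ really is given by convolution with $(z\cdot\nabla)h_t$ when $f$ is only $L_q$-integrable, which is standard since $\nabla h_t$ and all its translates are integrable and one can differentiate under the integral sign. This lemma is of course closely parallel in spirit to Lemma~\ref{lem:spatial vec}, the difference being that a directional derivative $z\cdot\nabla$ (rather than a divergence) is placed on the outside, which is why the proof is actually simpler: no averaging over $S^{n-1}$ is needed, and the dimensional factor $\sqrt{n}$ from Stirling's formula does not appear.
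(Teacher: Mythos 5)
Your proof is correct and follows essentially the same route as the paper: convolve with $(z\cdot\nabla)h_t$, pass the $L_q(\R^n;Y)$ norm through the convolution by Young/Minkowski, and compute the first absolute moment $\int_{\R^n}|z\cdot y|\,h_t(y)\ud y$. The paper performs that last computation by rescaling to $h_1$ and using rotation invariance to reduce to a one-dimensional integral, whereas you parametrize by a standard Gaussian and quote $\E|z\cdot G|=|z|\sqrt{2/\pi}$ — a cosmetic difference only, yielding the same constant.
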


\begin{proof} Since for every $y\in \R^n$ we have $(z\cdot\nabla)h_t(y)=-(z\cdot y) h_t(y)/(2t)$, every $x\in \R^n$ satisfies
\begin{equation*}
  (z\cdot\nabla)H_t f(x)=(z\cdot\nabla)h_t*f=-\frac{1}{2t}\int_{\R^n}(z\cdot y)h_t(y)f(x-y)\ud y=-\frac{1}{2t}\int_{\R^n}(z\cdot y)h_t(y)f_y(x)\ud y.
\end{equation*}
Consequently,
\begin{multline*}
  \left\|\sqrt{t}(z\cdot\nabla)H_t f\right\|_{L_q(\R^n;Y)}
  \leq\frac{1}{2\sqrt{t}}\int_{\R^n}\abs{z\cdot y}h_t(y)\Norm{f_y}{L_q(\R^n;Y)}\ud y \\
  =\frac{1}{2}\Norm{f}{L_q(\R^n;Y)} \int_{\R^n}\abs{z\cdot y}k_1(y)\ud y
  =\frac{\abs{z}}{2}\Norm{f}{L_q(\R^n;Y)}\int_{\R^n}\abs{y_1}k_1(y)\ud y=\frac{\abs{z}}{\sqrt{\pi}} \Norm{f}{L_q(\R^n;Y)}.\tag*{\qedhere}
\end{multline*}
\end{proof}

\begin{proof}[Proof of Theorem~\ref{thm:LPspatial}]
Since $q\in (1,\infty)$, we have $\lim_{t\to \infty} \left\|H_t\vec{f}\,\right\|_{L_q(\R^n;Y)^n}=0$. Consequently,
\begin{equation}\label{eq:vector telscope}
  H_t\vec f=\sum_{k=-1}^\infty \big(H_{2^{k+1} t}-H_{2^{k+2}t}\big)\vec f
  =\sum_{k=-1}^\infty H_{2^k t}\big(H_{2^k t}-H_{3\cdot 2^{k}t}\big)\vec f.
\end{equation}
By the triangle inequality in $L_q((0,\infty),\ud t/t;L_q(\R^n;Y))$, it follows from~\eqref{eq:vector telscope} that
\begin{align}
  \nonumber \bigg(\int_0^\infty\left\|\sqrt{t}\operatorname{div}H_t\vec{f}\,\right\|_{L_q(\R^n;Y)}^q\frac{\ud t}{t}\bigg)^{\frac{1}{q}}
   &\leq\sum_{k=-1}^\infty \bigg(\int_0^\infty\left\|\sqrt{t}\operatorname{div}H_{2^k t}\big(H_{2^k t}-H_{3\cdot 2^{k}t}\big)\vec{f}\,\right\|_{L_q(\R^n;Y)}^q\frac{\ud t}{t}\bigg)^{\frac{1}{q}} \\ \label{eq:change varaible to s}
   &=\sum_{k=-1}^\infty \frac{1}{2^{\frac{k}{2}}}\bigg(\int_0^\infty\left\|\sqrt{s}\operatorname{div}H_{s}(H_{s}-H_{3s})\vec{f}\,\right\|_{L_q(\R^n;Y)}^q\frac{\ud s}{s}\bigg)^{\frac{1}{q}}\\&\asymp \bigg(\int_0^\infty\left\|\sqrt{s}\operatorname{div}H_{s}(H_{s}-H_{3s})\vec{f}\,\right\|_{L_q(\R^n;Y)}^q\frac{\ud s}{s}\bigg)^{\frac{1}{q}},\label{eq:use telescope}
\end{align}
where in~\eqref{eq:change varaible to s} we made the change of variable $s=2^kt$ in each of the summands.

For every $s\in [0,\infty)$, an application of Lemma~\ref{lem:spatial vec} with $\vec{f}$ replaced by $(H_s-H_{3s})\vec{f}$ shows that
\begin{align}\label{lem:at time s}
\nonumber \left\|\sqrt{s}\operatorname{div}H_{s}(H_{s}-H_{3s})\vec{f}\,\right\|_{L_q(\R^n;Y)}&\lesssim \sqrt{n} \fint_{S^{n-1}}
\left\|\sigma\cdot (H_s-H_{3s})\vec{f}\, \right\|_{L_q(\R^n;Y)}\ud\sigma\\
&= \sqrt{n} \fint_{S^{n-1}}
\left\|(H_s-H_{3s})\left(\sigma\cdot \vec{f}\,\right)\right\|_{L_q(\R^n;Y)}\ud\sigma.
\end{align}
By combining~\eqref{eq:use telescope} and~\eqref{lem:at time s} we therefore conclude that
\begin{align}\label{eq:to use alpha lemma}
\nonumber\bigg(\int_0^\infty\left\|\sqrt{t}\operatorname{div}H_t\vec{f}\,\right\|_{L_q(\R^n;Y)}^q\frac{\ud t}{t}\bigg)^{\frac{1}{q}} &\lesssim \sqrt{n}\bigg(\int_0^\infty
\bigg(\fint_{S^{n-1}}\left\|(H_s-H_{3s})\left(\sigma\cdot \vec{f}\,\right)\right\|_{L_q(\R^n;Y)}\ud\sigma\bigg)^q\frac{\ud s}{s}\bigg)^{\frac{1}{q}}
\\\nonumber&\le \sqrt{n}\fint_{S^{n-1}}\bigg(\int_0^\infty\left\|(H_s-H_{3s})\left(\sigma\cdot \vec{f}\,\right)\right\|_{L_q(\R^n;Y)}^q\frac{\ud s}{s}\bigg)^{\frac{1}{q}}\ud\sigma
\\&\lesssim \sqrt{n}\cdot \mathfrak{m}_q(Y)\fint_{S^{n-1}}
\left\|\sigma\cdot \vec{f}\,\right\|_{L_q(\R^n;Y)}\ud\sigma,
\end{align}
where the penultimate step of~\eqref{eq:to use alpha lemma} uses the triangle inequality in  $L_q((0,\infty),\ud s/s)$ and the final step of~\eqref{eq:to use alpha lemma} uses Lemma~\ref{lem:Rota}. This completes the proof of~\eqref{eq:first spatial}.

To prove~\eqref{eq:second spatial}, by the triangle inequality applied to the identity~\eqref{eq:vector telscope} with $\vec{f}$ replaced by $f$, and making the same changes of variable as in~\eqref{eq:change varaible to s}, we see that
\begin{multline}\label{eq:final proof second spatial}
   \bigg(\int_0^\infty\left\|\sqrt{t}(z\cdot\nabla)H_t f\right\|_{L_q(\R^n;Y)}^q\frac{\ud t}{t}\bigg)^{\frac{1}{q}}
   \lesssim\bigg(\int_0^\infty\left\|\sqrt{s}(z\cdot\nabla)H_{s}(H_{s}-H_{3s})f\right\|_{L_q(\R^n;Y)}^q\frac{\ud s}{s}\bigg)^{\frac{1}{q}} \\
   \lesssim \abs{z}\bigg(\int_0^\infty  \left\|(H_{s}-H_{3s})f\right\|_{L_q(\R^n;Y)}^q \frac{\ud s}{s}\bigg)^{\frac{1}{q}}
   \lesssim\abs{z}\mathfrak{m}_{q}(Y)  \Norm{f}{L_q(\R^n;Y)},
\end{multline}
where the penultimate step of~\eqref{eq:final proof second spatial} uses Lemma~\ref{lem:directional heat} and the final step of~\eqref{eq:final proof second spatial} uses Lemma~\ref{lem:Rota}.
\end{proof}

\subsection{The time derivative of the heat semigroup}\label{sec:temporal heat} Here we shall prove Theorem~\ref{thm:LPspatial}.

\begin{lemma}\label{lem:time t convolution}
Suppose that $q\in [1,\infty]$. Then for every $f\in L_q(\R^n;Y)$ and $t\in [1,\infty)$ we have
$$
\left\|t\dot{H}_tf\right\|_{L_q(\R^n;Y)}\lesssim \sqrt{n}\cdot \left\|f\right\|_{L_q(\R^n;Y)}.
$$
\end{lemma}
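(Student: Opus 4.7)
My plan is to use the explicit formula for the time derivative of the heat kernel together with Young's convolution inequality, reducing everything to an $L^1$ bound on a Gaussian-weighted polynomial.

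First I would observe that since the heat kernel satisfies the heat equation, $\dot H_t f = h_t' * f$ where $h_t'(x) \eqdef \partial_t h_t(x)$, and a direct differentiation of $h_t(x)=(4\pi t)^{-n/2}e^{-|x|^2/(4t)}$ gives
\begin{equation*}
t\dot h_t(x) = \frac{1}{2}\left(\frac{|x|^2}{2t}-n\right)h_t(x).
\end{equation*}
By the vector-valued Young convolution inequality,
\begin{equation*}
\bNorm{t\dot H_t f}{L_q(\R^n;Y)} \le \bNorm{t\dot h_t}{L_1(\R^n)}\Norm{f}{L_q(\R^n;Y)},
\end{equation*}
so the lemma reduces to the scalar bound $\|t\dot h_t\|_{L_1(\R^n)}\lesssim\sqrt n$.

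To estimate $\|t\dot h_t\|_{L_1}$, I would make the change of variable $x=\sqrt{2t}\,y$, which turns $h_t(x)\ud x$ into the standard Gaussian measure on $\R^n$ and simultaneously transforms $|x|^2/(2t)$ into $|y|^2$. Writing $G$ for a standard Gaussian vector in $\R^n$, this yields
\begin{equation*}
\bNorm{t\dot h_t}{L_1(\R^n)} = \frac12\,\E\bigl[\,\bigl||G|^2-n\bigr|\,\bigr].
\end{equation*}
Since $\E[|G|^2]=n$ and $\mathrm{Var}(|G|^2)=2n$, the Cauchy--Schwarz inequality gives $\E[||G|^2-n|]\le \sqrt{2n}$, so $\|t\dot h_t\|_{L_1(\R^n)}\le \sqrt{n/2}$, completing the proof.

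There is no real obstacle here: the identity $t\dot h_t = \frac12(|x|^2/(2t)-n)h_t$ is the whole content of the lemma, and the $L_1$ norm of this expression is scale-invariant under $x\mapsto\sqrt{2t}\,y$, which is precisely why the bound is dimension-only and (in fact) holds for every $t\in(0,\infty)$; the hypothesis $t\ge 1$ in the statement is not needed for this particular estimate and will presumably only play a role in how the lemma is subsequently applied.
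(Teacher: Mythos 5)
Your proof is correct and follows the same overall skeleton as the paper's: express $t\dot h_t$ explicitly, reduce to the scalar bound $\|t\dot h_t\|_{L_1(\R^n)}\lesssim\sqrt n$ via Young's convolution inequality, then estimate that $L_1$ norm. The only genuine difference is in the last step. The paper evaluates $\|t\dot h_t\|_{L_1(\R^n)}$ \emph{exactly}: after passing to polar coordinates, the integrand $(n-r^2/(2t))h_t(r)r^{n-1}$ is recognized as $(4\pi t)^{-n/2}\partial_r(r^n e^{-r^2/4t})$, so the two pieces of the absolute value telescope to give the closed-form value $\frac{2}{\Gamma(n/2)}(n/(2e))^{n/2}$, which is then shown to be $\asymp\sqrt n$ via Stirling. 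You instead rescale by $x=\sqrt{2t}\,y$ to turn everything into a standard Gaussian moment $\tfrac12\E[||G|^2-n|]$ and then apply Cauchy--Schwarz using $\operatorname{Var}(|G|^2)=2n$, getting the bound $\sqrt{n/2}$. Your route avoids the exact evaluation and Stirling's formula entirely, at the cost of a slightly worse constant ($\sqrt{n/2}$ versus the true asymptotic $\sqrt{n/\pi}$), which is immaterial for the $\lesssim\sqrt n$ bound that the lemma asserts. Your remark about the hypothesis $t\ge 1$ is also accurate: the $L_1$ norm of $t\dot h_t$ is $t$-independent, and the paper's own proof does not use $t\ge 1$; in fact, the subsequent application inside the proof of Theorem~\ref{thm:LPtemporal} invokes the lemma for all $t\in(0,\infty)$, so the restriction in the statement is superfluous.
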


\begin{proof}
We claim that for every $t\in (0,\infty)$ we have
\begin{equation}\label{eq:time der heat kernel}
\left\|t\dot{h}_t\right\|_{L_1(\R^n)}=\frac{2}{\Gamma\left(\frac{n}{2}\right)}\left(\frac{n}{2e}\right)^{\frac{n}{2}}\asymp\sqrt{n}.
\end{equation}
This would imply the desired estimate because
$$
\left\|t\dot{H}_tf\right\|_{L_q(\R^n;Y)}=\left\|(t\dot{h}_t)*f\right\|_{L_q(\R^n;Y)}\le \left\|t\dot{h}_t\right\|_{L_1(\R^n)}\cdot \left\|f\right\|_{L_q(\R^n;Y)}\asymp \sqrt{n}\cdot \left\|f\right\|_{L_q(\R^n;Y)}.
$$

Verifying  the validity of the identity~\eqref{eq:time der heat kernel} amounts to the following simple computation. By direct differentiation we see that every $x\in \R^n$ and $t\in (0,\infty)$ satisfy
$$
\dot{h}_t(x)=-\frac{1}{2t}\left(n-\frac{|x|^2}{2t}\right)h_t(x).
$$
Hence, by passing to polar coordinates we have
\begin{equation}\label{eq:abs int}
\forall\, t\in (0,\infty),\qquad \left\|t\dot{h}_t\right\|_{L_1(\R^n)} =\frac{\pi^{\frac{n}{2}}}{\Gamma\left(\frac{n}{2}\right)}\int_0^\infty \left|n-\frac{r^2}{2t}\right|h_t(r)r^{n-1}\ud r.
\end{equation}
It therefore remains to evaluate the integral in the right hand side of~\eqref{eq:abs int} as follows.
\begin{multline*}
\int_0^\infty \left|n-\frac{|x|^2}{2t}\right|h_t(r)r^{n-1}\ud r=\int_0^{\sqrt{2tn}}\frac{n-\frac{r^2}{2t}}{(4\pi t)^{\frac{n}{2}}}e^{-\frac{r^2}{4t}}r^{n-1}\ud r+\int_{\sqrt{2tn}}^\infty\frac{\frac{r^2}{2t}-n}{(4\pi t)^{\frac{n}{2}}}e^{-\frac{r^2}{4t}}r^{n-1}\ud r\\
=\frac{1}{(4\pi t)^{\frac{n}{2}}}\int_0^{\sqrt{2tn}} \frac{\partial}{\partial r}\left(r^ne^{-\frac{r^2}{4t}}\right)\ud r-\frac{1}{(4\pi t)^{\frac{n}{2}}}\int_{\sqrt{2tn}}^\infty \frac{\partial}{\partial r}\left(r^ne^{-\frac{r^2}{4t}}\right)\ud r=2\left(\frac{n}{2\pi e}\right)^{\frac{n}{2}}.\tag*{\qedhere}
\end{multline*}
\end{proof}

\begin{proof}[Proof of Theorem \ref{thm:LPtemporal}] The semigroup identity $H_{t+s}=H_tH_s$  implies that $\dot{H}_{t+s}=\dot{H}_tH_s$ for every $s,t\in (0,\infty)$. Hence, recalling~\eqref{eq:vector telscope}, we have
\begin{equation*}
 \forall\, t\in (0,\infty),\qquad  \dot H_t f=\sum_{k=-1}^\infty \left(\dot H_{2^{k+1} t}-\dot H_{2^{k+2}t}\right)f
  =\sum_{k=-1}^\infty \dot H_{2^k t}\left(H_{2^k t}-H_{3\cdot 2^{k}t}\right)f.
\end{equation*}
Consequently, by arguing as in~\eqref{eq:use telescope}, we conclude that
\begin{multline*}
   \bigg(\int_0^\infty\left\|t\dot H_t f\right\|_{L_q(\R^n;Y)}^q\frac{\ud t}{t}\bigg)^{\frac{1}{q}}
   \leq\sum_{k=-1}^\infty \bigg(\int_0^\infty\left\|t\dot H_{2^k t}(H_{2^k t}-H_{3\cdot 2^{k}t}) f \right\|_{L_q(\R^n;Y)}^q\frac{\ud t}{t}\Big)^{\frac{1}{q}} \\
   =\sum_{k=-1}^\infty \frac{1}{2^{k}}\bigg(\int_0^\infty\left\|s \dot H_{s}(H_{s}-H_{3s}) f\right\|_{L_q(\R^n;Y)}^q\frac{\ud s}{s}\bigg)^{\frac{1}{q}}\lesssim \bigg(\int_0^\infty\left\|t \dot H_{t}(H_{t}-H_{3t}) f\right\|_{L_q(\R^n;Y)}^q\frac{\ud t}{t}\bigg)^{\frac{1}{q}}.
\end{multline*}
It remains to note that, by an application of Lemma~\ref{lem:time t convolution} (with $f$ replaced by $(H_{t}-H_{3t}) f$) followed by integration with respect to $t$ and an application of Lemma~\ref{lem:Rota}, we have
\begin{align*}
\bigg(\int_0^\infty\left\|t \dot H_{t}(H_{t}-H_{3t}) f\right\|_{L_q(\R^n;Y)}^q\frac{\ud t}{t}\bigg)^{\frac{1}{q}}&\lesssim \sqrt{n}
\bigg(\int_0^\infty\left\|(H_{t}-H_{3t}) f\right\|_{L_q(\R^n;Y)}^q\frac{\ud t}{t}\bigg)^{\frac{1}{q}}\\&\lesssim \sqrt{n}\cdot \mathfrak{m}_q(Y)\|f\|_{L_q(\R^n;Y)}. \qedhere
\end{align*}
\end{proof}

\section{Auxiliary  geometric estimates when $X$ is isotropic}\label{sec:convex geometry}

In this section we shall present several geometric estimates on $(X,\|\cdot\|_X)$ that include justifications of statements that were already presented in the Introduction, such as the bound~\eqref{eq:current best} on $I_q(X)M(X)$ when $X$ is isotropic. Recall that the quantities $I_q(X)$ and $M(X)$ were defined in~\eqref{eq:defIqMq}. Also, recall the notation $M_p(X)$ in~\eqref{eq:defMq} and that   we use the more common notation $b(X)=M_\infty(X)$. Some of the ensuing inequalities are elementary, while others are quite deep, since they are deduced below (in a straightforward manner) from a combination of  major results in convex geometry.

\begin{proof}[Proof of~\eqref{eq:current best}] Recall that in the setting of~\eqref{eq:current best} we are given an $n$-dimensional normed space $(X,\|\cdot\|_X)$ and a Hilbertian norm $|\cdot|$ on $X$ (thus identifying $X$ with $\R^n$) such that the isotropicity requirement~\eqref{eq:def isotropic} holds true. By applying~\eqref{eq:def isotropic} with the vector $y\in \R^n$  ranging over an orthonormal basis and summing the (squares of) the resulting identities, we see that $I_2(X)=\sqrt{n}L_X$.

Note that $I_\infty(X)=\max_{x\in B_X}|x|$ is the circumradius of $B_X$ and $b(X)=\max_{x\in S^{n-1}} \|x\|_X$ is the reciprocal of the inradius of $B_X$, and therefore by~\cite{MP89} (see also~\cite[Section~3.2.1]{BGVV14}) we have
\begin{equation}\label{eq:radii}
I_\infty(X)\lesssim \sqrt{n}I_2(X)\qquad \mathrm{and}\qquad  b(X)\lesssim \frac{\sqrt{n}}{I_2(X)}
\end{equation}
A  theorem of Giannopoulos and E.~Milman~\cite{GM14} asserts that
\begin{equation}\label{eq:GM quote}
M(X)\lesssim \frac{(n\log n)^{\frac25}}{I_2(X)}.
\end{equation}
A  substitution of~\eqref{eq:GM quote}  and the second inequality in~\eqref{eq:radii} into the result~\eqref{eq:quote LMS} of Litvak, Milman and Schechtman that we already used earlier in the proof of Lemma~\ref{lem:lip of heat} shows that
\begin{equation}\label{eq:Mq altogether}
\forall\, p\in [1,\infty),\qquad M_p(X)\lesssim \left((n\log n)^{\frac25}+\frac{\sqrt{pn}}{\sqrt{n+p}}\right)\frac{1}{I_2(X)}.
\end{equation}
By a theorem of Paouris~\cite{Pao06} we have
\begin{equation}\label{eq:paouris quote}
\forall\, q\in [2,\infty),\qquad I_q(X)\lesssim \left(1+\frac{q\sqrt{n}}{n+q}\right)I_2(X).
\end{equation}
Note that~\eqref{eq:paouris quote} is stated in~\cite[Theorem~1.2]{Pao06} only for the range $q\in [2,n]$, but when $q\ge n$ the estimate~\eqref{eq:paouris quote} becomes $I_q(X)\lesssim\sqrt{n}I_2(X)$, which follows from the first inequality in~\eqref{eq:radii} since $I_q(X)\le I_\infty(X)$. In conclusion, it follows from~\eqref{eq:Mq altogether} and~\eqref{eq:paouris quote} that for all $p\ge 1$ and $q\ge 2$ we have
\begin{equation}\label{eq:current best-q-p}
I_q(X) M_p(X)\lesssim \left(1+\frac{q\sqrt{n}}{n+q}\right)\left((n\log n)^{\frac25}+\frac{\sqrt{pn}}{\sqrt{n+p}}\right).
\end{equation}
The estimate~\eqref{eq:current best-q-p}  is the currently best known bound on $I_q(X)M_p(X)$ when $X$ is isotropic, the case $p=1$ of which becomes
\begin{equation}\label{eq:current best-q}
I_q(X) M(X)\lesssim \left\{\begin{array}{ll}(n\log n)^{\frac25} & \mathrm{if\ } n\ge q^2,\\
 \frac{q(\log q)^{\frac{2}{5}}}{\sqrt[10]{n}}& \mathrm{if\ }  q\le n\le q^2,\\
 n^{\frac{9}{10}}(\log n)^{\frac25}& \mathrm{if\ }  n\le q.\end{array}\right.
\end{equation}
The first range in the right hand side of~\eqref{eq:current best-q} is precisely the desired estimate~\eqref{eq:current best}.
\end{proof}

\begin{remark}\label{rem:type 2}
For $p\in [1,2]$, the Rademacher type $p$ constant of a normed space $(X,\|\cdot\|_X)$, denoted $T_p(X)$, is the smallest $T\in [1,\infty)$ such that for every $k\in \N$ and every $x_1,\ldots,x_k\in X$ we have
$$
\frac{1}{2^k}\sum_{\e\in \{-1,1\}^k} \bigg\|\sum_{j=1}^k \e_j x_k\bigg\|_X\le T\bigg(\sum_{j=1}^k\|x_j\|_X^p\bigg)^{\frac{1}{p}}.
$$
Observe that $T_2(\ell_2)\le 1$ by the parallelogram identity, and therefore $T_2(X)\le c_2(X)\le \sqrt{\dim (X)}$.

For an isotropic $n$-dimensional normed space  $(X,\|\cdot\|_X)$ with good control on $T_2(X)$, the estimate~\eqref{eq:current best} can be improved by incorporating the work of E.~Milman~\cite{Mil06} into the above argument. Indeed, it is shown in~\cite{Mil06} (see also~\cite[Theorem~9.3.3]{BGVV14}) that $M(X)\lesssim T_2(X)/I_2(X)$. This bound, in combination with the second inequality in~\eqref{eq:radii} and the estimates~\eqref{eq:quote LMS}  and~\eqref{eq:paouris quote} shows that
$$
\forall\, q\in [2,\infty),\qquad n\ge q^2\implies I_q(X)M(X)\lesssim T_2(X).
$$
More generally, for every $p,q\in [1,\infty)$ we have
$$
I_q(X)M_p(X)\lesssim \left(1+\frac{q\sqrt{n}}{n+q}\right)\left(T_2(X)+\frac{\sqrt{pn}}{\sqrt{n+p}}\right).
$$
In particular, $I_q(X)M(X)\lesssim \sqrt{q}\cdot T_2(X)$ for every $q\in [1,\infty)$.
\end{remark}

\begin{remark}\label{rem:unconditional}
Fix $C\ge 1$ and suppose that $(X,\|\cdot\|_X)$ is $C$-unconditional with respect to a Hilbertian norm $|\cdot|$ on $X$, i.e., after identification with $\R^n$ we have $\|(\e_1 x_1,\ldots,\e_n x_n)\|_X\le C\|x\|_X$ for all $x\in \R^n$ and $\e_1,\ldots,\e_n\in \{-1,1\}$. We stated in the Introduction that  $I_q(X)M(X)\lesssim_q C^2\sqrt{\log n}$ for every $q\in [2,\infty)$. Indeed, by a result of Milman and Pajor~\cite[Proposition~b]{MP89}  we have $I_2(X)\lesssim C\sqrt{n}$. Hence, using~\eqref{eq:paouris quote} we see that $I_q(X)\lesssim C(\sqrt{n}+\min\{n,q\})$. Also, by~\cite[Proposition~2.5]{BN03} we have $\|x\|_X\lesssim C\|x\|_{\ell_\infty^n}$ for every $x\in \R^n$, and therefore $M_p(X)\lesssim CM_p(\ell_\infty^n)$ for every $p\in [1,\infty)$.  A standard computation (see e.g.~\cite[Section~5.7]{MS86}) shows that $M(\ell_\infty^n)\asymp\sqrt{\log n}/\sqrt{n}$. Since $b(\ell_\infty^n)=1$, it therefore follows from~\eqref{eq:quote LMS} that $M_p(\ell_\infty^n)\asymp \sqrt{p+\log n}/\sqrt{p+n}$. The above bounds yield
\begin{align*}
\nonumber I_q(X)M_p(X)&\lesssim C^2\left(\sqrt{n}+\min\{n,q\}\right) \sqrt{\frac{p+\log n}{p+n}}.
\end{align*}
In particular, when $p=1$ we see that for every $q\in [2,\infty)$ we have
$$
I_q(X)M(X)\lesssim \left\{\begin{array}{ll}C^2\sqrt{\log n}&\mathrm{if}\  n\ge q^2,\\
\frac{C^2q\sqrt{\log q}}{\sqrt{n}}&\mathrm{if}\  q\le n\le q^2,\\
C^2\sqrt{n\log n}&\mathrm{if}\  n\le q.\end{array}\right.
$$
This implies the desired estimate, and in fact it gives that $I_q(X)M(X) \lesssim C^2\sqrt{q\log n}$.
\end{remark}

We next record some elementary volumetric estimates that yield simple lower bounds on the quantity $I_q(X)M(X)$ that were already used in the Introduction.

\begin{lemma}\label{lem:UV}
Fix $n\in \N$ and $q\in (0,\infty)$. Suppose that $\|\cdot\|_U$ and $\|\cdot\|_V$ are two norms on $\R^n$, with unit balls $B_U$ and $B_V$, respectively.  Then
\begin{equation}\label{eq:UV version}
\bigg(\fint_{B_U}\|u\|_V^q\ud u\bigg)^{\frac{1}{q}}\ge  \left(\frac{n}{n+q}\right)^{\frac{1}{q}}\left(\frac{|B_U|}{|B_V|}\right)^{\frac{1}{n}}.
\end{equation}
\end{lemma}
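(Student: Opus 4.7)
\smallskip
\noindent\textbf{Proof plan for Lemma~\ref{lem:UV}.} The inequality is a $V$-symmetric rearrangement statement: among all measurable sets $\Omega\subset \R^n$ of volume $|B_U|$, the integral $\int_\Omega \|u\|_V^q\,\mathrm{d}u$ is minimized when $\Omega$ is the $V$-ball $rB_V$ with $r:=(|B_U|/|B_V|)^{1/n}$, whose value gives precisely the right-hand side of~\eqref{eq:UV version}. I will implement this via the layer-cake formula together with the trivial set-theoretic estimate $|A\setminus B|\ge |A|-|B|$.

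First, writing the integral through its distribution function and changing variables $t=s^q$,
\begin{equation*}
\int_{B_U}\|u\|_V^q\,\mathrm{d}u
=\int_0^\infty \big|\{u\in B_U:\|u\|_V^q>t\}\big|\,\mathrm{d}t
=q\int_0^\infty s^{q-1}|B_U\setminus sB_V|\,\mathrm{d}s.
\end{equation*}
Since $|A\setminus B|\ge |A|-|B|$ for any measurable $A,B\subset\R^n$, we have $|B_U\setminus sB_V|\ge |B_U|-s^n|B_V|$, which is nonnegative exactly for $s\in[0,r]$ with $r=(|B_U|/|B_V|)^{1/n}$. Discarding the contribution from $s>r$ (where our lower bound is vacuous but the integrand is still nonnegative) yields
\begin{equation*}
\int_{B_U}\|u\|_V^q\,\mathrm{d}u
\ge q\int_0^r s^{q-1}\bigl(|B_U|-s^n|B_V|\bigr)\,\mathrm{d}s
=|B_U|r^q-\frac{q|B_V|}{n+q}r^{n+q}.
\end{equation*}

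Using the defining identity $r^n|B_V|=|B_U|$ to simplify $|B_V|r^{n+q}=|B_U|r^q$, the right-hand side collapses to $(n/(n+q))|B_U|r^q$. Dividing by $|B_U|$ and taking $q$-th roots gives
\begin{equation*}
\bigg(\fint_{B_U}\|u\|_V^q\,\mathrm{d}u\bigg)^{\frac1q}\ge\Big(\frac{n}{n+q}\Big)^{\frac1q}r=\Big(\frac{n}{n+q}\Big)^{\frac1q}\Big(\frac{|B_U|}{|B_V|}\Big)^{\frac1n},
\end{equation*}
which is exactly~\eqref{eq:UV version}.

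There is no genuine obstacle here; the only non-cosmetic choice is to cut the layer-cake integral at $s=r$, which is precisely the point at which the trivial bound $|B_U\setminus sB_V|\ge|B_U|-s^n|B_V|$ ceases to carry information. Equality in the final inequality is attained when $B_U=rB_V$ (i.e.\ when the norms $\|\cdot\|_U$ and $\|\cdot\|_V$ are proportional), confirming that the bound is sharp and cannot be improved without further hypotheses on the pair $(\|\cdot\|_U,\|\cdot\|_V)$.
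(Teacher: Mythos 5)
Your proof is correct, but it follows a genuinely different route from the paper's. The paper integrates in polar coordinates (with respect to an auxiliary Euclidean norm) to derive two identities, namely $\int_{B_U}(\|u\|_U/\|u\|_V)^n\,\mathrm{d}u=|B_V|$ and $\int_{B_U}(\|u\|_V/\|u\|_U)^q\,\mathrm{d}u=\frac{n+q}{n}\int_{B_U}\|u\|_V^q\,\mathrm{d}u$, and then applies H\"older's inequality with exponents $\frac{n+q}{n}$ and $\frac{n+q}{q}$ to the identity $|B_U|=\int_{B_U}(\|u\|_V/\|u\|_U)^{nq/(n+q)}(\|u\|_U/\|u\|_V)^{nq/(n+q)}\,\mathrm{d}u$. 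You instead use the layer-cake formula and the trivial set bound $|B_U\setminus sB_V|\ge |B_U|-s^n|B_V|$, truncated at the scale $r=(|B_U|/|B_V|)^{1/n}$ where that bound stops being informative. The two approaches buy slightly different things: yours is arguably more elementary (no H\"older, no polar coordinates) and is visibly more general, since nothing in your argument uses that $B_U$ is a norm ball --- it works verbatim for $B_U$ replaced by any measurable set of finite positive volume, and it immediately exhibits the equality case $B_U=rB_V$. The paper's approach requires the norm structure of $\|\cdot\|_U$ for the polar-coordinate step, but it produces along the way a pair of exact integral identities that are of some independent interest. Both proofs are short and both are complete; either is a valid proof of the lemma as stated.
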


\begin{proof} Fixing a Euclidean norm $|\cdot|$ on $\R^n$, by integrating in polar coordinates (twice) we see that
\begin{multline}\label{eq:volume polar}
\int_{B_U} \left(\frac{\|u\|_U}{\|u\|_V}\right)^n\ud u=\int_{\R^n} \left(\frac{\|u\|_U}{\|u\|_V}\right)^n\1_{\{\|u\|_U\le 1\}}\ud u=
|B^n|\fint_{S^{n-1}}\left(\frac{\|\sigma\|_U}{\|\sigma\|_V}\right)^n\int_0^{\frac{1}{\|\sigma\|_U}}nr^{n-1}\ud r\ud \sigma\\
=|B^n|\fint_{S^{n-1}}\frac{\ud \sigma}{\|\sigma\|_V^n}=|B^n|\fint_{S^{n-1}}\int_0^{\frac{1}{\|\sigma\|_V}}nr^{n-1}\ud r\ud \sigma=\int_{\R^n}\1_{\{\|v\|_V\le 1\}}\ud v=|B_V|.
\end{multline}
Similarly,
\begin{multline}\label{eq:Iq polar}
\int_{B_U} \left(\frac{\|u\|_V}{\|u\|_U}\right)^q\ud u=
|B^n|\fint_{S^{n-1}}\left(\frac{\|\sigma\|_V}{\|\sigma\|_U}\right)^q \int_0^{\frac{1}{\|\sigma\|_U}}nr^{n-1}\ud r\ud \sigma=|B^n|\fint_{S^{n-1}}\frac{\|\sigma\|_V^q}{\|\sigma\|_U^{q+n}}\ud \sigma\\
=|B^n|\fint_{S^{n-1}} \|\sigma\|_V^q\int_0^{\frac{1}{\|\sigma\|_U}}(n+q)r^{n+q-1}\ud r\ud \sigma= \frac{n+q}{n}\int_{B_U} \|u\|_V^q\ud u.
\end{multline}
Hence, by H\"older's inequality with exponents $(n+q)/n$ and $(n+q)/q$, we see that
\begin{multline}\label{eq:n+1 holder}
|B_U|=\int_{B_U} \left(\frac{\|u \|_V}{\|u\|_U}\right)^{\frac{nq}{n+q}}\left(\frac{\|u\|_U}{\|u \|_V}\right)^{\frac{nq}{n+q}}\ud u\le \bigg(\int_{B_U} \left(\frac{\|u\|_V}{\|u\|_U}\right)^q\ud u\bigg)^{\frac{n}{n+q}}
\bigg(\int_{B_U} \left(\frac{\|u\|_U}{\|u\|_V}\right)^n\ud u\bigg)^{\frac{q}{n+q}}\\\stackrel{\eqref{eq:volume polar}\wedge \eqref{eq:Iq polar}}{=}\left(\frac{n+q}{n}\int_{B_U} \|u\|_V^q\ud u\right)^{\frac{n}{n+q}}|B_V|^{\frac{q}{n+q}}.
\end{multline}
Now, the inequality~\eqref{eq:n+1 holder} simplifies to give the desired estimate~\eqref{eq:UV version}.
\end{proof}

\begin{corollary}\label{coro:lower product}
Fix $p,q\in (0,\infty)$ and $n\in \N$. Suppose that $(X,\|\cdot\|_X)$ is an $n$-dimensional normed space and that $|\cdot|$ is a Hilbertian norm on $X$, thus identifying $X$ with $\R^n$. Then
\begin{equation}\label{eq:IqMq lower}
I_q(X)M_p(X)\ge \left(\frac{n}{n+q}\right)^{\frac{1}{q}}.
\end{equation}
\end{corollary}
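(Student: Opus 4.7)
The corollary will follow by combining Lemma~\ref{lem:UV} with a complementary lower bound for $M_p(X)$ that involves the reciprocal volume ratio. The plan is to bound both $I_q(X)$ and $M_p(X)$ below in terms of $|B_X|/|B^n|$ in such a way that the two bounds multiply to $(n/(n+q))^{1/q}$, with the volume ratios cancelling.

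First, I would apply Lemma~\ref{lem:UV} with $\|\cdot\|_U\eqdef \|\cdot\|_X$ and $\|\cdot\|_V\eqdef |\cdot|$, so that $B_U=B_X$ and $B_V=B^n$, which immediately gives
$$
I_q(X)=\bigg(\fint_{B_X}|u|^q\ud u\bigg)^{\frac{1}{q}}\ge \left(\frac{n}{n+q}\right)^{\frac{1}{q}}\left(\frac{|B_X|}{|B^n|}\right)^{\frac{1}{n}}.
$$
Second, I would establish that $M_p(X)\ge (|B^n|/|B_X|)^{1/n}$ for every $p\in (0,\infty)$. Since $p\mapsto M_p(X)$ is nondecreasing on $(0,\infty)$ by Jensen's inequality, it suffices to verify this in the limit $p\to 0^+$, in which $M_p(X)\to \exp\bigl(\fint_{S^{n-1}}\log\|\sigma\|_X\ud\sigma\bigr)$. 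The polar-coordinate identity recorded inside the proof of Lemma~\ref{lem:UV}, namely $\fint_{S^{n-1}}\|\sigma\|_X^{-n}\ud\sigma=|B_X|/|B^n|$ (see~\eqref{eq:volume polar}), combined with Jensen's inequality applied to the concave function $\log$, yields
$$
\log\left(\frac{|B_X|}{|B^n|}\right)=\log\bigg(\fint_{S^{n-1}}\|\sigma\|_X^{-n}\ud\sigma\bigg)\ge -n\fint_{S^{n-1}}\log\|\sigma\|_X\ud\sigma,
$$
which rearranges to $\exp\bigl(\fint_{S^{n-1}}\log\|\sigma\|_X\ud\sigma\bigr)\ge (|B^n|/|B_X|)^{1/n}$, as desired.

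Multiplying the two displayed lower bounds gives the stated estimate, with the factors $(|B_X|/|B^n|)^{1/n}$ and $(|B^n|/|B_X|)^{1/n}$ cancelling exactly. The argument is essentially mechanical given Lemma~\ref{lem:UV}, so I do not anticipate any real obstacle; the only point requiring a small comment is the range $p\in (0,\infty)$, where the case of small $p$ is reduced to the logarithmic limit via monotonicity of $p\mapsto M_p(X)$.
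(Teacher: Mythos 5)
Your proof is correct, and it differs from the paper's in how the lower bound $M_p(X)\ge(|B^n|/|B_X|)^{1/n}$ is obtained. The paper simply applies Lemma~\ref{lem:UV} a second time, with $\|\cdot\|_U=|\cdot|$ and $\|\cdot\|_V=\|\cdot\|_X$: computing $\fint_{B^n}\|x\|_X^p\ud x$ both via~\eqref{eq:UV version} and via polar coordinates shows that both sides carry the same factor $(n/(n+p))^{1/p}$, which cancels to yield the $p$-independent bound directly for every $p\in(0,\infty)$. You instead invoke Lemma~\ref{lem:UV} only once (for the $I_q$ bound), and get the $M_p$ bound by monotonicity of $p\mapsto M_p(X)$ together with the $p\to 0^+$ geometric-mean limit $M_p(X)\to\exp\bigl(\fint_{S^{n-1}}\log\|\sigma\|_X\ud\sigma\bigr)$, plus Jensen applied to the identity $\fint_{S^{n-1}}\|\sigma\|_X^{-n}\ud\sigma=|B_X|/|B^n|$ from~\eqref{eq:volume polar}. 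This is a valid and somewhat more conceptual route: it makes explicit that the sharp constant in the $M_p$ bound is already attained at $p=0$. On the other hand it imports two standard but unproved-in-the-paper facts (the Lyapunov monotonicity of $p\mapsto M_p(X)$ and the logarithmic limit), whereas the paper's second application of Lemma~\ref{lem:UV} is self-contained and entirely parallel to the first, so the paper's version is arguably cleaner. Either way, the volume ratios cancel and the estimate follows.
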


\begin{proof}
By an application of Lemma~\ref{lem:UV} with $\|\cdot\|_U=|\cdot|$ and $\|\cdot\|_V=\|\cdot\|_X$, combined with integration in polar coordinates, we see that
\begin{multline*}
\left(\frac{n}{n+p}\right)^{\frac{1}{p}}\left(\frac{|B^n|}{|B_X|}\right)^{\frac{1}{n}}\le \bigg(\fint_{B^n}\|x\|_X^p\ud x\bigg)^{\frac{1}{p}}\\=\bigg(\fint_{S^{n-1}}\int_0^1 nr^{n+p-1}\|\sigma\|_X^p\ud r\ud \sigma\bigg)^{\frac{1}{p}}= \left(\frac{n}{n+p}\right)^{\frac{1}{p}}M_p(X).
\end{multline*}
Hence,
\begin{equation}\label{eq:Mq lower ratio}
M_p(X)\ge \left(\frac{|B^n|}{|B_X|}\right)^{\frac{1}{n}}.
\end{equation}
Also, another application of Lemma~\ref{lem:UV}, this time with $\|\cdot\|_U=\|\cdot\|_X$ and $\|\cdot\|_V=|\cdot|$ shows that
\begin{equation}\label{eq:Iq lower ratio}
I_q(X)\ge \left(\frac{n}{n+q}\right)^{\frac{1}{q}}\left(\frac{|B_X|}{|B^n|}\right)^{\frac{1}{n}}.
\end{equation}
The desired lower bound~\eqref{eq:IqMq lower} now follows by taking the product of~\eqref{eq:Mq lower ratio} and~\eqref{eq:Iq lower ratio}.
\end{proof}

\begin{remark}\label{rem:IqMq Lx} Fix $p,q\in [2,\infty)$. Since $\sqrt[n]{|B^n|}\asymp 1/\sqrt{n}$ and when $X$ is isotropic we have $|B_X|=1$, it follows from~\eqref{eq:Mq lower ratio} that $M_p(X)\gtrsim 1/\sqrt{n}$. Also, $I_q(X)\ge I_2(X)=L_X\sqrt{n}$, so $I_q(X)M_p(X)\gtrsim L_X$.
\end{remark}

\subsection{Wasserstein symmetries}\label{sec:wass} Here we shall provide justifications for statements that we made in Question~\ref{Q:proj lip}. These issues relate to geometric questions  that originated from investigations into quantitative differentiation, but are of interest in their own right. As such, the contents of this section are not needed for the purpose of proving the new results that we stated in the Introduction.

Recall that in Question~\ref{Q:proj lip} we are assuming   that $X$ is isotropic. For every Banach space $(Y,\|\cdot\|_Y)$ and $f\in L_1(B_X;Y)$, the definition of $\Proj f$ in~\eqref{eq:def proj} says that for every $x\in X$ we have
\begin{equation}\label{eq:lambda formula}
\Proj f (x)=\int_{B_X} f(z)\ud z+\frac{1}{L_X^2}\sum_{j=1}^n x_j\int_{B_X} z_jf(z)\ud z=\int_{B_X} f(z)\ud z+\frac{1}{L_X^2}\int_{B_X} (x\cdot z)f(z)\ud z.
\end{equation}

Recall that if $\mu,\nu$ are nonnegative Borel measures on $B_X$ with $\mu(B_X)=\nu(B_X)<\infty$ then a coupling of $\mu$ and $\nu$ is a Borel measure $\pi$ on $B_X\times B_X$ such that $\pi(A\times B_X)=\mu(A)$ and $\pi(B_X\times A)=\nu(A)$ for all Borel $A\subset B_X$. The set of all coupling of $\mu$ and $\nu$ is denoted $\Pi(\mu, \nu)$. Note that $\Pi(\mu,\nu)\neq \emptyset$ because $\mu$ and $\nu$ have the same total mass (specifically, $(\mu\times \nu)/\mu(B_X)\in \Pi(\mu,\nu)$). The Wasserstein-$1$ distance between $\mu$ and $\nu$ associated to the metric that is induced by $\|\cdot\|_X$ is
$$
\mathsf{W}_1^{\|\cdot\|_X}(\mu,\nu)\eqdef \inf_{\pi\in \Pi(\mu,\nu)} \iint_{B_X\times B_X} \|x-y\|_X\ud \pi(x,y).
$$

If $\tau$ is a Borel measure on $X$ with $|\tau|(B_X)<\infty$ and $\tau(B_X)=0$ then write
\begin{equation}\label{eq:def wass norm}
\|\tau\|_{\mathsf{W}_1(B_X,\|\cdot\|_X)}\eqdef \mathsf{W}_1^{\|\cdot\|_X}(\tau^+,\tau^-),
\end{equation}
where $\tau=\tau^+-\tau^-$ and $\tau^+,\tau^-$ are nonnegative Borel measures on $B_X$, which have the same total mass since we are assuming that $\tau(B_X)=0$, so that the definition~\eqref{eq:def wass norm} makes sense. This definition turns the space of all Borel measures $\tau$ on $B_X$ with $|\tau|(B_X)<\infty$ and $\tau(B_X)=0$ into a Banach space, which we denote below by $\mathsf{W}_1(B_X,\|\cdot\|_X)$.

Let $\Lip_0(B_X,\R)$ denote the space of all functions $f:B_X\to \R$ with $f(0)=0$, equipped with the norm $\|\cdot\|_{\Lip(B_X,\R)}$. By the Kantorovich--Rubinstein duality theorem (see~\cite[Theorem~1.14]{Vil03}), we have $\Lip_0(B_X,\R)^*=\mathsf{W}_1(B_X,\|\cdot\|_X)$, with the identification being that a measure $\mu\in  \mathsf{W}_1(B_X,\|\cdot\|_X)$ acts on a function $f\in\Lip_0(B_X,\R)$ through integration, i.e., $\mu(f)=\int_{B_X}f(y)\ud \mu(y)$.

Since $\Proj f=f$ for every constant function $f$, we have
$$
\|\Proj \|_{\Lip(B_X,\R)\to \Lip(B_X,\R)}=\|\Proj \|_{\Lip_0(B_X,\R)\to \Lip_0(B_X,\R)}.
$$
Moreover, if we define an operator $T:\Lip_0(B_X,\R)\to X^*$ by setting
$$
\forall\, x\in X,\qquad Tf(x)\eqdef \frac{1}{L_X^2}\sum_{j=1}^n x_j\int_{B_X} z_jf(z)\ud z,
$$
then by~\eqref{eq:lambda formula} for every $f\in \Lip_0(B_X,\R)$ the linear part of the affine mapping $\Proj f$ is precisely $\frac{1}{L_X^2}Tf$. Hence $L_X^2\|\Proj f\|_{\Lip(B_X,\R)}=\|Tf\|_{X^*}$, and therefore
$$
L_X^2\|\Proj \|_{\Lip_0(B_X,\R)\to \Lip_0(B_X,\R)}=\|T\|_{\Lip_0(B_X,\R)\to X^*}=\|T^*\|_{ X\to \mathsf{W}_1(B_X,\|\cdot\|_X)}.
$$
One computes directly that the adjoint operator $T^*:X\to \Lip_0(B_X,\R)^*=\mathsf{W}_1(B_X,\|\cdot\|_X)$ is such that $T^*x$ is the measure whose density is $y\mapsto (x\cdot y)\1_{B_X}(y)$. Altogether, these observations give
\begin{equation}\label{eq:duality identity w1 proj}
\sup_{x\in \partial B_X} \mathsf{W}_1^{\|\cdot\|_X}\!\left(y\mapsto (x\cdot y)^+\1_{B_X}(y),y\mapsto (x\cdot y)^-\1_{B_X}(y)\right)=L_X^2\|\Proj \|_{\Lip(B_X,\R)\to \Lip(B_X,\R)}.
\end{equation}

\begin{proof}[Proof of~\eqref{eq:reduction to 1 dim}] Since trivially $\|\Proj\|_{\Lip(B_X,\R)\to   \Lip(B_X,\R)}\le \|\Proj\|_{\Lip(B_X,Y)\to   \Lip(B_X,Y)}$ for every Banach space $Y$ with $\dim(Y)\neq 0$, the goal here is to establish the reverse inequality.  Fix $\e\in (0,1)$ and $x\in X$. Let $\nu_x^+,\nu_x^-$ be the measures supported on $B_X$ whose densities are $w\mapsto (x\cdot w)^+$ and $w\mapsto (x\cdot w)^-$, respectively. By~\eqref{eq:duality identity w1 proj} there exists a coupling $\pi_x^\e\in \Pi(\nu_x^+,\nu_x^-)$ with
\begin{equation}\label{eq:plus eps coupling}
\iint_{B_X\times B_X}\|w-z\|_X\ud\pi_x^\e(w,z)\le L_X^2\|\Proj \|_{\Lip(B_X,\R)\to \Lip(B_X,\R)}(\|x\|_X+\e).
\end{equation}
Suppose that $f\in \Lip(B_X,Y)$. Then for every $x,y\in B_X$ we have
\begin{multline*}
\|\Proj f(x)-\Proj f(y)\|_Y=\frac{1}{L_X^2}\bigg\|\int_{B_X} f(w)\ud \nu_{x-y}^+(w)-\int_{B_X} f(w)\ud \nu_{x-y}^-(w)\bigg\|_Y\\
=\frac{1}{L_X^2}\bigg\|\iint_{B_X\times B_X} \big(f(w)-f(z)\big)\ud\pi_{x-y}^\e(w,z)\bigg\|_Y\le \frac{\|f\|_{\Lip(B_X,Y)}}{L_X^2}\iint_{B_X\times B_X}\|w-z\|_X\ud\pi_{x-y}^\e(w,z).
\end{multline*}
Hence, $\|\Proj f\|_{\Lip(B_X,Y)}\le \|\Proj \|_{\Lip(B_X,\R)\to \Lip(B_X,\R)}\|f\|_{\Lip(B_X,Y)}$, by the above estimate combined with~\eqref{eq:plus eps coupling} (and letting $\e\to 0$). So, $\|\Proj\|_{\Lip(B_X,Y)\to   \Lip(B_X,Y)}\le \|\Proj\|_{\Lip(B_X,\R)\to   \Lip(B_X,\R)}$.
\end{proof}

\begin{proof}[Proof of~\eqref{eq:wasserstein symmetry}]  By Borell's lemma~\cite{Bor74} (see \cite[Theorem~2.4.6]{BGVV14}) for every $x\in X$ we have
$$
\int_{B_X}|x\cdot z|\ud z\asymp \bigg(\int_{B_X} (x\cdot y)^2\ud y\bigg)^{\frac12}\stackrel{\eqref{eq:def isotropic}}{=}L_X|x|.
$$
Hence, recalling the definition of $\mu_x^+,\mu_x^-$ in~\eqref{eq:def muxplus}, for every $x\in X\setminus \{0\}$ we have
$$
\mathsf{W}_1^{\|\cdot\|_X}(\mu_x^+,\mu_x^-)\asymp \frac{\mathsf{W}_1^{\|\cdot\|_X}\!\left(y\mapsto (x\cdot y)^+\1_{B_X}(y),y\mapsto (x\cdot y)^-\1_{B_X}(y)\right)}{L_X|x|}.
$$
Therefore~\eqref{eq:wasserstein symmetry} follows from~\eqref{eq:duality identity w1 proj}.
\end{proof}

\section{From $L_q$ affine approximation to Rademacher cotype}\label{sec:deduce cotype}

The notion of Rademacher type $p$ of a Banach space was already recalled in Remark~\ref{rem:type 2}. Specifically, a Banach space $(Y,\|\cdot\|_Y)$ is said to have Rademacher type $p\in [1,2]$ if $T_p(Y)<\infty$, with $T_p(Y)$ as in Remark~\ref{rem:type 2}. A Banach space $(Y,\|\cdot\|_Y)$ is said to have Rademacher cotype $q\in [2,\infty]$ if there exists $C\in (0,\infty)$ such that  for every $k\in \N$ and every $x_1,\ldots,x_k\in Y$ we have
$$
\bigg(\sum_{j=1}^k\|x_j\|_Y^q\bigg)^{\frac{1}{q}}\le \frac{C}{2^k}\sum_{\e\in \{-1,1\}^k} \bigg\|\sum_{j=1}^k \e_j x_k\bigg\|_Y.
$$
The supremum over those $p\in [1,2]$ for which $(Y,\|\cdot\|_Y)$ has Rademacher type $p$ is denoted $p_Y$. The infimum over those $q\in [2,\infty]$ for which $(Y,\|\cdot\|_Y)$ has Rademacher cotype $q$ is denoted $q_Y$.

Recalling the notation in Remark~\ref{rem:Lq approx}, the main content of Proposition~\ref{eq:prop cotype} below is that if $(X,\|\cdot\|_X)$ and $(Y,\|\cdot\|_Y)$ are Banach space with $\dim(X)<\infty$ and  such that for some $K,q,Q\in [2,\infty)$ we have $r_Q^{X\to Y}(\e)\ge \exp(-K/\e^q)$ for every $\e\in (0,1/2]$, then necessarily $q_Y\le q$. The initial idea here is to use an example that was constructed in~\cite{HLN}, which yields a sharp upper bound on the modulus of $L_Q$ affine approximation at $\e$ for a certain function that takes values in $\ell_{q_Y}^m$ for some $m=m(\e)$. The Maurey--Pisier theorem~\cite{MP76} asserts that $Y$ contains a copy of $\ell_{q_Y}^m$, so we can certainly embed this  example of~\cite{HLN} into $Y$. However, a substantial complication occurs here because the affine approximant is now allowed to take values in $Y$ that may fall  outside the given copy of $\ell_{q_Y}^m$, thus precluding our ability to apply the impossibility  result of~\cite{HLN} as a ``black box." This would not be a problem if there existed a projection from $Y$ onto a copy of $\ell_{q_Y}^m$ with norm $O(1)$. However, obtaining such complemented copies of $\ell_{q_Y}^m$ is not possible in general, as exhibited in a remarkable example of Pisier~\cite{Pis83}. Maurey and Pisier considered this complementation issue in~\cite[Remarques~2.9]{MP76}, obtaining a partial result along these lines when $Y$ has Rademacher cotype $q_Y$, $Y^*$ has Rademacher type $p_Y^*$ (i.e., $q_Y$ and $p_{Y^*}$ are {\em attained}), and $1/p_{Y^*}+1/q_Y=1$. The latter condition is satisfied in our setting since $Y$ must be superreflexive by~\cite{BJLPS}, but there is no reason for the critical Rademacher type and cotype to be attained. We overcome this by adapting the proof of~\cite[Remarques~2.9]{MP76} so as to obtain a copy of $\ell_{q_Y}^m$ in $Y$ on which there exists a projection whose norm is bounded by a certain function of $m=m(\e)$ that grows to $\infty$ sufficiently slowly so as to yield the desired result (using the bound on $m(\e)$ as a function of $\e$ that is obtained in the {\em proof of}~\cite[Lemma~16]{HLN}).

\begin{proposition}\label{eq:prop cotype}
Let $(Y,\|\cdot\|_Y)$ be a Banach space such that there exist $K,q,Q\in [1,\infty)$, $n\in \N$ and an $n$-dimensional Banach space $(X,\|\cdot\|_X)$ such that $r_Q^{X\to Y}(\e)\ge \exp(-K/\e^q)$ for every $\e\in (0,1/2]$.
Then $Y$ is superreflexive and $q_Y\le q$. Hence, if in addition $Y$ is a Banach lattice then for every $s\in (q,\infty]$ it admits an equivalent norm whose modulus of uniform convexity is of power type $s$.
\end{proposition}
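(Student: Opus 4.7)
For the superreflexivity claim I would apply the same bootstrap used in Section~\ref{sec:local} to pass from Theorem~\ref{thm:local doro} to Corollary~\ref{cor:refined main}. Concretely, if $f:B_X\to Y$ is $1$-Lipschitz and $\Lambda\in\mathscr{A}(X,Y)$ is $3$-Lipschitz, then $\|f-\Lambda\|_{\Lip(B_X,Y)}\le 4$, and a simple pigeonhole argument turns $\big(\fint_{x+\rho B_X}\|f-\Lambda\|_Y^Q\,\ud y\big)^{1/Q}\le \d\rho$ into $\|f-\Lambda\|_{L_\infty(x+\rho B_X;Y)}\le \e\rho$ whenever $\d\lesssim \e^{1+n/Q}$. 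Feeding this into the hypothesis gives $r^{X\to Y}(\e)\ge \exp(-K'/\e^{q(1+n/Q)})>0$ for every $\e\in(0,1/2]$, so $Y$ is superreflexive by Theorem~\ref{thm:BJLPS quote}.

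For the cotype bound $q_Y\le q$ I would argue by contradiction, so suppose $q_Y>q$. Picking $s\in(q,q_Y]$ (with $s<q_Y$ if $q_Y$ is not attained, $s=q_Y$ otherwise), the Maurey--Pisier theorem~\cite{MP76} supplies, for every $m\in\N$ and $\eta\in(0,1)$, an embedding $j_m:\ell_s^m\hookrightarrow Y$ that is a $(1+\eta)$-isomorphism onto its image $E_m\subset Y$. I would then extract from the proof of~\cite[Lemma~16]{HLN}---whose construction is most naturally stated on $\ell_\infty^n$ but which transfers to our fixed $X$ with a loss depending only on $n=\dim(X)$ via John's theorem---a $1$-Lipschitz function $g_m:B_X\to \ell_s^m$ and a scale $\e_m$ depending polynomially on $1/m$ such that no affine map $\Lambda_0:X\to \ell_s^m$ with $\|\Lambda_0\|_{\Lip(X,\ell_s^m)}\le 5$ can approximate $g_m$ with $L_Q$-error $\le \e_m\rho$ on any sub-ball of radius $\rho\ge \exp(-(c/\e_m)^s)$; although~\cite[Lemma~16]{HLN} is phrased in terms of $L_\infty$-approximation, the oscillation-based lower bound underlying its proof yields this $L_Q$-statement directly. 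The plan is then to push $g_m$ to $f_m\eqdef j_m\circ g_m:B_X\to Y$ and convert each $Y$-valued affine approximant of $f_m$ back to an $\ell_s^m$-valued affine approximant of $g_m$ by composing with a projection $P_m:Y\to E_m$.

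The principal obstruction---and the hardest part of the plan---is that no uniformly bounded family of projections $P_m$ exists in general, by Pisier's example~\cite{Pis83}. I would overcome this by revisiting~\cite[Remarques~2.9]{MP76}, where $E_m$ is realized as the span of iterated Rademacher averages of a vector in $Y$ and $P_m$ is the corresponding biorthogonal projection, whose norm reduces to the norm of an auxiliary averaging operator on a finite-dimensional subspace of $Y^*$. Maurey and Pisier bound this norm by $O(1)$ under the attainability assumption $\tfrac{1}{p_{Y^*}}+\tfrac{1}{q_Y}=1$, which we cannot assume here; the technical heart of the proof is a quantitative variant of that argument, using only the \emph{approximate} attainability of cotype $q_Y$ within $m$-dimensional subspaces, that produces $\|P_m\|\le \psi(m)$ with $\psi(m)=m^{o(1)}$ as $m\to\infty$. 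Once such a projection is in hand, for any $Y$-valued affine $L_Q$-approximant $\Lambda$ of $f_m$ the composition $P_m\circ\Lambda$ is an $\ell_s^m$-valued affine approximant of $g_m$ with $L_Q$-error and Lipschitz constant each inflated by at most $\psi(m)\cdot(1+\eta)$. The impossibility statement from the previous paragraph then forces $r_Q^{X\to Y}(\e_m/\psi(m))\le \exp(-(c/\e_m)^s)$, and because $s>q$ and $\psi$ grows subpolynomially, choosing $m=m(\e)$ as a sufficiently large polynomial in $1/\e$ produces a violation of $r_Q^{X\to Y}(\e)\ge\exp(-K/\e^q)$ at $\e=\e_m/\psi(m)$, giving the desired contradiction.

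For the Banach lattice conclusion, once $q_Y\le q<\infty$ has been established one invokes classical lattice theory: by~\cite[Theorem~1.f.7]{LT79}, a Banach lattice of finite Rademacher cotype $q_Y$ is $r$-concave for every $r>q_Y$, and by a theorem of Figiel (see~\cite[Proposition~1.f.3]{LT79}), every $r$-concave Banach lattice of nontrivial type admits an equivalent norm whose modulus of uniform convexity is of power type $r$. Since $Y$ is superreflexive it has nontrivial type, so this renorming is available for every $r\in(q,\infty]$, which is exactly the final assertion of the proposition.
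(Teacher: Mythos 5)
Your proposal follows essentially the same route as the paper's proof, and you correctly identify the main obstruction (no uniformly bounded projections onto the Maurey--Pisier subspaces, by Pisier's example) and the correct starting point (\cite[Remarques~2.9]{MP76}) for overcoming it. However, the proposal has a genuine gap: the projection-norm bound $\|P_m\|\le\psi(m)=m^{o(1)}$, which you describe as ``the technical heart,'' is asserted but not proved, and proving it is the bulk of the work. The paper does it concretely: it applies \cite[Lemme~1.5]{MP76} (via Brunel--Sucheston) to extract a $3$-unconditional sequence of differences $y^*_{k_{2j}}-y^*_{k_{2j-1}}$ in $Y^*$; uses the fact (from K-convexity, \cite{Pis82}, plus superreflexivity and \cite{FP74,Pisier:1975}) that $Y^*$ has Rademacher type $p$ for every $p<q_Y/(q_Y-1)$; and computes the norm of the explicit biorthogonal projection $Py=\sum_j\big(y^*_{k_{2j}}(y)-y^*_{k_{2j-1}}(y)\big)y_{k_{2j}}$ by dualizing against the normalizing functional, obtaining $\|P\|\lesssim T_p(Y^*)\,m^{\frac1p+\frac1{q_Y}-1}$. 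The exponent is positive for each admissible $p$, so one does not get a single $\psi(m)=m^{o(1)}$; instead, the final inequality is derived for each fixed $p$, one lets $m\to\infty$ first and only then sends $p\to q_Y/(q_Y-1)$. Your sketch of the contradiction (matching the doubly exponential lower bound for $r_Q^{X\to Y}$ against the exponential-in-$m$ impossibility scale from \cite[Lemma~16]{HLN}) is the right shape, but without the quantitative projection estimate it is not a proof.

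One smaller issue: your choice ``pick $s\in(q,q_Y]$ with $s<q_Y$ if $q_Y$ is not attained'' is not available. The Maurey--Pisier theorem gives $\ell_{q_Y}$ finitely representable in $Y$ regardless of whether the cotype $q_Y$ is attained, so the correct (and necessary) choice is always $s=q_Y$; for $s<q_Y$ there is in general no almost-isometric copy of $\ell_s^m$ inside $Y$ (e.g.\ $Y=\ell_2$ contains no $\ell_s^m$ for $s<2$). The superreflexivity reduction and the Banach-lattice endgame in your proposal are fine and match the paper up to choice of references.
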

\begin{proof} The conclusion that $Y$ is superreflexive is a consequence of the work of Bates, Johnson, Lindenstrauss, Preiss and Schechtman~\cite{BJLPS}, namely Theorem~\ref{thm:BJLPS quote} as stated in the Introduction. Indeed, since in the present setting the dependence of the constant $K$ on the dimension $n$ is irrelevant, one may assume that $X=\ell_2^n$, in which case by combining~\cite[Lemma~10]{HLN} and~\cite[Lemma~13]{HLN} we see that the affine mapping $\Lambda$ can also be taken to satisfy $\|\Lambda\|_{\Lip(X,Y)}\le 1$. By~\cite[Lemma~4]{HLN} it follows that the assumption of Proposition~\ref{eq:prop cotype} implies that $r^{X\to Y}(\e)>0$ for every $\e\in (0,1/2]$, and therefore $Y$ is superreflexive by Theorem~~\ref{thm:BJLPS quote}. The stronger conclusion when $Y$ is also a Banach lattice, i.e., that in this case for every $s\in (q,\infty]$ it admits an equivalent norm whose modulus of uniform convexity is of power type $s$, is a formal consequence of the (yet to be proven) conclusion $q_Y\le q$, by well-known structural results for Banach latices~\cite{FJ74,Fie76}  (see also~\cite[Section~1.f]{LT79}).

Due to these comments, the proof of Proposition~\ref{eq:prop cotype} will be complete if we show  that $q_Y\le q$. To that end, by the Maurey--Pisier theorem~\cite{MP76} for every $M\in \N$ there exist $y_1,\ldots,y_M\in Y$ such that
\begin{equation}\label{eq:use MP}
\forall\, a=(a_1,\ldots,a_M)\in \R^M,\qquad \bigg(\sum_{j=1}^M |a_j|^{q_Y}\bigg)^{\frac{1}{q_Y}}\le \bigg\|\sum_{j=1}^M a_j y_j\bigg\|_Y\le 2\bigg(\sum_{j=1}^M |a_j|^{q_Y}\bigg)^{\frac{1}{q_Y}}.
\end{equation}
In particular, $\|y_j\|_Y\ge 1$ for all $j\in \{1,\ldots,M\}$, so by Hahn--Bananch there exist $y_1^*,\ldots,y_M^*\in B_{Y^*}$ such that $y_k^*(y_j)=\d_{kj}$ for every $k,j\in \{1,\ldots,M\}$. Consequently, for every $a\in \R^M$ we have
\begin{equation}\label{eq:py lower}
\bigg\|\sum_{k=1}^M a_ky_k^*\bigg\|_{Y^*}\ge \frac{\left(\sum_{k=1}^M a_k y_k^*\right)\left(\sum_{j=1}^M \sign(a_j)|a_j|^{\frac{1}{q_Y-1}}y_j\right)}{\left\|\sum_{j=1}^M \sign(a_j)|a_j|^{\frac{1}{q_Y-1}}y_j\right\|_Y}\stackrel{\eqref{eq:use MP}}{\ge} \frac12\bigg(\sum_{j=1}^M |a_j|^{\frac{q_Y}{q_Y-1}}\bigg)^{\frac{q_Y-1}{q_Y}}.
\end{equation}

Fix $m\in \N$. It follows from~\eqref{eq:py lower} that $\|y_j^*-y_k^*\|_{Y^*}\ge 2^{-1/q_Y}$, so by~\cite[Lemme~1.5]{MP76} (which itself uses and important construction of Brunel and Sucheston~\cite{BS75}), provided $M$ is large enough (as a function of $m$), there exist $k_1,\ldots,k_{2m}\in \{1,\ldots,M\}$ with $k_1<k_2<\ldots< k_{2m}$ such that the vectors $\{y^*_{k_{2j}}-y^*_{k_{2j-1}}\}_{j=1}^m$ are a $3$-unconditional basic sequence in $Y^*$, i.e.,
\begin{equation}\label{eq:3 unconditional}
\forall (b,\e)\in \R^m\times \{-1,1\}^m,\qquad  \bigg\|\sum_{j=1}^m b_j(y^*_{k_{2j}}-y^*_{k_{2j-1}})\bigg\|_{Y^*}\le 3\bigg\|\sum_{j=1}^m \e_jb_j(y^*_{k_{2j}}-y^*_{k_{2j-1}})\bigg\|_{Y^*}.
\end{equation}

Since we have already shown that $Y$ is superreflexive, by the results of~\cite{FP74} and~\cite{Pisier:1975} we have $p_Y>1$, and therefore by Pisier's $K$-convexity theorem~\cite{Pis82} we have $p_{Y^*}=q_Y/(q_Y-1)$. Hence $T_p(Y^*)<\infty$ for every $p\in [1,q_Y/(q_Y-1))$, which implies that for every $b_1,\ldots,b_m\in \R$ we have
\begin{multline}\label{eq:type dual of qy}
\bigg\|\sum_{j=1}^m b_j(y^*_{k_{2j}}-y^*_{k_{2j-1}})\bigg\|_{Y^*}\stackrel{\eqref{eq:3 unconditional}}{\le} \frac{3}{2^m} \sum_{\e\in \{-1,1\}^m} \bigg\|\sum_{j=1}^m \e_jb_j(y^*_{k_{2j}}-y^*_{k_{2j-1}})\bigg\|_{Y^*}\\\le 3T_p(Y^*)\bigg(\sum_{j=1}^m |b_j|^p\left\|y^*_{k_{2j}}-y^*_{k_{2j-1}}\right\|_{Y^*}^p\bigg)^{\frac{1}{p}}\le 6T_p(Y^*)m^{\frac{1}{p}+\frac{1}{q_Y}-1}\bigg(\sum_{j=1}^m |b_j|^{\frac{q_Y}{q_Y-1}}\bigg)^{\frac{q_Y-1}{q_Y}}.
\end{multline}

Consider the subspace $W= \spn\{y_{k_2},y_{k_4},\ldots,y_{k_{2m}}\}\subset Y$. Let $S:\R^m\to W$ be defined by setting $S(a_1,\ldots,a_m)=\sum_{j=1}^m a_jy_{k_{2j}}$. Then due to~\eqref{eq:use MP} we know that $\|a\|_{\ell_{q_Y}^m}\le \|Sa\|_Y\le 2\|a\|_{\ell_{q_Y}^m}$ for every $a\in \R^m$. Next, consider the linear operator $P:Y\to W$  that is defined by
\begin{equation}\label{eq:def projection}
\forall\, y\in Y,\qquad Py\eqdef \sum_{j=1}^m \left(y_{k_{2j}}^*(y)-y_{k_{2j-1}}^*(y)\right)y_{k_{2j}}.
\end{equation}
Then $P$ is a projection onto $W$. We claim that $P$ satisfies the following operator norm bound.
\begin{equation}\label{eq:norm P}
\|P\|_{Y\to W}\le 12T_p(Y^*)m^{\frac{1}{p}+\frac{1}{q_Y}-1}\|y\|_Y.
\end{equation}
Indeed, fixing $y\in Y$, if we define
\begin{equation}\label{eq:def by choice}
\forall\, j\in \{1,\ldots,m\},\qquad b_j(y)\eqdef \frac{\big|y_{k_{2j}}^*(y)-y_{k_{2j-1}}^*(y)\big|^{q_Y-1}\sign\big(y_{k_{2j}}^*(y)-y_{k_{2j-1}}^*(y)\big) }{\Big(\sum_{j=1}^n\big|y_{k_{2j}}^*(y)-y_{k_{2j-1}}^*(y)\big|^{q_Y}\Big)^{1-\frac{1}{q_Y}}}\in \R,
\end{equation}
i.e., $(b_j(y))_{j=1}^m\in B_{\ell_{q_Y/(q_Y-1)}}^m$ is the normalizing functional of $(y_{k_{2j}}^*(y)-y_{k_{2j-1}}^*(y))_{j=1}^m\in \ell_{q_Y}^m$, then
\begin{multline*}
\|Py\|_Y\stackrel{\eqref{eq:use MP}}{\le} 2\bigg(\sum_{j=1}^n\left|y_{k_{2j}}^*(y)-y_{k_{2j-1}}^*(y)\right|^{q_Y}\bigg)^{\frac{1}{q_Y}}
\stackrel{\eqref{eq:def by choice}}{=}2\bigg(\sum_{j=1}^m b_j(y)(y^*_{k_{2j}}-y^*_{k_{2j-1}})\bigg)(y)\\\le 2\bigg\|\sum_{j=1}^m b_j(y)(y^*_{k_{2j}}-y^*_{k_{2j-1}})\bigg\|_{Y^*}\|y\|_Y\stackrel{\eqref{eq:type dual of qy}\wedge \eqref{eq:def by choice}}{\le} 12T_p(Y^*)m^{\frac{1}{p}+\frac{1}{q_Y}-1}\|y\|_Y,
\end{multline*}
thus establishing the validity of~\eqref{eq:norm P}.

By~\cite[Lemma~16]{HLN} there is a universal constant $\eta\in (0,1)$ with the following property. For every $m\in \N$ there exists a function $\phi^m:\R\to \ell_{q_Y}^m$ with $\|\f^m\|_{\Lip(\R,\ell_{q_Y}^m)}\le 1$ such that for every $Q\in [1,\infty]$ and every affine mapping $\Lambda:\R\to \ell_{q_Y}^m$, if $a,b\in [-1,1]$ satisfy $a\le b$ and $b-a\ge 4/2^m$ then
\begin{equation}\label{eq:quote R lemma}
\bigg(\frac{1}{b-a}\int_a^b\|\f^m(x)-\Lambda(x)\|_{\ell_{q_Y}^m}^Q\ud x\bigg)^{\frac{1}{Q}}\ge  \frac{\eta}{m^{\frac{1}{q_Y}}}\cdot\frac{b-a}{2}.
\end{equation}
We note that the above assertion does not appear in the statement of Lemma~16 of~\cite{HLN} but it is stated explicitly in its (short) proof.  In what follows it will be convenient to denote the coordinates of $\f^m$ by $\f^m_1,\ldots,\f^m_m:\R\to \R$, thus  $\f^m(x)=(\f^m_1(x),\ldots,\f^m_m(x))\in \R^m$ for every $x\in \R$.

By John's theorem~\cite{Joh48} we can identify $X$ (as a real vector space) with $\R^n$ so that for every $x\in X$ we have $\|x\|_{\ell_\infty^n}\le \|x\|_X\le n\|x\|_{\ell_\infty^n}$. (By~\cite{Gia95} the factor of $n$ here can be improved to $O(n^{5/6})$, but this is not important in the present context.) Define $f^m:\R^n\to Y$ by
\begin{equation}\label{eq:def fm}
\forall\, x\in \R^n,\qquad f^m(x)\eqdef \frac12 S\circ \f^m(x_1)= \frac{1}{2}\sum_{j=1}^m \f^m_j(x_1)y_{k_{2j}}.
\end{equation}
Thus $f^m(x)$ depends only on the first coordinate of $x$. Since $\|\f^m\|_{\Lip(\R,\ell_{q_Y}^m)}\le 1$ and $\|\cdot\|_{\ell_\infty^n}\le\|\cdot\|_X$, by~\eqref{eq:use MP} we have $\|f^m\|_{\Lip(X,Y)}\le 1$.  So, by our underlying assumption that $r_Q^{X\to Y}(\e)\ge \exp(-K/\e^q)$ for every $\e\in (0,1/2]$, there exists a radius $\rho\in (0,1)$ with
\begin{equation}\label{eq:rho lower counter example}
\rho\ge \exp\left(-\frac{K}{\eta^q}5^{2q}T_p(Y^*)^qn^{\frac{(n+Q)q}{Q}}m^{\frac{q}{p}+\frac{2q}{q_Y}-q}\right),
\end{equation}
a point $x\in B_X$ with $x+\rho B_X\subset B_X$, and an affine mapping $\Lambda:\R^n\to Y$ such that
\begin{equation}\label{eq:use rho lower prop}
 \bigg(\fint_{x+\rho B_X} \left\|f^m(y)-\Lambda(y)\right\|_Y^Q\ud y\bigg)^{\frac{1}{Q}}\le \frac{\eta}{25T_p(Y^*)n^{\frac{n}{Q}+1}m^{\frac{1}{p}+\frac{2}{q_Y}-1}}\rho.
\end{equation}

Since $\|\cdot\|_{\ell_\infty^n}\le \|\cdot\|_X\le n\|\cdot\|_{\ell_\infty^n}$ we have $x+\frac{\rho}{n}[-1,1]^n\subset x+\rho B_X\subset x+\rho[-1,1]^n$. So, by~\eqref{eq:use rho lower prop},
\begin{equation}\label{eq:use rho lower prop on cube}
 \bigg(\fint_{x+\frac{\rho}{n}[-1,1]^n}\left\|f^m(y)-\Lambda(y)\right\|_Y^Q\ud y\bigg)^{\frac{1}{Q}}\le \frac{\eta\rho}{25T_p(Y^*)n m^{\frac{1}{p}+\frac{2}{q_Y}-1}}.
\end{equation}
We claim that~\eqref{eq:use rho lower prop on cube} implies that necessarily $\rho/n<1/2^{m-1}$. Once proven, this assertion may be contrasted with~\eqref{eq:rho lower counter example} to deduce that
\begin{equation}\label{eq:contrast radii}
\forall(m,q)\in \N\times \left[1,\frac{q_Y}{q_Y-1}\right),\qquad 2^{m-1}<n\exp\bigg(\frac{K}{\eta^q}5^{2q}T_p(Y^*)^qn^{\frac{(n+Q)q}{Q}}m^{\frac{q}{p}+\frac{2q}{q_Y}-q}\bigg).
\end{equation}
By letting $m\to\infty$ in~\eqref{eq:contrast radii} we see that $
q/p+2q/q_Y-q\ge 1$. By letting $p\to q_Y/(q_Y-1)$, we conclude that $q(q_Y-1)/q_Y+2q/q_Y-q\ge 1$, which simplifies to give the desired estimate $q_Y\le q$.

It therefore remains to prove that $\rho/n<1/2^{m-1}$. To this end, assume for the sake of obtaining a contradiction that $2\rho/n\ge 4/2^{m}$. Since $x+\rho B_X\subset B_X$ we have $\|x\|_{\ell_\infty^n}\le \|x\|_X\le (1-\rho)$. Consequently, $x_1-\rho/n,x_1+\rho/n\in [-1,1]$. Hence, by an application of~\eqref{eq:quote R lemma} with $a=x_1-\rho/n$ and $b=x_1+\rho/n$ (so that our contrapositive assumption implies that indeed $b-a\ge 4/2^m$), for every fixed $(y_2,\ldots,y_{n})\in (x_2,\ldots,x_n)+ [-\rho/n,\rho/n]^{n-1}$ we may consider the affine function $(y_1\in \R)\mapsto S^{-1}\circ P\circ \Lambda(y_1,y_2,\ldots,y_n)\in \ell_{q_Y}^m$ to deduce that
\begin{equation}\label{eq:to average Q}
\fint_{x_1-\frac{\rho}{n}}^{x_1+\frac{\rho}{n}} \left\|\phi^m(y_1)-S^{-1}\circ P\circ \Lambda(y_1,y_2,\ldots,y_n)\right\|_{\ell_{q_Y}^m}^Q\ud y_1\ge \left(\frac{\eta\rho}{nm^{\frac{1}{q_Y}}}\right)^Q.
\end{equation}
By averaging~\eqref{eq:to average Q} over $(y_2,\ldots,y_{n})\in (x_2,\ldots,x_n)+ [-\rho/n,\rho/n]^{n-1}$ , we therefore have
\begin{align}
\nonumber \frac{\eta \rho}{nm^{\frac{1}{q_Y}}}&\le \bigg( \fint_{x+\frac{\rho}{n}[-1,1]^n}\left\|\f^m(y_1)-S^{-1}\circ P\circ\Lambda(y)\right\|_{\ell_{q_Y}^m}^Q\ud y\bigg)^{\frac{1}{Q}}\\ &=2\bigg( \fint_{x+\frac{\rho}{n}[-1,1]^n}\left\|S^{-1}\circ P \big(f^m(y)-\Lambda(y)\big)\right\|_{\ell_{q_Y}^m}^Q\ud y\bigg)^{\frac{1}{Q}}\label{eq:use def fm}\\&\le 24T_p(Y^*)m^{\frac{1}{p}+\frac{1}{q_Y}-1}\bigg(\fint_{x+\frac{\rho}{n}[-1,1]^n}\left\|f^m(y)-\Lambda(y)\right\|_Y^Q\ud y\bigg)^{\frac{1}{Q}},\label{eq:use norm bound}
\end{align}
where in~\eqref{eq:use def fm} we used the definition of $f^m$ in~\eqref{eq:def fm} and the fact that $Pf^m=f^m$ (since $f^m$ takes values in the subspace $W$ and $P$ is a projection onto $W$), and in~\eqref{eq:use norm bound} we used the norm bound~\eqref{eq:norm P}.
The desired contradiction now follows by contrasting~\eqref{eq:use rho lower prop on cube} with~\eqref{eq:use norm bound}.
\end{proof}

The assumption~\eqref{eq:superreflexiv char} of Theorem~\ref{thm:superreflexive char 1} implies a local Dorronsoro inequality as in Theorem~\ref{thm:local doro}, which in turn implies an estimate of the form $r_q^{X\to Y}(\e)\ge \exp(-K/\e^q)$ as in Proposition~\ref{eq:prop cotype}. In these implications the constants deteriorate, but for the purpose of Theorem~\ref{thm:superreflexive char 1} and Proposition~\ref{eq:prop cotype} constants are not important (all that matters is that they are independent of $\e$). So, due to Proposition~\ref{eq:prop cotype} we have proven Theorem~\ref{thm:superreflexive char 1} and the statement in the paragraph that  follows Question~\ref{Q:deduce q conv}.

\section{Explicit computations for real-valued mappings on Euclidean space}\label{sec:computations}
For concreteness, below we shall fix the following normalization for the Fourier transform on $\R^n$.
$$
\forall\, f\in L_1(\R^n),\ \forall\xi\in \R^n,\qquad \hat{f}(\xi)\eqdef \frac{1}{(2\pi)^{\frac{n}{2}}}\int_{\R^n} f(x)e^{-ix\cdot\xi}\ud x.
$$
Thus, by the Plancherel theorem, every smooth compactly supported function $f:\R^n\to \R$ satisfies
\begin{equation}\label{eq:plancherel gradient}
\bigg(\int_{\R^n} |\xi|^2\cdot \left|\hat{f}(\xi)\right|^2\ud \xi\bigg)^{\frac12}=\bigg(\int_{\R^n} |\nabla f(x)|^2\ud x\bigg)^{\frac12}=\sqrt{n} \bigg(\fint_{S^{n-1}} \left\|\sigma\cdot \nabla f\right\|_{L_2(\R^n)}^2\ud \sigma \bigg)^{\frac12}.
\end{equation}
Also, $\hat{P_tf}(\xi)=e^{-t|\xi|}\hat{f}(\xi)$ and $\hat{H_tf}(\xi)=e^{-t|\xi|^2}\hat{f}(\xi)$ for every $t\in [0,\infty)$, $\xi\in \R^n$ and $f\in L_1(\R^n)$.


\medskip
\noindent{\bf The heat semigroup.}
Fix a smooth compactly supported function $f:\R^n\to \R$ and a parameter $\gamma\in (0,\infty)$. For every $t\in (0,\infty)$ and $z\in \R^n$ the Fourier transform of the function
$$
(x\in \R^n)\mapsto f(x+tz)-\Taylor_x^1(H_{\gamma t^2}f)(x+tz)=f(x+tz)-H_{\gamma t^2}f(x)-tz\cdot\nabla H_{\gamma t^2}f(x)
$$
is given by
$$
(\xi\in\R^n)\mapsto e^{itz\cdot\xi}\hat{f}(\xi)-\left(e^{-\gamma t^2|\xi|^2}\hat{f}(\xi)+it z\cdot \xi e^{-\gamma t^2|\xi|^2}\hat{f}(\xi)\right)=\left( e^{itz\cdot\xi}-(1+itz\cdot \xi)e^{-\gamma t^2|\xi|^2}\right)\hat{f}(\xi).
$$
By the Plancherel theorem we therefore have
\begin{equation}\label{eq:use plancherel heat}
\!\!\! \int_{\R^n} \left|f(x+tz)-\Taylor_x^1(H_{\gamma t^2}f)(x+tz)\right|^2 \ud x= \int_{\R^n} \left| e^{itz\cdot\xi}-(1+itz\cdot \xi)e^{-\gamma t^2|\xi|^2}\right|^2\cdot\left|\hat{f}(\xi)\right|^2\ud \xi.
\end{equation}

By rotation invariance the following identity holds true for every $\xi\in \R^n$ and $t\in (0,\infty)$.
\begin{align*}
\nonumber \fint_{B^n} \left| e^{itz\cdot\xi}-(1+itz\cdot \xi)e^{-\gamma t^2|\xi|^2}\right|^2\ud z&=\fint_{B^n} \left| e^{it|\xi|z_1}-(1+it|\xi|z_1)e^{-\gamma t^2|\xi|^2}\right|^2\ud z \\&=
\frac{|B^{n-1}|}{|B^n|}\int_{-1}^1\left| e^{it|\xi|u}-(1+it|\xi|u)e^{-\gamma t^2|\xi|^2}\right|^2(1-u^2)^{\frac{n-1}{2}}\ud u.
\end{align*}
Hence, using the change of variable $s=t|\xi|$ and a substitution of the values of $|B^{n-1}|$ and $|B^n|$,
\begin{multline}\label{eq:Gamma ratio}
\int_0^\infty\fint_{B^n} \left| e^{itz\cdot\xi}-(1+itz\cdot \xi)e^{-\gamma t^2|\xi|^2}\right|^2\ud z\frac{\ud t}{t^3}\\=\sqrt{\pi}\cdot\frac{\Gamma\left(\frac{n}{2}+1\right)}{\Gamma\left(\frac{n+1}{2}\right)}|\xi|^2\int_0^\infty\int_{-1}^1\left| e^{isu}-(1+isu)e^{-\gamma s^2}\right|^2(1-u^2)^{\frac{n-1}{2}}\ud u\frac{\ud s}{s^3}.
\end{multline}
Consequently, if we introduce the notation
\begin{align}\label{eq:def knc}
\nonumber\mathcal{k}(n,\gamma)&\eqdef n\sqrt{\pi}\cdot\frac{\Gamma\left(\frac{n}{2}+1\right)}{\Gamma\left(\frac{n+1}{2}\right)}\int_0^\infty\int_{-1}^1\left| e^{isu}-(1+isu)e^{-\gamma s^2}\right|^2(1-u^2)^{\frac{n-1}{2}}\ud u\frac{\ud s}{s^3}\\
&\asymp n^{\frac32}\int_{0}^1\int_0^\infty\left(\left( \cos(su)-e^{-\gamma s^2}\right)^2+\left( \sin(su)-sue^{-\gamma s^2}\right)^2\right)\frac{(1-u^2)^{\frac{n-1}{2}}}{s^3}\ud s\ud u,
\end{align}
then a combination of~\eqref{eq:plancherel gradient}, \eqref{eq:use plancherel heat} and~\eqref{eq:Gamma ratio} implies the validity of the following identity.
\begin{multline}\label{eq:crucial heat identity}
\bigg(\int_{\R^n}\int_0^\infty \fint_{x+tB^n}\frac{\big(f(y)-\Taylor_x^1(H_{\gamma t^2}f)(y)\big)^2}{t^{3}}\ud y \ud t\ud x\bigg)^{\frac12}\\ =  \sqrt{\frac{\mathcal{k}(n,\gamma)}{n}} \bigg(\int_{\R^n} |\xi|^2\cdot \left|\hat{f}(\xi)\right|^2\ud \xi\bigg)^{\frac12}= \sqrt{\mathcal{k}(n,\gamma)}\bigg(\fint_{S^{n-1}} \left\|\sigma\cdot \nabla f\right\|_{L_2(\R^n)}^2\ud \sigma \bigg)^{\frac12}.
\end{multline}

Despite the fact that~\eqref{eq:crucial heat identity} is stated for real-valued mappings, the corresponding identity automatically holds true for Hilbert-space valued mappings as well by an application of~\eqref{eq:crucial heat identity} to each of the coordinates with respect to an orthonormal basis, i.e.,  if $\mathcal{H}$ is a Hilbert space then for every $n\in \N$, every $\gamma\in (0,\infty)$ and every smooth compactly supported  $f:\R^n\to \mathcal{H}$ we have
\begin{multline}\label{eq:hilbert identity}
\bigg(\int_{\R^n}\int_0^\infty \fint_{x+tB^n}\left\|f(y)-\Taylor_x^1(H_{\gamma t^2}f)(y)\right\|_{\mathcal{H}}^2\ud y \frac{\ud t}{t^3}\ud x\bigg)^{\frac12}\\= \sqrt{\mathcal{k}(n,\gamma)} \bigg(\fint_{S^{n-1}} \|\sigma\cdot \nabla f\|_{L_2(\R^n;\mathcal{H})}^2\ud \sigma\bigg)^{\frac12}.
\end{multline}

The following lemma contains a (sharp) upper bound on the quantity $\mathcal{k}(n,\gamma)$.

\begin{lemma}\label{lem:k bound} For every $n\in \N$ and $\gamma\in (0,\infty)$ we have
\begin{equation}\label{eq:knc bound}
\mathcal{k}(n,\gamma)\lesssim \gamma n+\int_0^\infty v^2e^{-v^2}\log\left(2+\frac{v^2+\gamma n}{v\sqrt{\gamma n}}\right)\ud v.
\end{equation}
\end{lemma}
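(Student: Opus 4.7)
The plan is to reduce the proof to the pointwise estimate
\[
\int_0^\infty \frac{\big(\cos(su)-e^{-\gamma s^2}\big)^2+\big(\sin(su)-sue^{-\gamma s^2}\big)^2}{s^3}\,\ud s \ \lesssim\ \gamma+u^2\log\!\left(2+\frac{u}{\sqrt{\gamma}}+\frac{\sqrt{\gamma}}{u}\right),
\]
uniformly in $u\in(0,1]$ and $\gamma\in(0,\infty)$, and then integrate this against $n^{3/2}(1-u^2)^{(n-1)/2}\,\ud u$ on $[0,1]$. For the cosine piece, the observation that $(1-\cos(su))(1-e^{-\gamma s^2})\ge 0$ makes the cross term nonpositive, yielding the clean triangle inequality $(\cos(su)-e^{-\gamma s^2})^2\le (1-\cos(su))^2+(1-e^{-\gamma s^2})^2$. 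The two resulting $s$-integrals are then evaluated by the scalings $t=su$ and $t=\gamma s^2$, giving universal constants times $u^2$ and $\gamma$ respectively, both of which are absorbed by the right-hand side above.

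The sine piece is the heart of the matter because the analogous triangle inequality fails: $(\sin(su)-su)^2/s^3$ is not integrable at infinity. I would split the $s$-integral at the two natural scales $1/u$ and $1/\sqrt{\gamma}$ into three regimes. In the \emph{Taylor regime} $s\le \min(1/u,1/\sqrt{\gamma})$, the elementary bound $|\sin(x)-xe^{-y}|\lesssim |x|^3+|xy|$ for $|x|,|y|\le 1$ (proved by separately controlling $|\sin(x)-x|$ and $|x(1-e^{-y})|$) gives $(\sin(su)-sue^{-\gamma s^2})^2/s^3\lesssim s^3(u^6+u^2\gamma^2)$, whose integral up to $\min(1/u,1/\sqrt{\gamma})$ evaluates to $\lesssim u^2$ in both the cases $u\le\sqrt{\gamma}$ and $u\ge\sqrt{\gamma}$. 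In the \emph{intermediate regime} $1/u\le s\le 1/\sqrt{\gamma}$, nonempty only when $u\ge\sqrt{\gamma}$, the crude bound $|\sin(su)-sue^{-\gamma s^2}|\le 2su$ produces integrand $\lesssim u^2/s$ with integral $\lesssim u^2\log(u/\sqrt{\gamma})$; this is the source of the logarithmic factor. In the \emph{tail regime} $s\ge 1/\sqrt{\gamma}$, one uses $(\sin-sue^{-\gamma s^2})^2\le 2\sin^2(su)+2s^2u^2e^{-2\gamma s^2}$: the first term integrates to $\lesssim \gamma$ via $\sin^2\le 1$ and $\int_{1/\sqrt{\gamma}}^\infty s^{-3}\,\ud s=\gamma/2$, while the second integrates to $\lesssim u^2$ via the substitution $t=2\gamma s^2$. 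Combining the three regime bounds, using that $u/\sqrt{\gamma}+\sqrt{\gamma}/u\ge 2$ by AM-GM (so the target logarithm is bounded below by $\log 4$) and that $\log(u/\sqrt{\gamma})\le \log(2+u/\sqrt{\gamma}+\sqrt{\gamma}/u)$, yields the displayed pointwise estimate.

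To conclude, I substitute the pointwise bound into the defining formula for $\mathcal{k}(n,\gamma)$. The $\gamma$ contribution is $n^{3/2}\gamma\int_0^1(1-u^2)^{(n-1)/2}\,\ud u\asymp \gamma n$, since $\int_0^1(1-u^2)^{(n-1)/2}\,\ud u\asymp n^{-1/2}$. For the logarithmic contribution, I substitute $v=u\sqrt{n}$: then $u^2=v^2/n$, $u/\sqrt{\gamma}+\sqrt{\gamma}/u=(v^2+\gamma n)/(v\sqrt{\gamma n})$, and the $n^{3/2}$ prefactor cancels exactly against the $n^{-1}$ from $u^2$ and the $n^{-1/2}$ from $\ud u$. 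The standard bound $(1-v^2/n)^{(n-1)/2}\le e^{-cv^2}$ (for a universal $c>0$ and all $n\ge 2$) then yields exactly the target integral, with the mismatch between $e^{-cv^2}$ and $e^{-v^2}$ being absorbed into a universal multiplicative constant after the additional change of variable $v\mapsto v/\sqrt{c}$ (which shifts the logarithm by an additive constant that is controlled either by the $\gamma n$ term or by the uniform lower bound $\log 4$ on the argument of the logarithm).

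The main obstacle is the three-regime case analysis for the sine piece: extracting the correct logarithmic factor requires splitting at precisely the two scales $1/u$ and $1/\sqrt{\gamma}$ and observing that the intermediate regime, which produces the logarithm, is nonempty only when $u\ge\sqrt{\gamma}$; this asymmetry is responsible for the particular form $u/\sqrt{\gamma}+\sqrt{\gamma}/u$ appearing in the final bound. Any attempt to mimic the cosine-piece triangle inequality for the sine piece produces a divergent integral at infinity, so the splitting-and-casework is unavoidable.
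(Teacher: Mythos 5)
Your proof is correct, and the skeleton is the same as the paper's — handle cosine and sine separately, bound each $s$-integral pointwise in $u$ and $\gamma$, then integrate against $n^{3/2}(1-u^2)^{(n-1)/2}\,\ud u$ and substitute $v=u\sqrt{n}$ — but the decomposition of the sine piece is genuinely different and worth noting. The paper splits the $s$-integral at the single scale $1/u$; in the regime $s\le 1/u$ it uses $|\sin(a)-ae^{-b}|\asymp a^2+a\min\{1,b\}$ (valid for $a\in[0,1]$ but arbitrary $b\ge 0$) to get $\lesssim u^2\log(2+\sqrt{\gamma}/u)$, and in the regime $s\ge 1/u$ it uses $|\sin(a)-ae^{-b}|\le 1+ae^{-b}$ to get $\lesssim u^2\log(2+u/\sqrt{\gamma})$; the two logs are then merged via $\log\bigl[(2+a)(2+1/a)\bigr]\asymp\log\bigl(2+(a^2+1)/a\bigr)$. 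You instead split at the two scales $\min(1/u,1/\sqrt{\gamma})$ and $1/\sqrt{\gamma}$, so that the logarithm is produced entirely by the intermediate regime $[1/u,1/\sqrt{\gamma}]$ (nonempty only when $u\ge\sqrt{\gamma}$), while the Taylor and tail regimes contribute only $u^2$ and $\gamma+u^2$. Your three-regime split is conceptually a bit cleaner — it isolates the logarithm to the single regime where $su\ge1$ but $\gamma s^2\le1$ — whereas the paper's two-regime split is slightly tighter (its pointwise bound has no stray $\gamma$, so it recovers the result with $1+\gamma n$ rather than a constant times $\gamma n$) and produces the symmetric form $\log(2+(u^2+\gamma)/(u\sqrt{\gamma}))$ naturally rather than as a post-hoc majorization. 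Both are perfectly sound. One caveat: your $(1-v^2/n)^{(n-1)/2}\le e^{-cv^2}$ step, as you note, requires $n\ge 2$; for $n=1$ the weight is just $\1_{[0,1]}(v)$ and one bounds $\int_0^1 v^2\log(2+\cdots)\,\ud v\le e\int_0^\infty v^2e^{-v^2}\log(2+\cdots)\,\ud v$ directly (using $e^{-v^2}\ge e^{-1}$ on $[0,1]$); the paper handles $n=1$ separately in the same spirit, and you should add a sentence doing so.
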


Prior to proving Lemma~\ref{lem:k bound}, we record the following corollary (corresponding to a substitution of the special case $\gamma=1/n$ of Lemma~\ref{lem:k bound} into~\eqref{eq:hilbert identity})  that was already stated in the Introduction, where it was noted that it implies the improved estimate~\eqref{eq:r2 hilbert} on the modulus of $L_2$ affine approximation.

\begin{corollary} Suppose that $\mathcal{H}$ is a Hilbert space. Then for every $n\in \N$, every smooth compactly supported function $f:\R^n\to \mathcal{H}$ satisfies
$$
\bigg(\int_{\R^n}\int_0^\infty \fint_{x+tB^n}\left\|f(y)-\Taylor_x^1\Big(H_{\frac{t^2}{n}}f\Big)(y)\right\|_{\mathcal{H}}^2\ud y \frac{\ud t}{t^3}\ud x\bigg)^{\frac12}\lesssim \bigg(\fint_{S^{n-1}} \|\sigma\cdot \nabla f\|_{L_2(\R^n;\mathcal{H})}^2\ud \sigma\bigg)^{\frac12}.
$$
\end{corollary}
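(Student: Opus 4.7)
The plan is to specialize identity~\eqref{eq:hilbert identity} to the time parameter $\gamma=1/n$ and then invoke the explicit bound on $\mathcal{k}(n,\gamma)$ provided by Lemma~\ref{lem:k bound}. This reduces the corollary to a purely scalar numerical estimate: namely, it suffices to show that $\mathcal{k}(n,1/n)\lesssim 1$ uniformly in $n\in\N$, since then the identity~\eqref{eq:hilbert identity} produces exactly the inequality we want.

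Concretely, I would first note that substituting $\gamma=1/n$ into Lemma~\ref{lem:k bound} yields
\begin{equation*}
\mathcal{k}(n,1/n)\lesssim 1+\int_0^\infty v^2 e^{-v^2}\log\!\left(2+\frac{v^2+1}{v}\right)\!\ud v,
\end{equation*}
with the right-hand side independent of $n$. The remaining task is therefore to verify that this last integral is a finite universal constant. Near $v=0$ the integrand behaves like $v^2\log(1/v)$, which is integrable; for $v\to\infty$ the logarithmic factor grows only like $\log v$, while the Gaussian factor $v^2 e^{-v^2}$ decays super-polynomially, so the tail is integrable as well. Hence the integral is $O(1)$ and $\mathcal{k}(n,1/n)\lesssim 1$.

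With this bound in hand, I plug $\gamma=1/n$ into~\eqref{eq:hilbert identity} (which is stated for Hilbert-space valued mappings and proved by decomposing along an orthonormal basis and applying the scalar identity~\eqref{eq:crucial heat identity} coordinatewise). The left-hand side becomes exactly the quantity appearing in the corollary, while the right-hand side becomes $\sqrt{\mathcal{k}(n,1/n)}$ times the averaged directional $L_2$ norm of $\nabla f$, and the first factor is $O(1)$ by the previous step. This yields the stated dimension-free inequality.

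The real content here lies in Lemma~\ref{lem:k bound} and in the exact cancellations captured by the Plancherel-based identity~\eqref{eq:crucial heat identity}; no additional obstacle arises at the level of the corollary itself. The only mild subtlety in the plan is that the choice $\gamma=1/n$ is dictated by the bound in Lemma~\ref{lem:k bound}: the term $\gamma n$ forces $\gamma\lesssim 1/n$, while a significantly smaller $\gamma$ would make the logarithmic integrand blow up through the factor $1/\sqrt{\gamma n}$ inside the logarithm. Thus $\gamma=1/n$ is essentially the unique scaling (up to constants) for which both terms in~\eqref{eq:knc bound} are simultaneously $O(1)$, which is precisely why this particular time parameter is selected in the statement.
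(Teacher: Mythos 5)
Your proposal is correct and matches the paper's own derivation: the corollary is obtained exactly by substituting $\gamma=1/n$ into Lemma~\ref{lem:k bound} to get $\mathcal{k}(n,1/n)\lesssim 1$ and then plugging this into the Hilbertian identity~\eqref{eq:hilbert identity}. Your extra check that the remaining $v$-integral is a finite universal constant, and your remark on why $\gamma\asymp 1/n$ is the natural scaling, are both accurate and consistent with the paper's intent.
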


\begin{proof}[Proof of Lemma~\ref{lem:k bound}] We shall estimate  the two integrals that correspond to each of the summands that appear in the right hand side of~\eqref{eq:def knc}  separately. Firstly, consider the elementary estimate
$$
\forall\, a,b\in [0,\infty),\qquad \left|\cos(a)-e^{-b}\right|\le |\cos(a)-1|+\left|1-e^{-b}\right|\asymp \min\{a^2,1\}+\min\{b,1\},
$$
which implies that for every $u\in (0,\infty)$ we have
$$
\int_0^\infty\left( \cos(su)-e^{-\gamma s^2}\right)^2\frac{\ud s}{s^3}\lesssim u^4\int_0^{\frac{1}{u}} s\ud u+\int_{\frac{1}{u}}^\infty\frac{\ud s}{s^3}+\gamma^2\int_0^{\frac{1}{\sqrt{\gamma}}}s\ud s+\int_{\frac{1}{\sqrt{\gamma}}}^\infty \frac{\ud s}{s^3}\asymp u^2+\gamma.
$$
Therefore,
\begin{equation}\label{eq:first u int}
n^{\frac32}\int_{0}^1\int_0^\infty\left( \cos(su)-e^{-\gamma s^2}\right)^2\frac{(1-u^2)^{\frac{n-1}{2}}}{s^3}\ud s\ud u\lesssim n^{\frac32}\int_0^1 (u^2+\gamma)e^{-\frac{n-1}{2}u^2}\ud u\asymp 1+\gamma n.
\end{equation}
Secondly, consider the elementary estimate
$$
\forall (a,b)\in [0,1]\times [0,\infty),\qquad \left|\sin(a)-ae^{-b}\right|\le |\sin(a)-a|+a\left|1-e^{-b}\right|\asymp a^2+a\min\{1,b\},
$$
which implies that for every $u\in (0,1]$ we have
\begin{multline*}
\int_0^{\frac{1}{u}}\left( \sin(su)-sue^{-\gamma s^2}\right)^2\frac{\ud s}{s^3}\lesssim u^4\int_0^{\frac{1}{u}}s\ud s+u^2\int_0^{\frac{1}{u}} \frac{\min\{1,\gamma^2s^4\}}{s}\ud s\\\asymp u^2+\min\left\{\frac{\gamma^2}{u^2},u^2\right\}+u^2\log\left(\max\left\{1,\frac{\sqrt{\gamma}}{u}\right\}\right)\asymp u^2\log\left(2+\frac{\sqrt{\gamma}}{u}\right).
\end{multline*}
Consequently, if $n\ge 2$ then using the elementary inequality $(1-u^2)^{(n-1)/2}\le e^{-nu^2/4}$ we see that
\begin{multline}\label{eq:up to 1/u}
n^{\frac32}\int_0^1\int_0^{\frac{1}{u}}\left( \sin(su)-sue^{-\gamma s^2}\right)^2\frac{(1-u^2)^{\frac{n-1}{2}}}{s^3}\ud s\ud u\lesssim n^{\frac32}\int_0^1 u^2e^{-\frac{nu^2}{4}}\log\left(2+\frac{\sqrt{\gamma}}{u}\right)\ud u\\
= 8\int_0^{\frac{\sqrt{n}}{2}} v^2e^{-v^2}\log\left(2+\frac{\sqrt{\gamma n}}{2v}\right)\ud v\lesssim \int_0^{\infty}
v^2e^{-v^2}\log\left(2+\frac{\sqrt{\gamma n}}{v}\right)\ud v.
\end{multline}
When $n=1$ the leftmost term in~\eqref{eq:up to 1/u} is bounded from above by a universal constant, and therefore it is bounded above by a constant multiple of the rightmost term in~\eqref{eq:up to 1/u} in the case $n=1$ as well.

In a similar fashion, consider the elementary estimate
$$
\forall (a,b)\in [1,\infty)\times [0,\infty),\qquad \left|\sin(a)-ae^{-b}\right|\le |\sin(a)|+ae^{-b}\le 1+ae^{-b},
$$
which implies that for every $u\in (0,1]$ we have
\begin{multline*}
\int_{\frac{1}{u}}^\infty\left( \sin(su)-sue^{-\gamma s^2}\right)^2\frac{\ud s}{s^3}\lesssim \int_{\frac{1}{u}}^\infty\frac{\ud s}{s^3}+ u^2\int_{\frac{1}{u}}^\infty\frac{e^{-2\gamma s^2}}{s}\ud s\\\asymp u^2+u^2\int_{\frac{\sqrt{2\gamma}}{u}}^\infty\frac{e^{-t^2}}{t}\ud t\lesssim u^2+u^2\int_{\frac{\sqrt{2\gamma}}{u}}^{\max\left\{\frac{\sqrt{2\gamma}}{u},1\right\}}\frac{\ud t}{t}+u^2\int_1^\infty e^{-t^2}\ud t\lesssim u^2\log\left(2+\frac{u}{\sqrt{\gamma}}\right).
\end{multline*}
By integrating this inequality with respect to $u$, when $n\ge 2$ we therefore have
\begin{multline}\label{eq:from 1/u}
n^{\frac32}\int_0^1\int_{\frac{1}{u}}^\infty\left( \sin(su)-sue^{-\gamma s^2}\right)^2\frac{(1-u^2)^{\frac{n-1}{2}}}{s^3}\ud s\ud u\lesssim n^{\frac32}\int_0^1 u^2e^{-\frac{nu^2}{4}}\log\left(2+\frac{u}{\sqrt{\gamma}}\right)\ud u\\
=8\int_0^{\frac{\sqrt{n}}{2}}v^2e^{-v^2}\log\left(2+\frac{2v}{\sqrt{\gamma n}}\right)\ud v \lesssim \int_0^\infty v^2e^{-v^2}\log\left(2+\frac{v}{\sqrt{\gamma n}}\right)\ud v.
\end{multline}
As before, the leftmost term in~\eqref{eq:from 1/u} is bounded by a universal constant multiple of the rightmost term of~\eqref{eq:from 1/u} in the case $n=1$ as well.

By summing~\eqref{eq:up to 1/u} and~\eqref{eq:from 1/u} while using the fact that $\log[(2+a)(2+1/a)]\asymp\log (2+(a^2+1)/a)$ for every $a\in (0,\infty)$, we conclude that
\begin{equation}\label{eq:sinus integral}
n^{\frac32}\int_0^1\int_{0}^\infty\left( \sin(su)-sue^{-\gamma s^2}\right)^2\frac{(1-u^2)^{\frac{n-1}{2}}}{s^3}\ud s\ud u\lesssim \int_0^\infty v^2e^{-v^2}\log\left(2+\frac{v^2+\gamma n}{v\sqrt{\gamma n}}\right)\ud v.
\end{equation}
Recalling~\eqref{eq:def knc}, the desired estimate~\eqref{eq:knc bound} now follows from~\eqref{eq:first u int} and~\eqref{eq:sinus integral}.
\end{proof}

\medskip
\noindent{\bf  The Poisson semigroup.}
Fix $\gamma\in (0,\infty)$ and a nonconstant smooth compactly supported function $f:\R^n\to \R$. Arguing analogously to~\eqref{eq:use plancherel heat}, by the Plancherel theorem we have
\begin{equation*}
\!\!\! \int_{\R^n} \left|f(x+tz)-\Taylor_x^1(P_{\gamma t}f)(x+tz)\right|^2 \ud x= \int_{\R^n} \left| e^{itz\cdot\xi}-(1+itz\cdot \xi)e^{-\gamma t|\xi|}\right|^2\cdot\left|\hat{f}(\xi)\right|^2\ud \xi.
\end{equation*}
From here, the same reasoning that led to the identity~\eqref{eq:crucial heat identity} shows that
\begin{multline}\label{eq:poisson bad}
\int_{\R^n}\int_0^\infty \fint_{x+tB^n}\frac{\big(f(y)-\Taylor_x^1(H_{\gamma t^2}f)(y)\big)^2}{t^{3}}\ud y \ud t\ud x\\ =   c_n\bigg(\int_0^\infty\int_{-1}^1\left| e^{isu}-(1+isu)e^{-\gamma s}\right|^2(1-u^2)^{\frac{n-1}{2}}\ud u\frac{\ud s}{s^3}\bigg)\fint_{S^{n-1}} \left\|\sigma\cdot \nabla f\right\|_{L_2(\R^n)}^2\ud \sigma,
\end{multline}
where $c_n=n\sqrt{\pi}\Gamma(1+n/2)/\Gamma((n+1)/2)$. But, for fixed $u\in (-1,1)$ when $s\to 0$ the integrand of the first integral in the right hand side of~\eqref{eq:poisson bad} is asymptotic to $(1-u^2)^{(n-1)/2}\gamma^2/s$. So, the first integral in the right hand side of~\eqref{eq:poisson bad} diverges, implying that the left hand side of~\eqref{eq:poisson bad} is infinite.

\bigskip
\noindent{\bf Acknowledgements.} We are grateful to Apostolos Giannopoulos and Gilles Pisier for providing helpful pointers to the literature.

\bibliography{heatSemigr}
\bibliographystyle{abbrv}

\end{document}